\newtheorem{lemma}{Lemma}[section]
\newtheorem{theorem}[lemma]{Theorem}
\newtheorem{corollary}[lemma]{Corollary}
\theoremstyle{definition}
\newtheorem{definition}[lemma]{Definition}
\newtheorem{remark}[lemma]{Remark}
\newcommand{\Z}{\mathbb{Z}}
\newcommand{\R}{\mathbb{R}}
\newcommand{\Q}{\mathbb{Q}}
\newcommand{\C}{\mathbb{C}}
\newcommand{\N}{\mathbb{N}}
\newcommand{\Sk}{\textrm{Sk}}
\newcommand{\ra}{\longrightarrow}
\title[Finiteness of KBSM of $3$-manifolds]{An effective proof of finiteness for Kauffman bracket skein modules}
\author{Giulio Belletti}
\address{
	IRMP, UC Louvain, Chemin du Cyclotron 2, bte L7.01.02, 1348 Louvain-la-Neuve, Belgium}
\email{gbelletti451@gmail.com}
\author{Renaud Detcherry}
\date{} 
\address{Institut de Mathématiques de Bourgogne, UMR 5584 CNRS, Université Bourgogne Franche-Comté, F-2100 Dijon, France}
\email{renaud.detcherry@u-bourgogne.fr}
\date{} 
\begin{document}
\maketitle
	\begin{abstract} We prove a version of the finiteness conjecture for Kauffman bracket skein modules of $3$-manifolds with boundary, which was introduced by the second author in \cite{Det21}. In particular, our methods, which are constructive, give an alternative proof of Witten's finiteness conjecture for the Kauffman bracket skein modules of closed $3$-manifolds, which was originally proved in \cite{GJS19}. Moreover, as a corollary we show that the peripheral ideal of any link is non-empty, answering a question of Frohman, Gelca and Lofaro \cite{FGL02}. 
\end{abstract}
\tableofcontents
\section{Introduction}
\label{sec:intro}
\subsection{Context and statements of the main results}
\label{sec:intro-mainresult}
Kauffman bracket skein modules of compact oriented $3$-manifolds were defined independently by Przytycki \cite{Prz1} and by Turaev \cite{Tur} to generalize the Jones polynomial of links in $S^3$ to links in any $3$-manifold. Kauffman bracket skein modules and their generalizations have been a central object in quantum topology, appearing in many of its facets. However, until fairly recent, the computation of Kauffman bracket skein modules of $3$-manifold was seen as intractable in all but the most simple examples of $3$-manifolds, as they have often infinitely generated \cite{HP95}, and sometimes not even a sum of finitely generated $\Z[A^{\pm 1}]$-modules \cite{Mro11}, \cite{BKSW}.

Kauffman bracket skein modules may be defined over various choices of rings of coefficients. We will denote by $Sk(M)$ the Kauffman bracket skein module over $\Q(A)$ coefficients, and by $Sk(M,\Z[A^{\pm 1}])$ the Kauffman bracket skein module with $\Z[A^{\pm 1}]$ coefficients (see Definition \ref{dfn:KBSMZ} and \ref{dfn:KBSM} for details).

In this context, the finiteness conjecture of Witten (first in print in \cite{Car}) stated that skein modules of closed oriented $3$-manifold are finite dimensional over $\Q(A)$. 

The finiteness conjecture was later proved by Gunningham, Jordan and Safronov, by a stunning proof involving an array of ideas coming from factorization homology, quantum groups and holonomic $D$-modules. We describe succintly their proof, which is non-constructive, in Section \ref{sec:intro-outline}.

In \cite{Det21}, the second author introduced several versions of the finiteness conjecture for compact oriented $3$-manifolds with boundary. The weakest version of the conjecture, positing that the skein module (with $\Q(A)$ coefficients) of a manifold is finite dimensional as a module over the skein module of its boundary, has since been proven for some Seifert manifolds \cite{AF22} and for some families of genus $2$ and $3$ manifolds \cite{KW23}. The strongest version of the conjecture had been already known at the time for torus knots \cite{Mar10} and for $2$-bridge knots and links \cite{Le06} \cite{LT14}.

The purpose of this paper is to give a new constructive proof of the finiteness theorem for Kauffman bracket skein modules of closed $3$-manifolds, and to prove an extension to compact oriented $3$-manifolds with boundary. To precisely state our result, which is similar in spirit to \cite[Conjecture 3.3]{Det21}, we need the following definition:
\begin{definition}
	Let $M$ be a compact manifold with boundary $\partial M$, and let $\overline{\lambda}:=(\lambda_1,\dots,\lambda_n)$ be disjoint curves in $\partial M$. Then $\Q(A)[\overline{\lambda}]$ forms a commutative subring of $\Sk(\partial M)$; denote with $\Q(A,\overline{\lambda})$ the field of fractions of this subring. Then the \emph{localized skein module} of $M$ with respect to the curves $\lambda_1,\dots,\lambda_n$, denoted with $\Sk^{\overline{\lambda}}(M)$, is the module $\Sk(M,\Q(A))\otimes_{\Q(A)[\overline{\lambda}]}\Q(A,\overline{\lambda})$.
\end{definition}

For $\Sigma$ a closed compact oriented surface, we say that a family $\mathcal{C}$ of simple closed curves on $\Sigma$ is a \textit{pants decomposition} of $\Sigma$ is $\mathcal{C}$ is a maximal family of non-trivial, disjoint, pairwise non-parallel simple closed curves on $\Sigma.$ If $\Sigma$ has no component that has genus $0$ or $1,$ then this matches the usual notion of pants decomposition since $\mathcal{C}$ will then decompose $\Sigma$ into pairs of pants. However, for each component $S$ of $\Sigma$ of genus $1,$ a pants decomposition $\mathcal{C}$ of $\Sigma$ contains a single non-trivial simple closed curve in $S,$ and for each component $S$ of genus $0,$ the family $\mathcal{C}$ contains no simple closed curve in $S.$

With this definition, we can state our main theorem: 

\begin{theorem}\label{thm:main}
	Let $M$ be a compact manifold with (possibly empty) boundary $\partial M$, and let $\overline{\lambda}:=(\lambda_1,\dots,\lambda_n)$ be a pants decomposition of $\partial M.$ Then, $\Sk^{\overline{\lambda}}(M)$ is a finitely generated $\Q(A,\lambda_1,\dots,\lambda_n)$-module.
\end{theorem}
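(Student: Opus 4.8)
The plan is to proceed by induction on the complexity of $M$, using a Heegaard-type or handle-decomposition argument combined with the fact that the skein algebra of a surface, after localizing at a pants decomposition, becomes a very small (essentially finite-rank) object. Concretely, I would first treat the case of a handlebody: if $H$ is a handlebody with $\partial H$ a genus $g$ surface and $\overline{\lambda}$ a pants decomposition of $\partial H$, then $\Sk(H)$ is the skein module of the handlebody, which is freely generated by multicurves, and I would show that after tensoring with $\Q(A,\overline{\lambda})$ the module $\Sk^{\overline{\lambda}}(H)$ is finitely generated — indeed generated by multicurves whose geometric intersection number with the pants curves is bounded. The key mechanism is that in the Kauffman bracket skein algebra of a surface, the product of a curve $\lambda_i$ in the pants decomposition with any multicurve $c$ can be re-expressed, via skein relations, so that curves with high intersection number with $\lambda_i$ are written in terms of curves with lower intersection number plus correction terms involving powers of $\lambda_i$ (a Frohman–Gelca type product-to-sum / recursion formula); once $\lambda_i$ is invertible, this lets us reduce the intersection number, and after doing this for all $\lambda_i$ simultaneously we are left with finitely many generators.

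Next I would handle a general compact oriented $M$ by choosing a Heegaard splitting $M = H_1 \cup_\Sigma H_2$ adapted to the boundary (when $\partial M \neq \emptyset$, a splitting along a surface $\Sigma$ containing $\partial M$, or equivalently a handle decomposition of $M$ relative to $\partial M$). The skein module $\Sk(M)$ is then a quotient of $\Sk(H_1)$ (or of $\Sk(\Sigma)$) by the relations imposed by the handles of $H_2$: sliding curves over the $2$-handles of $H_2$. Using the handlebody case, $\Sk^{\overline\lambda}(H_1)$ is finitely generated over $\Q(A,\overline\lambda)$; since $\Sk^{\overline\lambda}(M)$ is a quotient of it (localization is exact, and the boundary curves $\overline\lambda$ survive in $M$ because they lie on $\partial M$), it is a fortiori finitely generated.

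The main obstacle — and where the real work lies — is the closed case, or more precisely controlling the quotient by the $2$-handle relations well enough to see finiteness without an a priori bound. For $\partial M \neq \emptyset$ the argument above is essentially automatic once the handlebody case is done, because we only ever quotient. But when $\partial M = \emptyset$ there is no pants decomposition to localize at, so the statement reduces to plain finite-dimensionality of $\Sk(M)$ over $\Q(A)$; here one must cap off a handlebody $H$ with a second handlebody and show that the $2$-handle slides force all but finitely many multicurve classes to vanish. I expect this to require a careful filtration argument: filter $\Sk(H)$ by intersection number with the co-cores, show the slide relations are "triangular" with respect to this filtration with invertible leading coefficients (again using the product-to-sum structure of the skein algebra of the Heegaard surface and the hypothesis that pants curves become invertible), and deduce that the associated graded — hence the module itself — is finitely generated. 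Making the leading-term analysis genuinely \emph{effective}, i.e. extracting an explicit bound on the number of generators, is the delicate point, and is presumably where the paper's technical heart (an explicit recursion in the skein algebra of a pair of pants, glued along the pants decomposition) comes in.
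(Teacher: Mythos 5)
There is a genuine gap, and it sits exactly where you place the weight of the argument: the handlebody step. Multiplying a multicurve $c$ by a pants curve $\lambda_i$ with $I(c,\lambda_i)=n$ gives, after resolving crossings, $\lambda_i\cdot c=A^{-n}c_++A^{n}c_-+(\text{terms of intersection number}\le n-2)$, where $c_\pm$ are the $\pm 1/n$-fractional Dehn twists of $c$ along $\lambda_i$ and still satisfy $I(c_\pm,\lambda_i)=n$. So inverting $\lambda_i$ lets you solve for $c_+$ in terms of $c,c_-$ and lower-order terms: this is a recursion in the \emph{twist} coordinate along $\lambda_i$, not in the intersection coordinate. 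What it proves is that $\Sk(H)$ is generated over $\Q(A)[\overline{\lambda}]$ by multicurves whose twist coordinates lie in $\{0,1\}$ --- a set still indexed by the $3g-3$ unbounded intersection coordinates, hence infinite. (This is precisely the content of Lemma \ref{lem:dimPoly} in the paper, which yields only a polynomial bound $O(n^{3g-3})$ on the filtration by intersection number, not finiteness.) To kill the intersection coordinates you need relations that \emph{shift} them, and these do not come from the pants curves themselves; the paper manufactures them by a pigeonhole argument (Lemma \ref{lem:linDep}): monomials in any $3g-2$ chosen curves grow like $n^{3g-2}$ while the localized skein module of the compression body filtered by intersection number grows like $O(n^{3g-3})$, forcing polynomial relations that, via the curve-operator action on the colored-spine basis $\varphi_c$ of $\Sk(H)$, become linear recurrences in the color lattice. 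Even then one must handle the loci where the leading coefficients of these recurrences vanish and where the recurrences hit the boundary of the admissibility polytope, which is the long induction on codimension occupying Sections \ref{sec:induction1}--\ref{sec:finalproof}. None of this is replaced by the product-to-sum mechanism you describe.

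Two further steps also do not go through as stated. First, the reduction of the bounded case to the handlebody case by ``we only ever quotient'': the curves $\overline{\lambda}$ live on $\partial M$, and after transporting them through the compression body they account for only \emph{part} of a pants decomposition of the Heegaard surface $\partial H_1$ (the remaining curves bound disks). So $\Sk^{\overline{\lambda}}(M)$ is a quotient of a \emph{partial} localization of $\Sk(H_1)$, at fewer than $3g-3$ curves, and finite generation of the full localization (your handlebody case) says nothing about the partial one; the $2$-handle relations of the compression body are doing essential work here, not just the localization. Second, in the closed case nothing is localized at all, so the invertibility mechanism you invoke throughout is unavailable; you correctly flag this as the technical heart, but the proposal offers no substitute for it. As written, the argument establishes neither the handlebody case nor the general one.
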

\begin{remark}
	When $M$ is closed, the pants decomposition $\overline{\lambda}$ is empty and $Sk^{\overline{\lambda}}(M)$ is simply $Sk(M,\Q(A)).$ Hence, Witten's finiteness conjecture for Kauffman bracket skein modules of closed $3$-manifolds is a special case of Theorem \ref{thm:main}, and we recover the result of \cite{GJS19}. 
\end{remark}
\begin{remark}
	It might appear surprising that Theorem \ref{thm:main} holds with such a weak condition on the set of curves $\lambda_1,\dots,\lambda_n.$ We can get away with such a weak condition because of the localization we considered here, in constrast to Conjecture 3.3 of \cite{Det21}. For instance, we claim that if $\overline{\lambda}$ contains a curve $\lambda_1$ that bounds a disk in $M,$ then $Sk^{\overline{\lambda}}(M)=0.$ Meanwhile, $Sk(D^2\times S^1,\Q(A))$ is for example not finitely generated as a $\Q(A)[m]$-module, where $m$ is the meridian of $D^2\times S^1.$
\end{remark}

While writing this article, the authors were informed that David Jordan and Iordanis Romaidis have a proof of \cite[Conjecture 3.1]{Det21} which uses techniques similar to \cite{GJS19} and which will appear in \cite{JR25}; this conjecture states that for any compact oriented $3$-manifold $M$, $Sk(M)$ is a finitely generated module over $Sk(\partial M)$.

\subsection{Outline of the proof and comparison with the work of \cite{GJS19}}
\label{sec:intro-outline}
As was mentioned above, Theorem \ref{thm:main} recovers as a special case the finite dimensionality of Kauffman bracket skein of $3$-manifolds over $\Q(A),$ which was known as Witten's finiteness conjecture and was first proved by Gunningham, Jordan and Safronov in \cite{GJS19}. We stress that our methods of proof differ greatly from the methods in \cite{GJS19}. To describe their proof succintly, for a closed $3$-manifold $M$ they introduced a new version of the skein module, $Sk^{int}(M),$ which they called the \textit{internal skein module}, which has a natural $U_qsl_2$-action, and such that the invariant part recovers the ordinary skein module $Sk(M).$ They prove that the internal skein algebras of surfaces and the internal skein modules of handlebodies are deformation quantization of their representations variety in $SL_2(\C),$ which allows them, by considering a Heegaard splitting $M=H\underset{\Sigma}{\cup} \overline{H'},$ to relate $Sk^{int}(M)$ to a tensor product of two holonomic $D$-modules, from which the finiteness theorem can be deduced by a theorem of Kashiwara and Schapira.

In constrast, while our proof starts by considering a Heegaard splitting of $M$ (into a handlebody and compression body in our case), this is the only ingredient our proofs have in common. Not only do we work directly with the skein modules and not with the internal (or stated) version, but we do not use at all the relationship between Kauffman bracket skein modules and $SL_2(\C)$-character varieties. Instead, given a Heegaard splitting
$$M=\overline{H}\underset{\Sigma}{\cup}C,$$
our starting point is that the Kauffman bracket skein module $Sk(M)$ is spanned by links in $H.$ Skein modules of handlebodies admit some nice basis $(\varphi_c)$ over $\Q(A),$ parametrized by \textit{admissible colorings} $c\in \Delta$ of a spine of the handlebody, where $\Delta$ is the intersection of an explicit polytope in $\R^{3g-3}$ with a lattice. Moreover, in Section \ref{sec:filtration} we prove a key topological lemma (Lemma \ref{lem:linDep}), which allows us to produce elements of $Sk(\Sigma)$ of a specific kind that vanishes in $Sk^{\overline{\lambda}}(C).$
Then, in Section \ref{sec:curvop} we use the description of the action of $Sk(\Sigma)$ on $Sk(H),$ which was studied in details by the second author and Santharoubane in \cite{DS25}, to provide some linear dependence relations between the vectors $\varphi_c,$ viewed as elements of $Sk^{\overline{\lambda}}(M).$ 
For instance, we get linear recurrence relations of the form
$$R_k(A,A^{c_1},\ldots,A^{c_{3g-3}})\varphi_{c+k\delta_i}+ \ldots + R_{-k}(A,A^{c_1},\ldots,A^{c_{3g-3}})\varphi_{c-k\delta_i}=0,$$
where $R_i$ are polynomials and $\delta_i$ is the vector with $i$-th coordinate $1$ and other coordinates $0.$ 

In Sections \ref{sec:induction1} and \ref{sec:induction2}, we prove that those recurrence relations are sufficient to prove finite dimensionality of $Sk^{\overline{\lambda}}(M).$ We proceed by induction, proving first that the span of all $\varphi_c$ for $c\in \Delta \setminus \underset{i\in I}{\cup} V_i$ is finite dimensional, for some finite family of subspaces $V_i$ of codimension at least $1,$ then proceed to increase the codimension of subspaces in the family $(V_i)_{i\in I}.$ 

The most technical part of our argument is contained in Section \ref{sec:finalproof}. Because some of our recurrence relations break down when $c$ approaches the boundary of the polytope $\Delta,$ we are forced to analyze the boundary of the polytope $\Delta,$  and find recurrence relations adapted to each facet of the polytope; this will be done in Section \ref{sec:polytope-boundary}.

\subsection{Corollaries of our proof of the main theorem}
\label{sec:intro-corollaries}
One remarkable feature of \cite{GJS19} is that it proves Witten's finiteness conjecture not only for the Kauffman bracket skein modules, but for $G$-skein modules of closed manifolds, for any complex reductive group $G.$ As explained in \cite[Section 2.3 and 3.1]{GJS19}, to any ribbon category $\mathcal{C}$ and any $3$-manifold, one can associate a skein module $Sk_{\mathcal{C}}(M);$ in this setting, the $G$-skein module $Sk_G(M)$ is obtained for the choice $\mathcal{C}=Rep_q(G),$ which is a ribbon category associated to $G,$ originally defined by Lustzig \cite[Chapter 32]{Lust:book}, and the Kauffman bracket skein module is a particular case of this construction for $G=SL_2.$

In this paper, we restrict ourselves to the case of the Kauffman bracket skein module, though the spirit of our proof may be applicable to $G$-skein modules more generally. Indeed, a basis of $Sk_G(H)$ for a handlebody $H$ may again be described in terms of colorings of a spine by elements of a certain polytope, and the natural action of $Sk_G(\Sigma)$ on $Sk_G(H)$ should have a very similar form.
However, some of the most technical parts of our proof require a delicate analysis of the admissibility polytope, which would require a lot more complicate combinatorics for general $G.$ We may investigate this in subsequent work.

Now, our methods have some advantage compared to \cite{GJS19}: indeed they are more explicit. The finiteness of skein modules in \cite{GJS19} in the end stems from applying an abstract theorem about tensor product of $D$-modules; in particular it is not constructive. In constrast, with our methods, we get:

\begin{corollary}
	\label{cor:explicit-genset} Let $M$ be a closed $3$-manifold, given by a Heegaard splitting $M=\overline{H}\underset{\Sigma}{\cup}H'.$ Then there exists an algorithm which produces an explicit finite generating set for $Sk(M).$
\end{corollary}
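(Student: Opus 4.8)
The plan is to extract the algorithm directly from the constructive nature of the proof of Theorem \ref{thm:main}, specialized to the closed case where $\overline{\lambda}$ is empty and $\Sk^{\overline{\lambda}}(M) = Sk(M)$. First I would fix the Heegaard splitting $M = \overline{H}\underset{\Sigma}{\cup}H'$ and recall that $Sk(M)$ is spanned by the image of the basis $(\varphi_c)_{c\in\Delta}$ of $Sk(H)$, where $\Delta$ is the explicit lattice polytope of admissible colorings of a fixed spine of $H$. The key point is that every step in the proof is algorithmic: the topological Lemma \ref{lem:linDep} produces, by an explicit procedure, elements of $Sk(\Sigma)$ that vanish in $Sk(H')$; the formulas from \cite{DS25} for the action of $Sk(\Sigma)$ on $Sk(H)$ are explicit, so translating these vanishing elements via the action yields, by a finite computation, the recurrence relations $\sum_k R_k(A, A^{c_1},\dots,A^{c_{3g-3}})\varphi_{c+k\delta_i} = 0$ with $R_k$ explicit polynomials. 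Thus one can effectively list a (finite, for each generator of the relevant ideal) set of such recurrences.

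Next I would observe that the inductive argument of Sections \ref{sec:induction1}, \ref{sec:induction2}, and \ref{sec:finalproof} is itself an effective procedure. Starting from the recurrences, one shows the span of $\{\varphi_c : c \in \Delta\setminus\bigcup_{i\in I}V_i\}$ is finitely generated for an explicit finite family of positive-codimension affine subspaces $V_i$; then one increases the codimension, each step reducing to finitely many lower-dimensional sub-polytopes on which the boundary-adapted recurrences of Section \ref{sec:polytope-boundary} apply. Since at every stage the relevant sub-polytopes, the finitely many ``bad'' subspaces, and the recurrences attached to each facet are all produced explicitly, and the induction terminates (the dimension strictly drops), one obtains after finitely many steps an explicit finite subset $F \subset \Delta$ such that $\{\varphi_c : c \in F\}$ spans $Sk(M)$. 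The algorithm outputs this set $\{\varphi_c : c \in F\}$, which is an explicit finite generating set.

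The main obstacle — really a bookkeeping obstacle rather than a conceptual one — is to verify that none of the steps hides an implicit non-effective choice: in particular that Lemma \ref{lem:linDep} can be made algorithmic (the curves realizing the required topological relations in the compression body, here the handlebody $H'$, must be producible from a finite presentation of the splitting), that solving the polynomial recurrences to deduce finite-dimensionality of a span only requires manipulations over the computable field $\Q(A, A^{c_1},\dots,A^{c_{3g-3}})$ and linear algebra over it, and that the descent through the facets of $\Delta$ is a finite recursion with an a priori bound on depth (bounded by $\dim\Delta = 3g-3$). Granting these, which follow from a careful reading of the proofs in Sections \ref{sec:filtration}--\ref{sec:finalproof}, the corollary follows. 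I would phrase the proof as: ``The proof of Theorem \ref{thm:main} is constructive; we extract the algorithm,'' then walk through the three effective ingredients (producing the relations, running the induction, reading off $F$) and remark that termination is guaranteed by the strictly decreasing dimension of the polytopes involved in the recursion.
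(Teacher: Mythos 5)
Your proposal is correct and follows essentially the same route as the paper: the paper's proof of Corollary \ref{cor:explicit-genset} likewise just walks through the proof of Theorem \ref{thm:main} and checks that each ingredient (Lemma \ref{lem:linDep} via resolving crossings in the multicurve basis, the fusion rules turning vanishing polynomials into recurrences, the linear-algebra lemmas, the Chamber lemma, and the explicit choices of subspaces and operators) is effective. The only cosmetic difference is that termination is guaranteed by the lexicographic descent on the tuple $(n_1,\dots,n_{3g-3})$ rather than literally by a dimension drop, but this does not affect the argument.
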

Indeed, our whole proof gives a way of reducing the infinite generating set $(\varphi_c)_{c\in \Delta}$ down to a finite generating set. For detailed comments about how our methods are algorithmic, see Section \ref{sec:corollaries}.

On another note, for $\zeta\in \C,$ and $M$ a $3$-manifold, let us denote by $Sk_{\zeta}(M)$
the Kauffman bracket skein module evaluted at $A=\zeta.$ The finiteness theorem implies that $Sk_{\zeta}(M)$ is finite dimensional over $\C$ when $\zeta$ is transcendental. In constrast, for $\zeta$ a root of unity, $Sk_{\zeta}(M)$ may be infinite dimensional, see \cite[Theorem 2.1]{DKS}. An open question is whether $Sk_{\zeta}(M)$ is finite dimensional whenever $\zeta$ is not a root of unity. As a corollary of our proof, we get the following partial answer:
\begin{corollary}
	\label{cor:special-values}Let $M$ be a closed $3$-manifold. Then there exists an integer $n\geq 1$ and a polynomial $R\in \Z[A^{\pm 1}][X_1,\ldots, X_n],$ such that, for any $\zeta\in \C$ which is not a root of a non-zero polynomial of the form $R(A^{k_1},\ldots,A^{k_n})$ with $k_i \in \Z,$ we have
	$$\dim_{\C}Sk_{\zeta}(M) <+\infty.$$
\end{corollary}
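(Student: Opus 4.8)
The plan is to run the algorithm underlying Corollary~\ref{cor:explicit-genset} symbolically, keeping track of the denominators that appear, and to show that away from finitely many ``bad'' algebraic conditions on $\zeta$ the specialized skein module is spanned by the image of a fixed finite set. Concretely, fix a Heegaard splitting $M=\overline{H}\underset{\Sigma}{\cup}H'$ of genus $g$. The starting point is that $Sk_\zeta(M)$ is spanned by the images of the handlebody basis vectors $\varphi_c$, $c\in\Delta\subset\Z^{3g-3}$, specialized at $A=\zeta$ (this specialization makes sense because the $\varphi_c$ and the gluing maps are defined over $\Z[A^{\pm1}]$, or at worst over $\Z[A^{\pm1}]$ localized at cyclotomic-type factors which we absorb into $R$). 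The recurrence relations produced in Sections~\ref{sec:curvop}--\ref{sec:finalproof} have the form
$$R_k(A,A^{c_1},\ldots,A^{c_{3g-3}})\varphi_{c+k\delta_i}+\cdots+R_{-k}(A,A^{c_1},\ldots,A^{c_{3g-3}})\varphi_{c-k\delta_i}=0$$
with $R_j\in\Z[A^{\pm1}][X_1^{\pm1},\ldots,X_{3g-3}^{\pm1}]$ \emph{fixed} polynomials, together with the boundary-facet relations of Section~\ref{sec:polytope-boundary}, which are again built from finitely many such polynomials. Over $\Q(A)$ these relations reduce the generating set to a finite one $F\subset\Delta$; I would inspect this reduction and record which polynomial expressions $R_j(A,A^{c_1},\ldots)$ needed to be inverted along the way.

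The key point is that only finitely many such expressions occur. This is because the reduction argument in Sections~\ref{sec:induction1}--\ref{sec:finalproof} proceeds through finitely many steps (bounded in terms of $g$ and the combinatorics of $\Delta$), and at each step it uses a leading-coefficient polynomial from a fixed finite list of $R_j$'s, evaluated at the finitely many residue classes and facet positions that the argument distinguishes. Let $n=3g-3$ (or whatever the ambient dimension is in the body of the paper, together with the variable $A$ itself), and let $R\in\Z[A^{\pm1}][X_1,\ldots,X_n]$ be the product of all the (finitely many) distinct polynomials that appear as leading coefficients in this reduction, times all the cyclotomic-type factors needed to define the $\varphi_c$ and the gluing map over $\Z[A^{\pm1}]$. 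Then for any $\zeta\in\C$ that is not a root of any nonzero $R(\zeta^{k_1},\ldots,\zeta^{k_n})$ with $k_i\in\Z$ -- equivalently, $R$ stays invertible at every $(A,A^{c_1},\ldots,A^{c_n})$ we ever plug in, over the whole orbit of exponents $c$ -- every single recurrence relation used in the proof specializes to a relation over $\C$ with invertible leading coefficient. Running the same reduction over $\C$ then expresses every $\varphi_c|_{A=\zeta}$ in terms of $\{\varphi_c|_{A=\zeta}:c\in F\}$, so $\dim_\C Sk_\zeta(M)\le\#F<+\infty$.

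The main obstacle is the uniformity claim: that a \emph{single} finite list of polynomials controls all the relations, for all $c\in\Delta$ simultaneously, including near the boundary. This requires checking that the boundary analysis of Section~\ref{sec:polytope-boundary} only introduces finitely many new polynomial factors -- i.e. that the ``break-down'' of the bulk recurrences near each facet is repaired using facet-recurrences drawn from a finite family, one family per combinatorial type of facet, and that there are finitely many combinatorial types. One must also make sure the specialization $A\mapsto\zeta$ is legitimate: the handlebody basis and the action of $Sk(\Sigma)$ on $Sk(H)$ from \cite{DS25} involve denominators (Chebyshev-type normalizations, quantum integers), but these are all of the form $R(\zeta^k)$ for explicit $R$, so they can be folded into the polynomial $R$ in the statement. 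Finally, one should note that $\zeta$ a root of unity is automatically excluded because the quantum integers $[k]_\zeta$ then vanish, and such vanishings are among the conditions $R(\zeta^{k_1},\ldots)=0$; this is consistent with the known failure of finiteness at roots of unity in \cite[Theorem 2.1]{DKS}, so no stronger statement could be expected from this method.
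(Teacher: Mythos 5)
Your proposal is correct and follows essentially the same route as the paper: the proof of Theorem \ref{thm:main} uses only finitely many recurrence relations whose leading coefficients are specializations $R_i(A,A^{c_1},\dots,A^{c_n})$ of a fixed finite list of Laurent polynomials, one takes $R$ to be their product, and for $\zeta$ avoiding all roots of the resulting one-variable specializations the entire reduction runs verbatim over $\C$ at $A=\zeta$. Your additional care about folding in the quantum-integer denominators from the basis $\varphi_c$ and about the finiteness of combinatorial facet types is a reasonable elaboration of what the paper leaves implicit, but it is not a different argument.
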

Corollary \ref{cor:special-values} implies in particular that there are algebraic numbers $\zeta$ such that $\dim_{\C}Sk_{\zeta}(M) <+\infty,$ see Lemma \ref{lemma:special-roots}.

Finally, we have the following important corollary:
\begin{corollary}\label{cor:peripheral-ideal}
	Let $M$ be a compact oriented $3$-manifold with boundary not a disjoint union of spheres, and let $x\in Sk(M,\Z[A^{\pm 1}]).$ Then there exists $z\in Sk(\partial M,\Z[A^{\pm 1}])$ such that $z\neq 0$ and $z\cdot x=0.$
\end{corollary}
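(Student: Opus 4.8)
The plan is to deduce Corollary~\ref{cor:peripheral-ideal} from Theorem~\ref{thm:main} by a localization-and-clearing-denominators argument. First I would reduce to the case where $\partial M$ is connected: if $\partial M = \partial_1 \sqcup \cdots \sqcup \partial_r$ with no $\partial_i$ a sphere, pick a pants decomposition $\overline{\lambda}$ of $\partial M$ with at least one curve $\lambda_j$ in each component, and note that it suffices to find, for each component $\partial_i$, a nonzero $z_i \in Sk(\partial_i, \Z[A^{\pm 1}])$ killing $x$ after tensoring with $\Q(A)$, since then $z = \prod_i z_i$ (a product of curves in distinct components, hence a nonzero element of $Sk(\partial M, \Z[A^{\pm1}])$) annihilates $x$; the passage from $\Q(A)$-coefficients back to $\Z[A^{\pm1}]$-coefficients is handled by clearing denominators, using that $Sk(M,\Q(A)) = Sk(M,\Z[A^{\pm1}]) \otimes_{\Z[A^{\pm1}]} \Q(A)$ and that $Sk(\partial M)$ acts on it compatibly with the action of $Sk(\partial M, \Z[A^{\pm 1}])$ on $Sk(M,\Z[A^{\pm 1}])$.

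Next, with $\overline{\lambda}=(\lambda_1,\dots,\lambda_n)$ a pants decomposition of $\partial M$, Theorem~\ref{thm:main} tells us $\Sk^{\overline{\lambda}}(M) = Sk(M,\Q(A)) \otimes_{\Q(A)[\overline{\lambda}]} \Q(A,\overline{\lambda})$ is a finite-dimensional $\Q(A,\overline{\lambda})$-vector space. Consider the image $\overline{x}$ of $x$ in $\Sk^{\overline{\lambda}}(M)$, together with the images of $\overline{x}, \lambda_1 \cdot \overline{x}, \lambda_1^2 \cdot \overline{x}, \ldots$. Since $\Sk^{\overline{\lambda}}(M)$ is finite dimensional over $\Q(A,\overline{\lambda})$, these vectors are linearly dependent over $\Q(A,\overline{\lambda})$: there is a nonzero polynomial relation $\sum_{k} P_k(A,\lambda_1,\dots,\lambda_n)\, \lambda_1^k \cdot \overline{x} = 0$ in $\Sk^{\overline{\lambda}}(M)$, with $P_k$ in the subring $\Q(A)[\overline{\lambda}]$ (clear the denominators coming from $\Q(A,\overline{\lambda})$). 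Setting $z' = \sum_k P_k(A,\lambda_1,\dots,\lambda_n)\lambda_1^k \in \Q(A)[\overline{\lambda}] \subseteq Sk(\partial M)$, we have that $z' \cdot x$ maps to $0$ in $\Sk^{\overline{\lambda}}(M)$; this $z'$ is nonzero as an element of $Sk(\partial M)$ because a monomial basis in the commuting curves $\lambda_i$ is part of a basis of $Sk(\partial M)$, so a nonzero polynomial in them is a nonzero skein.

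The remaining gap is that $z' \cdot x = 0$ only in the \emph{localized} module $\Sk^{\overline{\lambda}}(M)$, not in $Sk(M,\Q(A))$ itself; localization at the multiplicative set $\Q(A)[\overline{\lambda}]\setminus\{0\}$ may kill torsion. To fix this, observe that $z' \cdot x$ being zero in $Sk(M,\Q(A)) \otimes_{\Q(A)[\overline{\lambda}]} \Q(A,\overline{\lambda})$ means there is a nonzero $s \in \Q(A)[\overline{\lambda}]$ with $s\cdot(z'\cdot x) = 0$ in $Sk(M,\Q(A))$. Then $z := s z'$ is a nonzero element of $\Q(A)[\overline{\lambda}] \subseteq Sk(\partial M)$ with $z \cdot x = 0$ in $Sk(M,\Q(A))$. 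Finally, clearing denominators: write $z = z_0/q$ with $z_0 \in \Z[A^{\pm 1}][\overline{\lambda}]$ primitive and $q \in \Z[A^{\pm 1}]$; then $z_0 \in Sk(\partial M, \Z[A^{\pm 1}])$ is nonzero and $z_0 \cdot x$ is a torsion element of $Sk(M,\Z[A^{\pm 1}])$ (it dies in $Sk(M,\Q(A))$), hence killed by some nonzero $p \in \Z[A^{\pm 1}]$; replacing $z_0$ by $p z_0$ gives the desired annihilator. The one point requiring care — and what I expect to be the main obstacle — is verifying that the monomials in the chosen curves $\lambda_i$ really do remain linearly independent in $Sk(\partial M, \Z[A^{\pm 1}])$ when the $\lambda_i$ lie in different components and include more than one curve per component; this follows from the fact that $Sk$ of a disjoint union is the tensor product and that on a connected surface the multicurves (in particular powers of a pants-decomposition curve, or disjoint unions of distinct such curves) form part of the standard basis, but the bookkeeping across components needs to be spelled out, and one must double-check that the hypothesis ``$\partial M$ not a union of spheres'' is exactly what guarantees each component carries a nontrivial curve so that $z$ can be taken nonzero.
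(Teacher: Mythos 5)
Your overall strategy (finite dimensionality of $\Sk^{\overline{\lambda}}(M)$ from Theorem~\ref{thm:main}, a linear dependence among iterates of a curve operator applied to $x$, then clearing denominators and torsion to descend to $\Z[A^{\pm 1}]$ coefficients) is the right one, and your final torsion-clearing step matches the paper's. But there is a fatal flaw in the middle: you take powers of $\lambda_1$, which is one of the curves of the pants decomposition and hence is \emph{already inverted} in the localization. The element $\lambda_1$ acts on $\Sk^{\overline{\lambda}}(M)$ as multiplication by the scalar $\lambda_1\in\Q(A,\overline{\lambda})$, so the vectors $\lambda_1^k\cdot\overline{x}$ all lie on the single line $\Q(A,\overline{\lambda})\cdot\overline{x}$. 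They are indeed linearly dependent, but every relation $\sum_k P_k\,\lambda_1^k\cdot\overline{x}=0$ with $\overline{x}\neq 0$ forces $\sum_k P_k\lambda_1^k=0$ in the \emph{field} $\Q(A,\overline{\lambda})$, hence $z'=\sum_k P_k\lambda_1^k=0$ in the polynomial ring $\Q(A)[\overline{\lambda}]$ as well. Your claim that $z'\neq 0$ because ``the $P_k$ are not all zero'' conflates non-vanishing of the coefficient vector with non-vanishing of the polynomial $\sum_k P_k\lambda_1^k$: since $\lambda_1$ appears both as the powered variable and inside the $P_k$, cancellation is not only possible but forced. So your construction produces $z'=0$ (unless $\overline{x}=0$ already, in which case you are done for other reasons).

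The missing idea is to introduce an auxiliary simple closed curve $\delta$ on $\partial M$ that is \emph{not} in $\overline{\lambda}$ — the paper takes $\delta$ meeting $\lambda_1$ once or twice and disjoint from the other $\lambda_i$. Since $\delta$ is not inverted in the localization, the vectors $\delta^n\cdot x$ are genuinely new directions in $\Sk^{\overline{\lambda}}(M)$, and their linear dependence yields a non-trivial ordered polynomial $\sum c_{i,n}\lambda_1^{i_1}\cdots\lambda_{3g-3}^{i_{3g-3}}\delta^n$ annihilating $x$ after localization. The price is that $\delta$ does not commute with $\lambda_1$ in $Sk(\partial M)$, so one must separately verify that a non-zero ordered polynomial in $\lambda_1,\ldots,\lambda_n,\delta$ is non-zero in $Sk(\partial M,\Z[A^{\pm 1}])$; this is the content of Lemma~\ref{lemma:ordered-polynomial} (reduced to the one-holed torus or four-holed sphere), a step your argument has no analogue of and which cannot be replaced by ``monomials in the commuting $\lambda_i$ are part of a basis.''
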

When taking $M=S^3\setminus L$ a link complement in $S^3,$ and $x=\emptyset \in Sk(S^3\setminus L,\Z[A^{\pm 1}])),$ we get that the peripheral ideal (defined in \cite{FGL02} by Frohman, Gelca and Lofaro) of any link is non-empty, thus answering a long-standing open question. In particular, by \cite{FGL02}, this gives a geometric proof that the colored Jones polynomial function of a knot is $q$-holonomic, a fact first proved by Garoufalidis and L\^e in \cite{GL05} by algebraic methods.

\textbf{Acknowledgements:} Both authors were partially supported by the ANR project "NAQI-34T" (ANR-23-ERCS-0008) over the course of this work. The first author was also partially supported by the FNRS in his ”Research Fellow” position at UCLouvain.
 The second author thanks David Jordan, Julien March\'e, Gregor Masbaum and Ramanujan Santharoubane for helpful discussions.

\section{Conventions and definitions}
\subsection{Skein modules}

In this subsection we give the basic definition of the Kauffman bracket skein module and we fix the notation we are going to use throughout the paper.

\begin{definition}\label{dfn:KBSMZ} For $M$ a compact oriented $3$-manifold, the Kauffman bracket skein module of $M$ with $\Z[A^{\pm1}]$ coefficients, denoted by $Sk(M,\Z[A^{\pm1}])$, is the quotient of the free $\Z[A^{\pm 1}]$-module generated by isotopy classes of framed links in $M,$ by the Kauffman relations, which are the following relations between framed links that are identical in the complement of a ball:
	\begin{center}
		\def \svgwidth{1.1\columnwidth}
\begingroup%
  \makeatletter%
  \providecommand\color[2][]{%
    \errmessage{(Inkscape) Color is used for the text in Inkscape, but the package 'color.sty' is not loaded}%
    \renewcommand\color[2][]{}%
  }%
  \providecommand\transparent[1]{%
    \errmessage{(Inkscape) Transparency is used (non-zero) for the text in Inkscape, but the package 'transparent.sty' is not loaded}%
    \renewcommand\transparent[1]{}%
  }%
  \providecommand\rotatebox[2]{#2}%
  \newcommand*\fsize{\dimexpr\f@size pt\relax}%
  \newcommand*\lineheight[1]{\fontsize{\fsize}{#1\fsize}\selectfont}%
  \ifx\svgwidth\undefined%
    \setlength{\unitlength}{310.27629089bp}%
    \ifx\svgscale\undefined%
      \relax%
    \else%
      \setlength{\unitlength}{\unitlength * \real{\svgscale}}%
    \fi%
  \else%
    \setlength{\unitlength}{\svgwidth}%
  \fi%
  \global\let\svgwidth\undefined%
  \global\let\svgscale\undefined%
  \makeatother%
  \begin{picture}(1,0.08365284)%
    \lineheight{1}%
    \setlength\tabcolsep{0pt}%
    \put(0,0){\includegraphics[width=\unitlength,page=1]{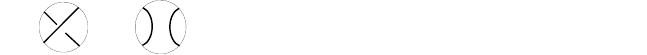}}%
    \put(0.1543921,0.02370045){\color[rgb]{0,0,0}\makebox(0,0)[lt]{\lineheight{0}\smash{\begin{tabular}[t]{l}$=A$\end{tabular}}}}%
    \put(0.29591666,0.02367871){\color[rgb]{0,0,0}\makebox(0,0)[lt]{\lineheight{0}\smash{\begin{tabular}[t]{l}$+A^{-1}$\end{tabular}}}}%
    \put(0.57577529,-0.02838106){\color[rgb]{0,0,0}\makebox(0,0)[lt]{\lineheight{0}\smash{\begin{tabular}[t]{l} \end{tabular}}}}%
    \put(0.53325876,0.02471831){\color[rgb]{0,0,0}\makebox(0,0)[lt]{\lineheight{0}\smash{\begin{tabular}[t]{l}$L \ \coprod$ \end{tabular}}}}%
    \put(0,0){\includegraphics[width=\unitlength,page=2]{kauffman.pdf}}%
    \put(0.67966381,0.02531714){\color[rgb]{0,0,0}\makebox(0,0)[lt]{\lineheight{0}\smash{\begin{tabular}[t]{l}$=(-A^2-A^{-2}) L$\end{tabular}}}}%
    \put(-0.00304793,0.0192645){\color[rgb]{0,0,0}\makebox(0,0)[lt]{\lineheight{0}\smash{\begin{tabular}[t]{l}K1:\end{tabular}}}}%
    \put(0.47835686,0.02531722){\color[rgb]{0,0,0}\makebox(0,0)[lt]{\lineheight{0}\smash{\begin{tabular}[t]{l}K2:\end{tabular}}}}%
    \put(0,0){\includegraphics[width=\unitlength,page=3]{kauffman.pdf}}%
  \end{picture}%
\endgroup%

	\end{center}
\end{definition}
\begin{definition}\label{dfn:KBSM}
	For $M$ as above, the Kauffman bracket skein module with $\Q(A)$ coefficients, simply denoted with $Sk(M)$, is defined as $Sk(M,\Z[A^{\pm1}])\otimes \Q(A)$. Throughout the rest of the paper we will simply call this object the skein module of $M$.
	
	Furthermore in some applications we will look at $Sk(M,\Z[A^{\pm1}])\otimes_{A=\zeta}\C$ for $\zeta\in\C^*$; this indicates the tensor product of $\Z[A^{\pm1}]$-modules where $A$ acts on $\C$ as multiplication by $\zeta$. This object will be denoted $Sk_\zeta(M)$, and will be called the skein module of $M$ at $\zeta$.
\end{definition}

\begin{remark}
	It is well known (see for example \cite[Proposition 2.2]{Prz99}) that removing a ball from a compact $3$-manifold does not change the skein module (in other words, the natural inclusion between the two manifolds induces an isomorphism). Notice furthermore that in the statement of Theorem \ref{thm:main}, if a component of $\partial M$ is a sphere, then it contains no curves of the pants decomposition of $\partial M$. This means that throughout the paper we can make the tacit assumption that no component of $\partial M$ is a sphere.
\end{remark}

\subsection{Graphs}

In what follows we will consider  graphs that are "lollipop trees", i.e. they are obtained from a tree by gluing \emph{loop edges} (i.e., edges whose $2$ endpoints coincide) onto the leaves. All these graphs will only have trivalent vertices. We will also look at paths in these lollipop trees; in this context a path $\gamma$ is simply a connected union of edges such that at each vertex of the graph, $\gamma$ contains at most two distinct edges containing the vertex (so for example, the subset of Figure \ref{fig:loopedpath} is a path). For such a path $\gamma$, we denote with $\partial\gamma$ the loop edges of $\gamma$, if any, and with $\mathring{\gamma}$ the path obtained from $\gamma$ by removing $\partial\gamma$. Similarly we denote with $\mathring{\Gamma}$ the subgraph of $\Gamma$ obtained by removing all loop edges.

\begin{figure}
	\includegraphics[scale=0.4]{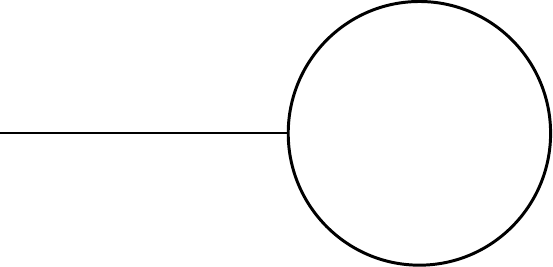}
	\caption{A path in $\Gamma$}
	\label{fig:loopedpath}
\end{figure}

In Section \ref{sec:polytope-boundary}, we will need to consider graphs where each half-edge has an orientation; these are called \emph{bidirected graphs}, and were introduced in \cite{EJ70}. We will only use the terminology related to these objects, rather than any specific result.

\begin{figure}
	\centering
	\begin{minipage}{.45\textwidth}
		\centering   
		
		\includegraphics[scale=0.35]{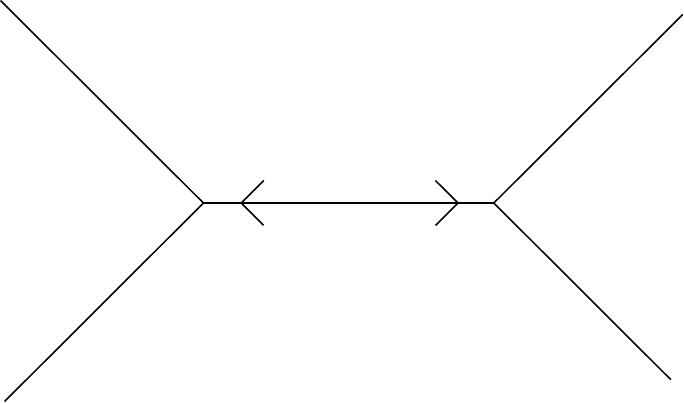}
		
	\end{minipage}   
	\begin{minipage}{.45\textwidth}
		\centering    
		\includegraphics[scale=0.35]{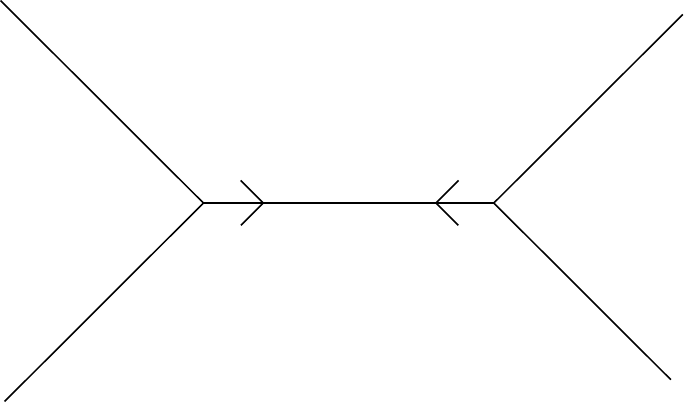}

	\end{minipage}
	\caption{Extraverted (left) and introverted (right) edges.}\label{fig:edges}
\end{figure}

\begin{definition}
	A \emph{half-edge} of a graph $\Gamma$ is a pair $(e,v)$ where $e$ is an edge of $\Gamma$ and $v$ is one of the endpoints of $v$. We denote the set of half-edges of $\Gamma$ by $\mathcal{H}$. An \emph{orientation} for a half-edge $(e,v)$ is the choice of a sign $\pm$; we will depict it as an arrow pointing towards $v$ if the sign is $+$, and away from $v$ if it is $-$. A \emph{bidirected graph} is a graph $\Gamma$ with a choice of sign for each half-edge, i.e. a map $o:\mathcal{H}\ra \{\pm\}$. Notice that if $e$ is an edge loop then it will only have one half-edge; for simplicity of notation we will behave as if it has two coinciding half-edges.
	
	We say that an edge in a bidirected graph is \emph{directed} if its half edges  have different signs, \emph{introverted} if the signs are both $-$ and \emph{extraverted} if they are both $+$ (see Figure \ref{fig:edges}). A directed edge has a natural orientation (in the usual sense of directed graphs). A \emph{path} is a connected union of half edges, such that at each vertex either none or $2$ adjacent half-edges are included in the path. A \emph{directed path} is a path in $\Gamma$ such that if it includes both half-edges of an edge, the edge must be directed, and at each of its vertices the orientations of the two half-edges are opposite. We will use these paths to define operators based on the edges they contain; we use the convention that a path contain an edge if it contains at least one of its half-edges.
	
	We will also allow bidirected graphs to have some edges where only one half-edge has a direction; if the sign of the half-edge is $+$ we say that the edge is a \emph{root}, we say it is a \emph{leaf} otherwise.
	
\end{definition}

\section{The localized skein module of compression bodies}

\label{sec:filtration}

The goal of this section will be to prove the main topological lemma that will be used to produce recurrence relations among the generators of $Sk^{\overline{\lambda}}(M).$ To do so, we will investigate the properties of the localized skein modules of compression bodies.

A \emph{compression body} is a $3$-manifold with boundary obtained by the following process. Start with $\Sigma$ be a connected closed compact oriented surface. Pick a family $\mathcal{C}$ of disjoint, non-trivial, pairwise non-parallel simple closed curve on $\Sigma.$ (note that $\mathcal{C}$ is not assumed to be maximal here). Attaching $2$-handles onto $\Sigma\times [0,1]$ along curves in $\mathcal{C}\times \lbrace 1 \rbrace,$ we obtain a compression body $C.$ Note that the boundary of $C$ consists of $\Sigma\times \lbrace 0 \rbrace$ and another surface $\Sigma',$ which is obtained from $\Sigma \times \lbrace 1 \rbrace$ by compressing along the curves of $\mathcal{C}.$ We will call $\Sigma \times \lbrace 0 \rbrace$ (resp. $\Sigma'$) the negative (resp. the positive) boundary of the compression body $C.$

Finally, if $\Sigma'$ contains components that are spheres $S^2,$ we allow to fill them up with $3$-balls and will still call the resulting manifold a compression body.

In particular, if $\mathcal{C}$ is a pants decomposition of $\Sigma,$ and we fill all spheres in $\partial C$ with $3$-balls, we obtain a handlebody $H=C$ with boundary $\Sigma.$

Note that in general, the positive boundary of $C$ may be disconnected. 
The following theorem is a basic result of $3$-dimensional topology, which can proved by Morse theory, see \cite{Mil:book}

\begin{theorem}
	Let $M$ be a connected compact oriented $3$-manifold. Then $M$ admits a splitting
	$$M=\overline{H}\underset{\Sigma}{\cup}C$$
	where $\Sigma$ is a closed compact connected oriented surface embedded in $M,$ where $H$ is a handlebody with boundary $\Sigma,$ and where $C$ is a compression body with negative boundary $\Sigma$ and positive boundary $\partial M.$
\end{theorem}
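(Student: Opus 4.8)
The plan is to prove this standard Heegaard-type splitting result via Morse theory, in the form adapted to manifolds with boundary. First I would pick a self-indexing Morse function $f:M\to[0,4]$ which restricts to a constant $4$ on $\partial M$ and which has no interior critical points of index $0$ or $4$ (these can be cancelled against index $1$ and index $3$ critical points respectively, or absorbed using connectedness of $M$). Such a function exists by the standard rearrangement and cancellation lemmas of Morse theory (see \cite{Mil:book}). Setting $\Sigma:=f^{-1}(3/2)$, I claim that $\Sigma$ is the desired splitting surface: indeed $f^{-1}([0,3/2])$ contains only critical points of index $1$, hence is obtained from a collar $\Sigma\times[0,1]$ (a neighbourhood of the empty negative level, built up from $0$-handles and then index-$1$ handles) and is therefore a handlebody $\overline H$ with $\partial H=\Sigma$; while $f^{-1}([3/2,4])$ contains only critical points of index $2$ and $3$, so attaching the $2$-handles first gives a compression body from $\Sigma$, and the subsequent $3$-handles merely fill in $S^2$ boundary components, which by our convention is still allowed in a compression body.

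The key steps, in order, are: (1) construct a Morse function on $M$ that is constant and maximal on $\partial M$, using that $\partial M$ is a regular level and $M$ compact; (2) use the rearrangement lemma to make $f$ self-indexing, so that all critical points of index $i$ lie on the level $f^{-1}(i)$; (3) eliminate index-$0$ critical points — each can be cancelled with an index-$1$ point (since $M$ is connected, the descending disk of some index-$1$ point joins distinct components of the sublevel set) — and dually eliminate index-$4$ points against index-$3$ points, where connectedness of $M$ (hence of a regular level just below $f=4$, joined appropriately to $\partial M$) is again used; (4) take $\Sigma=f^{-1}(3/2)$ and identify the two pieces as a handlebody and a compression body, noting that attaching $1$-handles to a ball-thickened-empty-surface yields a handlebody with connected boundary, and that attaching $2$-handles to $\Sigma\times[0,1]$ is by definition a compression body, with any residual $3$-handles filling $S^2$ components; (5) verify that $\Sigma$ is connected, which follows because a handlebody has connected boundary.

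The main obstacle is step (3): ensuring one can genuinely remove all index-$0$ and index-$4$ critical points. For index $0$ the argument is classical — connectedness of $M$ forces the flow from some index-$1$ critical point to connect the new $0$-handle to the rest, allowing a handle cancellation — but one must be slightly careful that after removing all but one index-$0$ point the remaining piece below level $3/2$ is a genuine handlebody rather than a disjoint union, which is exactly where connectedness is invoked. Dually, for index $4$ one works with $-f$, or equivalently attaches the $3$-handles last and observes they cap off spheres; the subtlety is that the positive boundary $\partial M$ need not be connected, so one cannot cancel index-$3$ points against a single index-$4$ point componentwise — instead one uses that $M$ is connected to reduce to a single top critical point and then cancels it. Everything else is bookkeeping with handle decompositions, and the statement is precisely assembled to make the conventions (filling $S^2$'s with balls, allowing disconnected positive boundary) line up with what Morse theory delivers.
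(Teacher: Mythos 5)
Your proposal is correct and matches the paper's approach exactly: the paper gives no proof of this theorem, merely noting that it is a standard fact provable by Morse theory with a reference to \cite{Mil:book}, and your argument is precisely that standard Morse-theoretic construction (self-indexing function constant on $\partial M$, cancellation of excess minima using connectedness, splitting along a level surface between the index-$1$ and index-$2$ critical values). Two small slips to fix: a $3$-manifold has no index-$4$ critical points (you mean the index-$3$ local maxima, which, as you correctly say later, just become $3$-handles capping off sphere components and are explicitly permitted by the paper's convention on compression bodies), and your opening claim that $f$ has \emph{no} interior index-$0$ critical points contradicts your correct later statement that exactly one index-$0$ point must be retained so that $f^{-1}([0,3/2])$ is a connected handlebody and hence $\Sigma$ is connected.
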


In the following, we will be interested in the localized skein module $Sk^{\overline{\lambda}}(C)$ for a fixed pants decomposition $\overline{\lambda}$ of $\partial M,$ which is also the positive boundary of $C.$ The following lemma will allow us to pick a pants decomposition of the negative boundary of $C$ that is in some way compatible with $\overline{\lambda}.$
\begin{lemma}\label{lemma:compression-pantsdec}
	Let $C$ be a compression body with negative boundary $\Sigma$ and positive boundary $S,$ with $S$ containing no spheres, and let $\overline{\lambda}$ be a pants decomposition of $S.$ Then, there exists a pair of pants decomposition $\mathcal{C}$ of $\Sigma,$ such that each curve $\gamma$ of $\mathcal{C}$ either bounds a disk in $C$ or cobounds an annulus with a curve in $S$ isotopic to a curve of $\overline{\lambda}.$ Moreover, the collection of such disks and annuli can be chosen disjoint.
\end{lemma}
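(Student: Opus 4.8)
The plan is to build the pants decomposition $\mathcal{C}$ of $\Sigma$ in two stages: first handle the compressing disks coming from the construction of $C$, then handle the rest of $\Sigma$ by pushing down the curves of $\overline{\lambda}$. Recall that a compression body $C$ is built from $\Sigma\times[0,1]$ by attaching $2$-handles along a system $\mathcal{D}$ of disjoint simple closed curves $d_1,\dots,d_k$ on $\Sigma\times\{1\}$ (the attaching curves), possibly followed by filling in sphere components of the positive boundary with balls. I would first isotope everything so that these attaching curves live on $\Sigma=\Sigma\times\{0\}$ as disjoint simple closed curves bounding disjoint disks in $C$ (the cocores of the $2$-handles, pushed to the negative boundary). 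Call this disjoint family of curves $\mathcal{C}_0$, with disjoint disks $\{D_i\}$. Some of these $d_i$ may be parallel to each other or trivial on $\Sigma$ after isotopy; discard redundant and trivial ones, keeping a subfamily that is still a disjoint collection of non-trivial, pairwise non-parallel curves bounding disjoint disks in $C$. (Trivial attaching curves correspond to sphere components being capped off, which is harmless.)

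Next I would observe that cutting $\Sigma$ along $\mathcal{C}_0$ and capping the resulting boundary circles with disks yields exactly the positive boundary $S$ (up to the sphere-capping), so $S$ is naturally identified with $\Sigma$ cut along $\mathcal{C}_0$. Under this identification the pants decomposition $\overline{\lambda}=(\lambda_1,\dots,\lambda_n)$ of $S$ pulls back to a family of disjoint simple closed curves $\overline{\lambda}'$ on $\Sigma\setminus\mathcal{C}_0$, i.e.\ on $\Sigma$ and disjoint from $\mathcal{C}_0$. Each such curve $\lambda_j'$ cobounds an annulus $A_j$ in $C$ with a parallel copy of $\lambda_j$ sitting in $S$ — this is because $C$ deformation retracts onto $\Sigma\times\{0\}$ with the disks $D_i$ collapsed, so a curve on $\Sigma$ disjoint from $\mathcal{C}_0$ can be isotoped through the product region $\Sigma\times[0,1]$ (avoiding the $2$-handles) to a curve on $S$. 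By a general-position/innermost-arc argument one arranges the annuli $A_j$ to be disjoint from each other and from the disks $D_i$, since away from the $2$-handles $C$ is just a product.

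Finally I would extend $\mathcal{C}_0\cup\overline{\lambda}'$ to a full pants decomposition $\mathcal{C}$ of $\Sigma$. The point is that $\mathcal{C}_0$ is a disjoint family of curves bounding disjoint disks in $C$, and on $\Sigma$ cut along $\mathcal{C}_0$ (which is $S$), the family $\overline{\lambda}'$ is \emph{already} a pants decomposition; so $\mathcal{C}_0\cup\overline{\lambda}'$ is a pants decomposition of $\Sigma$, or becomes one after discarding any curve of $\mathcal{C}_0$ that happens to be parallel on $\Sigma$ to a curve of $\overline{\lambda}'$ or to another curve of $\mathcal{C}_0$ (such a curve still bounds a disk, so the statement is unaffected — we just don't include it). Thus $\mathcal{C}=\mathcal{C}_0\cup\overline{\lambda}'$ (after this pruning), and by construction every curve of $\mathcal{C}$ either bounds a disk in $C$ (those in $\mathcal{C}_0$) or cobounds an annulus in $C$ with a curve of $\overline{\lambda}$ in $S$ (those in $\overline{\lambda}'$), with all the disks and annuli disjoint.

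\medskip

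\noindent\textbf{Main obstacle.} The delicate point is the simultaneous disjointness: after pulling $\overline{\lambda}$ back to $\Sigma$ we must be sure the resulting curves, together with $\mathcal{C}_0$, actually form a \emph{valid} pants decomposition (no accidental parallelism, no triviality) and that \emph{all} of the compressing disks and the pullback annuli can be realized disjointly inside $C$. The former is handled by pruning redundant curves of $\mathcal{C}_0$ — crucially, removing a disk-bounding curve only weakens the hypothesis we need, so pruning is free. The latter rests on the fact that outside a regular neighborhood of the $2$-handles $C$ is homeomorphic to $\Sigma\times[0,1]$, so disjoint curves on $\Sigma$ disjoint from $\mathcal{C}_0$ trace out disjoint vertical annuli, and a standard innermost-disk argument removes any intersections of these annuli with the cocore disks $D_i$.
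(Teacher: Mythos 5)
Your construction up through the pullback of $\overline{\lambda}$ to curves $\overline{\lambda}'\subset\Sigma$ disjoint from the attaching curves $\mathcal{C}_0$, and the realization of the disks and vertical annuli disjointly, is fine. The gap is in the final step: the claim that $\mathcal{C}_0\cup\overline{\lambda}'$ is already a pants decomposition of $\Sigma$ (after pruning) is false, and the reason is that ``$\Sigma$ cut along $\mathcal{C}_0$'' is not $S$ but $S$ with $2k$ open disks removed ($k=|\mathcal{C}_0|$). Consequently the complementary regions of $\mathcal{C}_0\cup\overline{\lambda}'$ in $\Sigma$ are the pairs of pants of $\overline{\lambda}$ with extra holes, and the family is not maximal. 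A count makes this unavoidable: surgery along $k$ curves gives $|\overline{\lambda}|=3g-3-3k$ (in the generic case), so $|\mathcal{C}_0\cup\overline{\lambda}'|=3g-3-2k<3g-3$. Concretely, take $\Sigma$ of genus $2$ and one non-separating compressing curve $c$, so $S$ is a torus and $\overline{\lambda}$ is a single curve: you get $2$ curves on a genus-$2$ surface, whose complement contains a $4$-holed sphere, and a third curve is needed. Those $2k$ missing curves are exactly where the content of the lemma lies: you must show they can be chosen to either bound disks in $C$ (e.g.\ curves encircling several copies of compressing curves, capped by parallel copies of the compressing disks) or to cobound annuli with parallel copies of curves of $\overline{\lambda}$ (this case is forced when, say, a complementary region is a $4$-holed sphere whose holes are one copy of some $c_i$ and three $\lambda$-curves), and that all these disks and annuli remain disjoint from each other and from the ones you already built. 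Pruning cannot help here --- the problem is too few curves, not too many.

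For comparison, the paper avoids this bookkeeping entirely by decomposing $C$ as a boundary connected sum of thickened surfaces and a handlebody: the lemma is trivial for each piece (take the same curves, resp.\ any disk system), and under boundary connected sum one takes the union of the two decompositions together with the boundary of the gluing disk. If you want to salvage your direct approach, you need to add an explicit completion step in each multi-holed pair of pants and verify the disk/annulus condition for each added curve; the inductive connected-sum argument is shorter.
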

\begin{proof}
	First notice that the Lemma is immediate if $C$ is either a handlebody or a thickened surface; in the first case we choose any disk system for the handlebody, and in the second we can choose the same collection of curves.
	
	Now consider two compression bodies $C_1,C_2$ and $\overline{\lambda_1},\overline{\lambda_2}$ two pants decompositions of the positive boundaries of $C_1,C_2$ respectively. Suppose $\widetilde{\lambda_1},\widetilde{\lambda_2}$ are pants decompositions of the negative boundaries of $C_1,C_2$ respectively satisfying the Lemma. Then consider the boundary connected sum $C_1\natural C_2$ along two disks in the negative boundary of $C_1$ and $C_2.$ Its positive boundary is the disjoint union of the positive boundaries of $C_1,C_2$, and $\overline{\lambda_1}\cup\overline{\lambda_2}$ is a pants decomposition for it. Then $\widetilde{\lambda_1}\cup\widetilde{\lambda_2}\cup \lambda$, where $\lambda$ is the boundary of the gluing disk, is a pants decomposition of the negative boundary satisfying the Lemma. 
	
	Since any compression body $C$ is a boundary connected sum of thickened surfaces and a handlebody, we can build a pants decomposition of the negative boundary by doing so in each piece of the boundary connected sum decomposition, and recombining them as above.
\end{proof}
\color{black}
Now, we fix a compression body $C,$ with negative boundary $\Sigma$ and positive boundary $S,$ a pants decomposition $\overline{\lambda}$ of $S$ and a pants decomposition of  $\Sigma$ given by Lemma \ref{lemma:compression-pantsdec}, denoted by $\mathcal{C}=\lbrace \alpha_1,\ldots, \alpha_{3g-3}\rbrace$ . Let us note that $Sk(C)$ is generated by links in $\Sigma \times [0,1],$ since $C$ is obtained from $\Sigma\times [0,1]$ by $2$-handle attachments. The same is true for the localized skein module $Sk^{\overline{\lambda}}(C)$ and therefore the latter is spanned by multicurves in $\Sigma.$

We will denote by 
$$\pi:Sk(\Sigma)\longrightarrow Sk^{\overline{\lambda}}(C)$$
the map which is the composition of the natural surjective map $Sk(\Sigma)\longrightarrow Sk(C)$ coming from the embedding $\Sigma\times [0,1] \subset C$ and the localization map $Sk(C)\longrightarrow Sk^{\overline{\lambda}}(C).$

For two multicurves $\gamma,\delta$ in $\Sigma,$ we will denote by $I(\gamma,\delta)$ the \emph{geometric intersection number} of $\gamma$ and $\delta,$ which is the minimal number of intersections among all representatives of $\gamma$ and $\delta.$

For $n\geq 1,$ we introduce the following subspace of $Sk^{\overline{\lambda}}(C):$

$$\mathcal{F}_n=\mathrm{Span}_{\mathbb{Q}(A,\overline{\lambda})}\left(\pi(\gamma),\ | \ \gamma \subset \Sigma \ \textrm{multicurve such that} \ \forall \alpha_i \in \mathcal{C}, I(\gamma,\alpha_i)\leq n \right).$$

\begin{lemma}\label{lem:dimPoly} We have $\dim_{\Q(A,\overline{\lambda})}(\mathcal{F}_n)=O(n^{3g-3}).$
\end{lemma}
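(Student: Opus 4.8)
The plan is to reduce the dimension bound to a counting problem about multicurves in $\Sigma$ with bounded geometric intersection numbers against the pants curves $\alpha_1,\dots,\alpha_{3g-3}$. First I would recall the standard fact that over $\Q(A)$ the skein module $Sk(\Sigma)$ has a basis indexed by isotopy classes of multicurves in $\Sigma$ (i.e.\ the multicurve basis). Since $\mathcal{F}_n$ is spanned by the images $\pi(\gamma)$ for $\gamma$ a multicurve with $I(\gamma,\alpha_i)\le n$ for all $i$, we immediately get
$$\dim_{\Q(A,\overline\lambda)}(\mathcal{F}_n)\ \le\ \#\{\gamma\subset\Sigma \text{ multicurve}: I(\gamma,\alpha_i)\le n\ \forall i\},$$
so it suffices to show the right-hand side is $O(n^{3g-3})$.

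Next I would use Dehn--Thurston coordinates associated to the pants decomposition $\mathcal{C}=\{\alpha_1,\dots,\alpha_{3g-3}\}$. Recall that a multicurve $\gamma$ in $\Sigma$ is determined by its intersection numbers $m_i=I(\gamma,\alpha_i)\ge 0$ together with twisting parameters $t_i\in\Z$ along each $\alpha_i$, subject to the usual parity/triangle constraints in each pair of pants, and with the convention that $t_i$ ranges over $\Z$ when $m_i>0$ but only records a non-negative number of copies of $\alpha_i$ when $m_i=0$. The key point is the bound on the twisting parameters: if $I(\gamma,\alpha_i)\le n$ for every $i$, then in each pair of pants the arcs of $\gamma$ are constrained, and a standard estimate shows $|t_i|\le C\cdot n$ for a constant $C$ depending only on the combinatorics of the pants decomposition (each full Dehn twist about $\alpha_i$ changes $I(\gamma,\alpha_j)$ for some neighboring curve $\alpha_j$ by roughly $m_i$, so once the $m_j$ are bounded by $n$ the admissible twisting is bounded linearly in $n$). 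Hence the number of admissible $(m_i)$-tuples is $O(n^{3g-3})$, each coming with $O(n^{3g-3})$ choices of twisting tuple $(t_i)$ — wait, that would give $O(n^{6g-6})$, so I need to be more careful here, see the next paragraph.

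To get the sharp exponent $3g-3$ rather than $6g-6$, I would argue that the twisting parameters are \emph{not} independent of the intersection numbers in the relevant range. Concretely: fixing the tuple $(m_1,\dots,m_{3g-3})$ with all $m_i\le n$, the twisting $t_i$ satisfies $|t_i|\le \max(m_i, 1)\cdot C'$ up to a bounded additive error, because a twist about $\alpha_i$ only affects the picture when $\gamma$ actually crosses $\alpha_i$ and the number of ``new'' intersections it can create while keeping the curve reduced is controlled by $m_i$ itself (when $m_i=0$ the parameter $t_i$ is just the number of parallel copies of $\alpha_i$, also $\le n$). Thus the number of multicurves with the given $(m_i)$ is $O\big(\prod_i (m_i+1)\big)$, and summing over all $(m_i)$ with $m_i\le n$ gives $\sum_{(m_i)}\prod_i(m_i+1)=O\big((\sum_{m=0}^n (m+1))^{3g-3}\big)=O(n^{2(3g-3)})$ — still too big. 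The genuinely correct bookkeeping is: the pair $(m_i,t_i)$ with $|t_i|\lesssim m_i$ contributes $O(m_i^2+1)$, but the total intersection number of $\gamma$ with a fixed curve transverse to all $\alpha_i$ is comparable to $\sum_i(m_i+|t_i|)$, and it is this \emph{single} linear functional being $\le O(n)$ (not each $m_i$ separately) together with nonnegativity that cuts a simplex-like region of dimension $3g-3$ in the $(m_i)$-space, with the $t_i$ already absorbed. I will phrase the bound as: the set of reduced Dehn--Thurston coordinate vectors $(m_i,t_i)$ of multicurves meeting each $\alpha_i$ at most $n$ times is contained in a set of the form $\{(m,t): 0\le m_i\le n,\ |t_i|\le \kappa\, n\}$ cut down by the constraint that it correspond to a genuine multicurve, and then invoke the classical polynomial growth estimate for the number of multicurves of bounded length, which is $\Theta(n^{6g-6})$ in general — so in fact I expect the honest statement to require using that $\pi$ collapses many of these.

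The main obstacle, then, is exactly this discrepancy: a naive count of multicurves with $I(\gamma,\alpha_i)\le n$ gives $O(n^{6g-6})$, not $O(n^{3g-3})$, so the proof must exploit that after applying $\pi$ — pushing into the compression body $C$ and localizing at $\overline\lambda$ — multicurves differing by twists along the $\alpha_i$ that bound disks in $C$, or along curves cobounding annuli with the $\lambda_j$, become proportional. I would therefore: (i) use Lemma~\ref{lemma:compression-pantsdec} to split the curves $\alpha_i$ into those bounding disks in $C$ and those cobounding annuli with curves of $\overline\lambda$; (ii) observe that a Dehn twist along an $\alpha_i$ bounding a disk in $C$ acts trivially (isotopy through the disk) on $\pi(\gamma)$, so the corresponding twisting parameter $t_i$ does not increase the dimension at all; (iii) observe that a Dehn twist along an $\alpha_i$ cobounding an annulus with $\lambda_j$ changes $\pi(\gamma)$ only by the invertible scalar coming from $\lambda_j$ after localization, so again $t_i$ contributes nothing to the dimension count. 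This kills roughly one parameter per pants curve, leaving only the $3g-3$ intersection-number parameters $(m_1,\dots,m_{3g-3})$, each ranging over $\{0,1,\dots,n\}$, whence $\dim_{\Q(A,\overline\lambda)}(\mathcal{F}_n)=O(n^{3g-3})$. Making steps (ii) and (iii) precise — in particular checking that the twist really is realized by an ambient isotopy in $C$ (for the disk case) or differs by exactly a power of $\lambda_j\in\Sk(S)$ acting through the annulus after localization — is the crux, and is where I expect to spend the most care.
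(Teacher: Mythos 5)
Your high-level strategy is the same as the paper's: reduce to Dehn--Thurston coordinates and use the structure of the compression body (Lemma \ref{lemma:compression-pantsdec}, splitting the pants curves into disk-bounding and annulus-cobounding ones) to kill the $3g-3$ twist parameters, leaving only the intersection parameters. However, the two steps you yourself identify as the crux are both false as stated, and fixing them changes the shape of the argument. For step (ii): if $\alpha_i$ bounds a disk in $C$, the Dehn twist along $\alpha_i$ inserts a full twist of the $m_i$ strands of $\gamma$ crossing $\alpha_i$, and this full-twist braid is \emph{not} realized by an ambient isotopy in $C$ when $m_i\geq 2$ (already for two strands it is a clasp tangle in a ball, not two parallel arcs). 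What is true is that resolving the crossings via the Kauffman relations gives $\pi(T_{\alpha_i}^{\pm 1}\gamma)=A^{k}\pi(\gamma)+(\text{terms with strictly smaller intersection with }\alpha_i)$, since every non-identity resolution creates a turnback. For step (iii): if $\alpha_i$ cobounds an annulus with $\lambda_j$, the twist does not act by an invertible scalar; rather, stacking $\lambda_j$ (which is isotopic in $C$ to $\alpha_i$) onto $\gamma$ and resolving the $m_i$ crossings yields a \emph{second-order} relation $\lambda_j\cdot\pi(\gamma_k)=A^{-m_i}\pi(\gamma_{k+1})+A^{m_i}\pi(\gamma_{k-1})+(\text{lower order})$, where $\gamma_k$ denotes the $k/m_i$-fractional twist of $\gamma$. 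After localization one can solve for $\gamma_{k+1}$ in terms of $\gamma_k$, $\gamma_{k-1}$ and lower-order terms, so the twist coordinate is reduced to two residual values (say $\{0,1\}$) rather than one --- harmless for the $O(n^{3g-3})$ bound, but not the mechanism you describe.

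The consequence of both corrections is that every twist-reduction step produces error terms lying in a smaller filtration piece $\mathcal{F}_{(n_1,\dots,n_i-1,\dots,n_{3g-3})}$, so the argument cannot be closed in one pass: you need to set up an induction on the total intersection number $n_1+\cdots+n_{3g-3}$ (with base case handled by observing that curves with all intersection coordinates zero are parallel copies of the $\alpha_i$, hence polynomials in $A$ and the $\lambda_j$ times the empty link). Your proposal omits this induction entirely, and without it the "lower order terms" are unaccounted for. The earlier portion of your write-up (the attempts to bound twists linearly in the intersection numbers, leading to $O(n^{6g-6})$ or $O(n^{2(3g-3)})$) is indeed a dead end, as you suspected: for the skein module of the surface itself the count genuinely is $\Theta(n^{6g-6})$, and only the passage through $\pi$ saves the exponent.
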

The proof of this lemma will make use of Dehn-Thurston coordinates on the set of multicurves on $\Sigma,$ which we recall below.

Given $\Sigma$ a compact closed oriented surface of genus $g$, and $\mathcal{C}=\lbrace c_1,\ldots,c_{3g-3} \rbrace,$ a pants decomposition of $\Sigma,$  the set of isotopy classes of multicurves $\gamma$ on $\Sigma$ may be parametrized by the so-called Dehn-Thurston coordinates $(I_j(\gamma),tw_j(\gamma)).$

 The intersection coordinates $I_j(\gamma)\in \N$ are simply $I(\gamma,c_j),$ where $I$ is the geometric intersection number. Those coordinates satisfy some parity conditions: if $c_i,c_j,c_k$ cobound a pair of pants, then $I_i(\gamma)+I_j(\gamma)+I_k(\gamma)$ is even. 
 
 The twist coordinates $tw_j(\gamma)$ may be most conveniently defined as relative coordinates. If $I_j(\gamma)=0,$ then $tw_j(\gamma)\in \N$ is the number of curves in $\gamma$ that are isotopic to $c_i.$ If however $I_j(\gamma)=I_j(\gamma_+)=k>0,$ then we will set $tw_i(\gamma_+)-tw_i(\gamma)=1$ if $\gamma_+$ is obtained from $\gamma$ by $1/k$-fractional Dehn twist along $c_i$ (see Figure \ref{fig:twistFrac}).

  \begin{figure}[h]
 	\centering
 	\def \svgwidth{.5\columnwidth}
\begingroup%
  \makeatletter%
  \providecommand\color[2][]{%
    \errmessage{(Inkscape) Color is used for the text in Inkscape, but the package 'color.sty' is not loaded}%
    \renewcommand\color[2][]{}%
  }%
  \providecommand\transparent[1]{%
    \errmessage{(Inkscape) Transparency is used (non-zero) for the text in Inkscape, but the package 'transparent.sty' is not loaded}%
    \renewcommand\transparent[1]{}%
  }%
  \providecommand\rotatebox[2]{#2}%
  \newcommand*\fsize{\dimexpr\f@size pt\relax}%
  \newcommand*\lineheight[1]{\fontsize{\fsize}{#1\fsize}\selectfont}%
  \ifx\svgwidth\undefined%
    \setlength{\unitlength}{346.97938453bp}%
    \ifx\svgscale\undefined%
      \relax%
    \else%
      \setlength{\unitlength}{\unitlength * \real{\svgscale}}%
    \fi%
  \else%
    \setlength{\unitlength}{\svgwidth}%
  \fi%
  \global\let\svgwidth\undefined%
  \global\let\svgscale\undefined%
  \makeatother%
  \begin{picture}(1,0.61026734)%
    \lineheight{1}%
    \setlength\tabcolsep{0pt}%
    \put(0,0){\includegraphics[width=\unitlength,page=1]{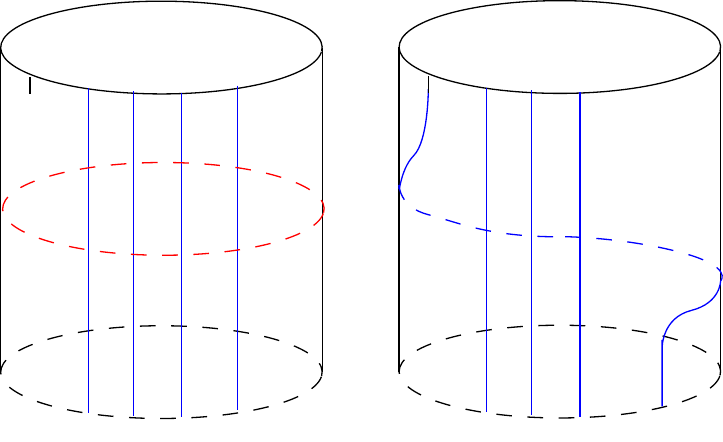}}%
    \put(0.19074587,0.00175632){\color[rgb]{0,0,1}\makebox(0,0)[lt]{\lineheight{1.25}\smash{\begin{tabular}[t]{l}$\gamma$\end{tabular}}}}%
    \put(0.46810488,0.32799766){\color[rgb]{1,0,0}\makebox(0,0)[lt]{\lineheight{1.25}\smash{\begin{tabular}[t]{l}$c$\end{tabular}}}}%
    \put(0.73454438,0.006554){\color[rgb]{0,0,1}\makebox(0,0)[lt]{\lineheight{1.25}\smash{\begin{tabular}[t]{l}$\gamma_+$\end{tabular}}}}%
  \end{picture}%
\endgroup%

 	\caption{The curve $\gamma_+$ is obtained from the curve $\gamma$ by fractional $1/4$-Dehn twist along the curve $c.$}
 	\label{fig:twistFrac}
 \end{figure}
 
 The coordinates $tw_i(\gamma)$ are then valued in $\Z,$ and one can check isotopy classes of multicurves on $\Sigma$ are in bijective correspondance with the Dehn-Thurston coordinates $(I_i(\gamma),tw_i(\gamma))_{1\leq i \leq 3g-3}$ satisfying the above mentioned conditions.
 
 We remark that one may fix an origin to twist coordinates by picking a trivalent ribbon graph $\Gamma$ embedded on $\Sigma$ that is dual to the pants decomposition $\mathcal{C}$ (meaning, each pants contains a single trivalent vertex of $\Gamma$ and each curve $c_i$ intersects a single edge of $\Gamma$ along an interval), and requiring for that multicurves supported on $\Gamma,$ the twist coordinates vanish. 
 
 We will however not need this and will work with twist coordinates viewed as relative coordinates.
 
\begin{proof}[Proof of Lemma \ref{lem:dimPoly}]
	We will show that $\mathcal{F}_n$ is spanned over $\Q(A,\overline{\lambda})$ by the $\pi(\gamma)$ for $\gamma$ multicurves that have all their twist coordinates in $\lbrace 0,1\rbrace.$ Because of the above description of the Dehn-Thurston coordinates, this will imply that $\dim_{\Q(A,\overline{\lambda})}(\mathcal{F}_n)\leq (2n)^{3g-3}$ and prove the lemma.
		
	We will prove this above claim more generally for the spaces 
	$$\mathcal{F}_{n_1,\ldots,n_{3g-3}}=\mathrm{Span}_{\mathbb{Q}(A,\overline{\lambda})}\left(\pi(\gamma),\ | \ \gamma \subset \Sigma \ \textrm{multicurve s.t.} \ \forall \alpha_i \in \mathcal{C}, I(\gamma,\alpha_i)\leq n_i \right),$$
	where $(n_1,\ldots,n_{3g-3})\in \N^{3g-3},$ by induction on $N=n_1+\ldots+n_{3g-3}.$ 
	
	If $N=0$ then the multicurves with intersection coordinates all zero are unions of parallel copies of each of the curves $c_i.$ But the curves $c_i$ either bound disks, in which case we can erase them up to multiplying by $-A^2-A^{-2},$ or are isotopic to curves $\lambda_{j_i}$ on the positive boundary of $C.$ In the end, any such multicurve is equal to $P(A,\lambda_1,\ldots,\lambda_{3g-3})\emptyset,$ for some polynomial $P.$ This proves the case $N=0.$
	
	Now, consider $(n_1,\ldots,n_{3g-3})\in \N^{3g-3}$ with $n_1+\ldots+n_{3g-3}=N$ and assume that the claim is proved for $n_1+\ldots+n_{3g-3}<N.$
	
	If some $n_i=0$, then a multicurve $\gamma$ with intersection coordinates $(n_1,\ldots,n_{3g-3})$ is proportional to one with $tw_i(\gamma)=0$ by exactly the same argument as above. Now let us consider $i$ such that $n_i>0.$ For $k\in \Z,$ let $\gamma_k$ be obtained from $\gamma$ by $k/n_i$-fractional Dehn twist along $c_i.$ There are two cases again:
	
	Either $c_i$ bounds a disk in $C.$ In that case, one may isotope $\gamma_k$ to a framed link in $\Sigma\times [0,1],$ which is obtained from $\gamma$ by modifying $\gamma$ in a neighborhood of $c_i,$ replacing $n_i$ parallel strands by a braid. Then by the Kauffman relations, one gets for any $k\in \Z,$
	$$\pi(\gamma_{k+1})=A^{6+(n_i-1)}\pi(\gamma_k) + \textrm{lower order terms},$$
	where by lower order terms we mean an element of $\mathcal{F}_{(n_1,\ldots,n_i-1,\ldots,n_{3g-3})}.$
	If on the other hand, $c_i$ is isotopic to a curve $\lambda_j,$ then we get
	$$\lambda_j \cdot \gamma_k= A^{-n_i}\gamma_{k+1} +A^{n_i}\gamma_{k-1} + \textrm{lower order terms}.$$
	This shows that one may span $\mathcal{F}_{(n_1,\ldots,n_{3g-3})}$ by multicurves with all twist coordinates in $\lbrace 0,1 \rbrace,$ up to lower order terms, and we conclude by the induction hypothesis.

\end{proof}

The following lemma will be the key topological ingredient of our proof. We will use that to get various elements in $Sk(\Sigma)$ that vanish in $Sk^{\overline{\lambda}}(C).$ When we view these elements as curve operators acting on $Sk(H),$ we will get recurrence relations for the generators of $Sk^{\overline{\lambda}}(M),$ as we will explain in the next section.
\begin{lemma}\label{lem:linDep} Let $c_1,\ldots c_{3g-2}$ be elements in $Sk(\Sigma).$ Then for $n$ large enough, the elements in $\lbrace \pi(c_1^{n_1}\ldots c_{3g-2}^{n_{3g-2}}) \ | \ n_i\leq n \rbrace\subseteq Sk(C)$ are linearly dependent over $\Q(A,\overline{\lambda}).$
\end{lemma}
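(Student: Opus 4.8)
The plan is to combine the polynomial dimension bound of Lemma~\ref{lem:dimPoly} with a lower bound on the number of distinct monomials $c_1^{n_1}\cdots c_{3g-2}^{n_{3g-2}}$ that land inside the finite-dimensional space $\mathcal{F}_n$. First I would observe that multiplication in $Sk(\Sigma)$ interacts well with the filtration $(\mathcal{F}_n)$: since each $c_j$ is a fixed element of $Sk(\Sigma)$, it is a fixed $\Q(A)$-linear combination of multicurves, and each of those multicurves has some bounded geometric intersection number $m_j := \max_i I(c_j,\alpha_i)$ with the pants curves $\alpha_i\in\mathcal{C}$. A multicurve appearing in the product of several multicurves $\delta^{(1)},\dots,\delta^{(r)}$ (after resolving all crossings via the Kauffman relations) has geometric intersection number with each $\alpha_i$ at most $\sum_t I(\delta^{(t)},\alpha_i)$, because one can realize the product by superimposing disjoint annular neighbourhoods of the $\delta^{(t)}$ and then resolving, and resolving crossings can only decrease intersection number with a fixed curve $\alpha_i$ that is disjoint from the crossing balls. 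Hence the image $\pi(c_1^{n_1}\cdots c_{3g-2}^{n_{3g-2}})$ lies in $\mathcal{F}_N$ with $N = \sum_{j=1}^{3g-2} n_j m_j$.

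Next I would do the counting. Fix $n$ and consider all tuples $(n_1,\dots,n_{3g-2})$ with $0 \le n_j \le n$; there are $(n+1)^{3g-2}$ of them. For each such tuple, setting $m := \max_j m_j$, we have $\pi(c_1^{n_1}\cdots c_{3g-2}^{n_{3g-2}}) \in \mathcal{F}_{(3g-2)nm} \subseteq \mathcal{F}_{Cn}$ for the constant $C := (3g-2)m$. By Lemma~\ref{lem:dimPoly}, $\dim_{\Q(A,\overline{\lambda})} \mathcal{F}_{Cn} = O((Cn)^{3g-3}) = O(n^{3g-3})$. Since $(n+1)^{3g-2}$ grows strictly faster than $O(n^{3g-3})$, for $n$ large enough the number of vectors $\{\pi(c_1^{n_1}\cdots c_{3g-2}^{n_{3g-2}}) : 0 \le n_j \le n\}$ exceeds the dimension of the ambient space containing them, so they must be linearly dependent over $\Q(A,\overline{\lambda})$. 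This is exactly the claim, since these vectors all lie in $Sk(C)$ (indeed in $\mathcal{F}_{Cn}\subseteq Sk^{\overline\lambda}(C)$, and the statement as written refers to the image in $Sk(C)$, which then maps to the localization).

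The main obstacle I anticipate is making the intersection-number estimate genuinely rigorous, i.e.\ justifying that a product of multicurves, once expressed back in the standard multicurve basis using the Kauffman relations, only involves multicurves whose geometric intersection number with each $\alpha_i$ is subadditive in the factors. Geometric intersection number is defined as a minimum over isotopy representatives, so one must argue that the particular representative obtained by superimposing the factors and resolving crossings (which a priori gives diagrams, not reduced multicurves) has at most the claimed number of intersections with $\alpha_i$, and that passing to the reduced multicurve (removing trivial loops via relation K2, which only changes the class by a scalar) does not increase it. The cleanest way is probably: isotope all the $c_j$-summand multicurves and the $\alpha_i$ simultaneously so that each $\delta^{(t)}$ meets $\alpha_i$ minimally and the $\delta^{(t)}$'s cross each other transversely away from $\alpha_i$; then every crossing-resolution (either smoothing) applied in a small ball disjoint from $\alpha_i$ yields a curve system meeting $\alpha_i$ in at most $\sum_t I(\delta^{(t)},\alpha_i)$ points, and the geometric intersection number of the resulting multicurve class is bounded by this count. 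Everything else is bookkeeping.
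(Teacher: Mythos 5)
Your proposal is correct and follows essentially the same route as the paper: bound the geometric intersection of the product with each pants curve by subadditivity (the paper uses $B=\max_j\sum_i I(c_i,\alpha_j)$ and places all monomials in $\mathcal{F}_{nB}$), then compare the $(n+1)^{3g-2}$ monomials against the $O(n^{3g-3})$ dimension bound from Lemma \ref{lem:dimPoly}. Your extra care about realizing the product by superimposing representatives and resolving crossings away from the $\alpha_i$ is exactly the justification the paper leaves implicit.
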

\begin{proof}
	For a general skein element $c_i\in S(\Sigma)$, written as a  linear combination of multicurves $\gamma_1,\dots,\gamma_k$ with non-zero coefficients, we define the intersection $I(c_i,\alpha_j)$ as the maximum of $I(\gamma_i,\alpha_j)$ among all $\gamma_i$'s.

Now let $B=\max\left(\underset{i=1}{\overset{3g-2}{\sum}}I(c_i,\alpha_j), \ 1\leq j \leq 3g-3 \right).$ We claim that all the elements in $\lbrace \pi(c_1^{n_1}\ldots c_{3g-2}^{n_{3g-2}}) \ | \ n_i\leq  n \rbrace$ are in $\mathcal{F}_{nB}.$ Indeed, one can express them as a linear combination of multicurves in $\Sigma$ by writing each $c_i$ as a linear combination of multicurves, taking the union of the $c_i^{n_i}$ and resolving all crossings. But the union of the curves making up each $c_i^{n_i}$ intersects each $\alpha_j$ at most $nB$ times, so after resolving the crossings we obtain a sum of multicurves in $\mathcal{F}_{nB}.$ 
	
	Now we remark that $\mathrm{dim}(\pi(\mathcal{F}_{nB}))=O(n^{3g-3})$ by Lemma \ref{lem:dimPoly}, so the $(n+1)^{3g-2}$ elements in  $\lbrace \pi(c_1^{n_1}\ldots c_{3g-2}^{n_{3g-2}}) \ | \ n_i\leq n \rbrace$ must be linearly dependent for $n$ large enough.
\end{proof}

\section{Operators on the skein module}
\label{sec:curvop}
In this section we study curve operators, which we will use to produce recurrence relations on skein elements; we first start with some explicit formulas and then study their \emph{minimal shifts}, which will be a key concept in the proof of finiteness.
\subsection{Basis of the skein module of handlebodies}
\label{sec:graph}
\begin{definition}\label{def:coloring}
	A \emph{coloring} of a graph $\Gamma$ is a map $d:\mathcal{E}\ra \Z$, where $\mathcal{E}$ is the set of edges of $\Gamma$. Furthermore, if $v$ is a vertex of $\Gamma$ and $e$ is an adjacent edge, we will set $d(v,e):=d(e_1)+d(e_2)-d(e)$, where $e_1$ and $e_2$ are the edges different from $e$ (but possibly equal to each other) adjacent to $v$.
\end{definition}

\begin{definition}\label{def:admcolor}
	A coloring $d$ of a graph $\Gamma$ is admissible if and only if:
	\begin{itemize}
		\item for any edge $e$, $d(e)\geq 0$;
		\item for any directed vertex $(v,e)$, $d(v,e)\geq 0,$ and $d(v,e)$ is even.
	\end{itemize}
	
	We call $\Delta\subseteq \Z^{\mathcal{E}}$ the polytope of admissible colorings of $\Gamma$. Technically, $\Delta$ is the intersection of a polytope in $\R^{\mathcal{E}}$ with a lattice, but by a slight abuse of terminology we will call it a polytope nonetheless.
\end{definition}

For a banded trivalent graph $\Gamma,$ admissible colorings are in bijective correspondance with the set of multicurves on $\Gamma.$ Indeed, for each edge $e\in \mathcal{E},$ let $a_e$ be the co-core of the corresponding edge in $\Gamma.$  For any multicurve $\gamma$ on $\Gamma,$ we have triangular inequality $I(\gamma,a_i)\leq I(\gamma,a_j)+I(\gamma,a_k)$ for any $i,j,k\in \mathcal{E}$ such that the arcs $a_i,a_j,a_k$ cobound an hexagon in $\Gamma.$ Furthermore, the geometric intersections satisfy that $I(\gamma,a_i)+I(\gamma,a_j)+I(\gamma,a_k)\equiv 0 \mod 2$ for any such triple $(i,j,k).$ Conversely, for any map $d:\mathcal{E}\rightarrow \N$ that satisfy such triangular inequalities and parity condition, there is a unique multicurve $\gamma$ on $\Gamma$ such that $I(\gamma,a_i)=d_i,  \ \forall i \in \mathcal{E}.$ Thus for $d\in \Delta,$ we let $\psi_d$ be the corresponding multicurve.

Let $H$ be a handlebody, then one can view $H$ as the thickening of a banded trivalent graph $\Gamma,$ in many ways. The choice of the graph $\Gamma$ in particular gives a choice of pants decomposition $\mathcal{C}$ of $\Sigma=\partial H,$ by considering the meridians of the thickened edges in $\Gamma\times [0,1] \simeq H.$ The graph $\Gamma\times \lbrace 1 \rbrace \subset \Sigma$ is said \emph{dual} to the pants decomposition $\mathcal{C}.$ For any banded trivalent graph $\Gamma_0$ with $3g-3$ edges, and handlebody $H$ of genus $g,$ there is a pants decomposition $\mathcal{C}$ of $\partial H$ and a trivalent graph $\Gamma$ dual to $\mathcal{C}$ which is isomorphic to $\Gamma_0.$ We will use this fact in Section \ref{sec:curvop-nearadm} to work with trivalent graphs that are lollipop trees.

It is well known that multicurves form a basis of skein algebras of surfaces. Therefore, the family $(\psi_d)_{d\in \Delta}$ form a basis of $Sk(H)\simeq Sk(\Gamma).$

However, we will need to consider another basis of $Sk(H),$ which is also indexed by $\Delta,$ and which will be better behaved for the natural action of $Sk(\Sigma)$ on $Sk(H)$ than the basis of multicurves. For $i\geq 0,$ let $f_i$ denote the $i$-th Jones-Wenzl idempotent, and for any $d\in \Delta,$ let $\varphi_d$ be the element of $Sk(\Gamma)$ obtained from the multicurve $\psi_d$ by inserting $f_{d_i}$ on the edge $i\in \mathcal{E}.$ 

The following well known fact is Lemma 2.1 of \cite{DS25}:

\begin{lemma}\label{lemma:basis}
	$\{ \varphi_c \, | \, c : \mathcal{E} \to \Z \, \, \text{admissible} \}$ is a basis of $Sk(\Gamma,\Q(A))\simeq Sk(H,\Q(A))$.
\end{lemma}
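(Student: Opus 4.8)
The statement to prove is Lemma~\ref{lemma:basis}: that $\{\varphi_c \mid c\in\Delta\}$ is a basis of $Sk(\Gamma)\simeq Sk(H)$. The plan is to deduce it from the already-recalled fact that the multicurves $\{\psi_d\mid d\in\Delta\}$ form a basis, by showing that the change-of-basis matrix from $(\psi_d)$ to $(\varphi_c)$ is ``triangular'' with invertible diagonal entries with respect to a suitable partial order on $\Delta$. Concretely, recall that $\varphi_c$ is obtained from the multicurve $\psi_c$ by inserting the Jones--Wenzl idempotent $f_{c_i}$ on each edge $i\in\mathcal{E}$. The defining recursion for $f_n$ expresses it as the identity on $n$ strands plus a $\Q(A)$-linear combination of planar Temperley--Lieb diagrams that have strictly fewer through-strands; equivalently, $f_n = \mathrm{id}_n + (\text{terms with cups and caps})$.

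The key step is therefore to analyze what happens when one expands all the idempotents $f_{c_i}$ appearing in $\varphi_c$ and resolves the resulting diagram on $\Gamma$ back into a linear combination of multicurves $\psi_d$. Inserting a cup-cap (turnback) on an edge $e$ of $\Gamma$ and then isotoping/reducing in the thickened graph can only decrease the intersection number with the co-core $a_e$ of that edge (it removes a pair of strands there), while along other edges the intersection numbers are unchanged or also weakly decrease after isotopy; loops that bound disks get absorbed into scalars. Hence one obtains an expansion
$$\varphi_c = \psi_c + \sum_{d < c} \mu_{c,d}\,\psi_d,$$
where the sum runs over colorings $d\in\Delta$ that are strictly smaller than $c$ in the partial order given by coordinatewise comparison of the intersection coordinates $d_i = I(\cdot,a_i)$ (or any refinement of it to a total order, e.g.\ by total sum $\sum_i d_i$ followed by lex), and $\mu_{c,d}\in\Q(A)$. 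The leading coefficient is $1$ because the ``identity'' part of each $f_{c_i}$ reproduces exactly $\psi_c$ with coefficient $1$. Since this matrix is unitriangular with respect to a (locally finite, bounded-below) partial order on the index set $\Delta$, it is invertible over $\Q(A)$, so $(\varphi_c)_{c\in\Delta}$ is a basis precisely because $(\psi_d)_{d\in\Delta}$ is.

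I would carry this out in the following order: (1) fix the partial order on $\Delta$ and note that for each $c$ the set $\{d\in\Delta : d\le c\}$ is finite, so unitriangularity does make sense and implies invertibility; (2) recall the Jones--Wenzl recursion / the fact that $f_n = \mathrm{id}_n + (\text{lower through-strand terms})$; (3) prove the local claim that inserting a turnback on an edge of the banded graph $\Gamma$ and reducing yields a skein lying in the span of multicurves with strictly smaller intersection coordinate on that edge and no larger on the others (handling the case of loop edges that may close up to a scalar, and using that nugatory components contribute only the scalar $-A^2-A^{-2}$); (4) assemble these into the displayed unitriangular expansion and conclude.

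The main obstacle I expect is step (3): making precise, in the thickened-graph (handlebody) picture rather than in the plane, that the diagram obtained after inserting cups/caps from the idempotents really does reduce to multicurves of strictly smaller intersection coordinates, and in particular that no ``accidental'' increase of intersection number occurs after the necessary isotopies. This is essentially a careful bookkeeping of the Temperley--Lieb calculus on a trivalent banded graph; it is standard in the skein-theory literature (it underlies the well-definedness of fusion/recoupling), but it is the one place where a genuine geometric argument is needed rather than formal linear algebra. Given the phrase ``well known fact'' in the paper, I would expect the authors to either cite \cite{DS25} directly for the full statement or give precisely this triangularity argument in brief.
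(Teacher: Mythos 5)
Your proposal is correct, but note that the paper does not actually prove this lemma: it is stated as a ``well known fact'' and attributed to Lemma 2.1 of \cite{DS25}, so there is no in-paper argument to compare against. Your unitriangularity argument is precisely the standard proof of that cited fact. The two points that need care are exactly the ones you flag, and both go through: (i) expanding each Jones--Wenzl idempotent as $f_n=\mathrm{id}_n+(\text{turnback terms})$ and inserting a Temperley--Lieb diagram with $k<c_e$ through-strands on edge $e$ produces an embedded $1$-manifold (no crossings to resolve) whose representative meets the co-core $a_e$ in $k$ points and meets every other co-core $a_f$ in at most $c_f$ points, so after discarding nullhomotopic components (each contributing $-A^2-A^{-2}$) the resulting multicurve $\psi_d$ satisfies $d\le c$ coordinatewise with $d_e<c_e$; and (ii) the lower set $\lbrace d\in\Delta \mid d\le c\rbrace$ is finite, so the unitriangular change-of-basis matrix is invertible over $\Q(A)$ and the conclusion follows from the fact that the multicurves $(\psi_d)_{d\in\Delta}$ form a basis. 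The only cosmetic caveat is that strict decrease happens on \emph{some} edge, not necessarily all, so the correct partial order is coordinatewise comparison with $d\ne c$, as you in fact use.
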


\subsection{Curve operators}
\label{sec:curvop-lemmas}

Let $\Sigma$ be the boundary of a handlebody $H\simeq \Gamma \times [0,1].$ The skein module $Sk(H)$ has a natural module structure over $Sk(\Sigma),$ obtained by stacking. 

 For $\gamma \in Sk(\Sigma),$ we will call $T^{\gamma}$ the endomorphism of $Sk(H)$ induced by stacking $\gamma.$ The operator $T^{\gamma}$ is called the \emph{curve operator} associated to the skein element $\gamma \in Sk(\Sigma).$ 
 
 We will need to express the action of some special simple closed curves on the basis $\varphi_c.$ For any $e\in \mathcal{E},$ let $\alpha_e$ be the simple closed curve that encircles the edge $e$ of $\Gamma,$ and let $\beta_e$ be the curve represented on Figure \ref{fig:curves}.
  
 \begin{figure}[h]
 	\centering
 	\def \svgwidth{.9\columnwidth}
 	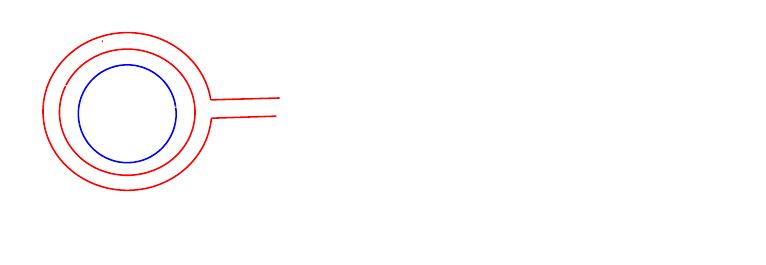
 	\caption{Left: the curves $\alpha_e$ and $\beta_e$ associated to a loop edge $e$. Middle: the curve $\beta_e$ associated to a non-loop edge. Right: the curve $\beta_{\gamma}$ associated to an arc $\gamma$ composed of the $3$ non-loop edges $e_1,e_2,e_3.$ The surface $\Sigma$ is not represented but corresponds to the boundary of a regular neighborhood of the graph $\Gamma,$ represented in red.}
 	\label{fig:curves}
 \end{figure}
 
 For $e\in \mathcal{E},$ we let $\delta_e$ be the Kronecker symbol, e.g. the map such that $\delta_e(f)=0$ if $e\neq f$ and $\delta_e(e)=1.$
\begin{lemma}\label{lemma:curveActions}  Let $c:\mathcal{E} \longmapsto \Z$ be admissible, then we have
	$$T^{\alpha_e}\varphi_c=(-A^{2+2c_e}-A^{-2-2c_e})\varphi_c$$
	If $e$ is a loop edge, and $f$ is the unique edge adjacent to $e,$ then:
	$$T^{\beta_e}\varphi_c=\varphi_{c+\delta_e}+F_{-1}(A,,A^{c_e},A^{c_f})\varphi_{c-\delta_e}$$
	 where $F_{-1}\in \Q(A,X_1,X_2).$ Moreover $F_{-1}(A,A^{c_e},A^{c_f})\neq 0$ if $c-\delta_e$ is admissible.

	If however $e$ is not a loop edge, then:
	\begin{multline*}T^{\beta_e}\varphi_c=R_2(A,A^{c_e},A^{c_{f_1}},A^{c_{f_2}},A^{c_{f_3}},A^{c_{f_4}})\varphi_{c+2\delta_e}
		\\ + R_0(A,A^{c_e},A^{c_{f_1}},A^{c_{f_2}},A^{c_{f_3}},A^{c_{f_4}})\varphi_c + R_{-2}(A,A^{c_e},A^{c_{f_1}},A^{c_{f_2}},A^{c_{f_3}},A^{c_{f_4}})\varphi_{c-2\delta_e}
		\end{multline*}
	 where $f_1,f_2,f_3,f_4$ are the edges adjacent to $e$ (possibly with repetition) and $R_2$, $R_0$, $R_{-2}$ are in $\Q(A,X_1,\ldots, X_5).$
	 Moreover, $R_2(A,A^{c_e},A^{c_{f_1}},\ldots,A^{c_{f_4}})$ (respectively $R_{-2}(A,A^{c_e},A^{c_{f_1}},\ldots,A^{c_{f_4}})$) is non-zero if $c+2\delta_e$ (respectively $c-2\delta_e$) is also admissible.
\end{lemma}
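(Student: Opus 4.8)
The plan is to establish all three identities by direct computations in the Temperley--Lieb recoupling calculus, using only the standard moves: the \emph{encircling formula}, the \emph{fusion identity} rewriting $k$ parallel strands as a sum over admissible colours of trivalent graphs carrying Jones--Wenzl idempotents, the \emph{bubble/theta evaluation}, and the \emph{recoupling ($6j$-)move} relating the two fusion patterns at a four-valent configuration. For the first formula, stacking $\alpha_e$ onto $\Gamma$ and pushing it into $H$ turns it into a $0$-framed loop encircling the $f_{c_e}$-coloured edge $e$ of $\varphi_c$; closing this configuration along $e$ yields the Hopf link coloured $(1,c_e)$, whose value divided by the loop value of $f_{c_e}$ equals $-(A^{2c_e+2}+A^{-2c_e-2})$, so $T^{\alpha_e}\varphi_c=(-A^{2+2c_e}-A^{-2-2c_e})\varphi_c$ immediately.

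For the loop edge case I would isotope $\beta_e$ so that it meets the co-core $a_e$ exactly once; pushed into $H$ it then contributes a single strand running parallel to the $f_{c_e}$-coloured loop edge $e$, with its two ends joined by a short arc near the trivalent vertex $v$ at which $e$ closes onto $f$ (Figure~\ref{fig:curves}, left). Applying $f_1\otimes f_{c_e}=f_{c_e+1}\oplus f_{c_e-1}$ along the parallel stretch and then clearing the resulting bigon at $v$ by a single recoupling move gives $T^{\beta_e}\varphi_c=\Lambda_{+1}\varphi_{c+\delta_e}+\Lambda_{-1}\varphi_{c-\delta_e}$, where each $\Lambda_{\pm1}=F_{\pm1}(A,A^{c_e},A^{c_f})$ for a rational function $F_{\pm1}\in\Q(A,X_1,X_2)$ built from a fusion coefficient and a tetrahedron coefficient with entries among $\{1,c_e,c_e\pm1,c_f\}$. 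A direct computation of the top branch gives $F_{+1}\equiv 1$, so $F_{-1}$ is the claimed coefficient; its non-vanishing for $c-\delta_e$ admissible is part of the last step below.

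For a non-loop edge $e$, I would isotope $\beta_e$ so that it meets $a_e$ exactly twice, so that it contributes two parallel strands along $e$, closed up near the two endpoints of $e$ beyond the vertices carrying the pairs $(f_1,f_2)$ and $(f_3,f_4)$ (Figure~\ref{fig:curves}, middle). Fusing the two parallel strands via $f_1\otimes f_1=f_0\oplus f_2$, the $f_0$-summand leaves the colour on $e$ equal to $c_e$ while the $f_2$-summand combines with $f_{c_e}$ to give $c_e+2$, $c_e$ or $c_e-2$; re-expressing each of the resulting graphs in the basis $(\varphi_c)$ then costs one recoupling move at each endpoint of $e$, hence tetrahedron coefficients with entries among $\{1,2,c_e,c_e\pm1,c_e\pm2,c_{f_1},\dots,c_{f_4}\}$. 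Collecting terms yields $T^{\beta_e}\varphi_c=R_2\varphi_{c+2\delta_e}+R_0\varphi_c+R_{-2}\varphi_{c-2\delta_e}$ with $R_j=R_j(A,A^{c_e},A^{c_{f_1}},\dots,A^{c_{f_4}})$ for rational functions $R_j\in\Q(A,X_1,\dots,X_5)$, as claimed.

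It remains to prove that the extreme coefficients $F_{-1}$, $R_2$, $R_{-2}$ do not vanish under the stated admissibility hypotheses, and I expect this to be the main obstacle. Up to an explicit nonzero monomial in $A$ and in the $A^{c_i}$, each of them is a product of theta coefficients (and inverses thereof) with one or two tetrahedron coefficients, all of whose arguments form admissible triples at every vertex precisely because $c$ \emph{and} $c\pm\delta_e$ (respectively $c\pm2\delta_e$) are admissible --- this is exactly where the hypothesis enters. Working over $\Q(A)$, it then suffices to show that, after substituting $X_i=A^{c_i}$, each such product is not the zero function of $A$, and I would verify this by computing its leading term in $A$: the classical product formula for the theta coefficient $\theta(a,b,c)$ exhibits it as a Laurent polynomial in $A$ with monomial leading term for every admissible triple, while the known leading asymptotics of the tetrahedron coefficient under the hexagon (admissibility) inequalities are likewise monomial, so the product carries a nonzero monomial leading term. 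The delicate part of the argument is the bookkeeping: one must check that admissibility of $c\pm\delta_e$ (resp.\ $c\pm2\delta_e$) really is \emph{exactly} what forces all triples at the one or two relevant recoupling vertices to be admissible and keeps the leading-term computation monomial, since a tetrahedron coefficient can degenerate for certain admissible inputs; this requires a careful analysis of the triangle and parity conditions at each vertex involved.
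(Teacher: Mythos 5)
Your computational skeleton is essentially the paper's: stack the curve, apply the encircling formula for $\alpha_e$, and reduce $\beta_e$ by fusion and recoupling moves, tracking which colour shifts can occur. The only structural difference in the non-loop case is the order of operations --- you fuse the two parallel strands of $\beta_e$ with each other first via $f_1\otimes f_1=f_0\oplus f_2$, whereas the paper merges each strand into the edge $e$ one at a time; both routes give shifts in $\lbrace 0,\pm 2\delta_e\rbrace$ with coefficients rational in $A$ and the $A^{c_i}$, so this is a matter of taste.

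The place where your proposal is genuinely thinner than the paper is the non-vanishing of $F_{-1}$ and $R_{\pm 2}$, which you correctly identify as the crux but leave as a plan. Two remarks. First, the structural fact you need --- that for the \emph{extreme} shifts the coefficient is a single product of recoupling data rather than a sum over intermediate channels --- is exactly what the paper isolates: to reach $c\pm 2\delta_e$ both merges must go the same way, so no cancellation between channels is possible, and the only residual summation is the two-term sum over the intermediate colour $a\pm 1$ at each endpoint of $e$ (the paper's ``come-back pattern'', which in your language is a tetrahedron with one edge coloured $2$). Second, your proposed argument via ``known leading asymptotics of the tetrahedron coefficient'' is riskier than necessary: for general admissible colourings the leading term of the alternating Racah sum can cancel, so that claim needs qualification. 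The paper avoids this entirely because every coefficient in sight involves a strand coloured $1$: the merge coefficients are $M_+=1$ and $M_-=-\lbrace n+1\rbrace/\lbrace n\rbrace$, the triangle coefficients $T(a,b,c,\varepsilon,\eta)$ are single ratios of quantum integers that are manifestly nonzero whenever both triples $(a,b,c)$ and $(a+\varepsilon,b+\eta,c)$ are admissible, and the two-term come-back sum is shown not to cancel by the asymmetry between $t(a,1)=A^a$ and $t(a,-1)=-A^{-(a+2)}$ under $A\mapsto A^{-1}$. If you carry out your plan, restrict it to these explicit small coefficients (tets with an edge coloured $1$ or $2$) rather than invoking general tetrahedron asymptotics; with that restriction, and the admissibility bookkeeping you describe (for instance, in the loop-edge case admissibility of $c-\delta_e$ forces $c(e)-c(f)/2\geq 1$, which is precisely what makes $\lbrace c(e)-c(f)/2\rbrace\neq 0$), the argument closes.
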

\begin{remark}
	The above expressions do not have meaning at first glance if they contain a $\varphi_{c'}$ with $c'$ not admissible. However, they have to be interpreted with the convention that $\varphi_c=0$ is $c$ is not admissible. In that case, the coefficient in front of $\varphi_c$ may be chosen arbitrarily.  
\end{remark}

For $\gamma$ an arc in $\Gamma,$ formed by consecutive non-loop edges $e_1,\ldots,e_n \in \mathcal{E},$ we define a curve $\beta_{\gamma}$ in Figure \ref{fig:curves}.
\begin{lemma}\label{lemma:longcurveactions}  Let $c:\mathcal{E} \longmapsto \Z$ be admissible, let $\delta$ be an arc in $\Gamma$ composed of edges $e_1,\dots,e_n$, and let $\beta_{\gamma}$ be the associated curve. Then we have
	$$T^{\beta_\gamma}\varphi_c=\sum F_{c'}\varphi_{c'}$$
	
	where $c'$ runs over the colors $c+\sum_i\epsilon_i\delta_{e_i}$ for all choices of $\epsilon_i\in\{-2,0,2\}$ and $F_{c'}$ is a rational fraction in $A$ and $A^{c(e)}$ for $e\in \mathcal{E}.$ 
	Moreover, for $\epsilon \in \lbrace \pm 2 \rbrace^n,$ one has that $F_{c'}\neq 0$ if $c'$ is admissible.
\end{lemma}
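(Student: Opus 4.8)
The plan is to build the curve $\beta_\gamma$ edge by edge and track how the curve operator $T^{\beta_\gamma}$ acts on the basis $(\varphi_c)$ by decomposing $\beta_\gamma$ near each edge $e_i$ into Jones--Wenzl idempotents. Concretely, $\beta_\gamma$ runs parallel to the arc $\gamma$ and, near each non-loop edge $e_i$, it contributes two parallel strands running alongside the $f_{c_{e_i}}$-colored strand of $\varphi_c$. The first step is the local computation at a single non-loop edge: this is exactly the content of the last displayed formula of Lemma \ref{lemma:curveActions}, where $T^{\beta_e}\varphi_c$ is a combination of $\varphi_{c+2\delta_e}$, $\varphi_c$ and $\varphi_{c-2\delta_e}$ with coefficients that are rational fractions in $A$ and the $A^{c(f)}$ for $f$ adjacent to $e$, and where the extreme coefficients $R_{\pm 2}$ are non-zero whenever $c\pm 2\delta_e$ is admissible. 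I would then argue that the global operator $T^{\beta_\gamma}$ factors (in the appropriate sense) as a product of these local moves: resolving the two strands of $\beta_\gamma$ into Jones--Wenzl components edge by edge along $e_1,\dots,e_n$ shows that each $\epsilon_i\in\{-2,0,2\}$ is selected independently at $e_i$, so the output is a sum over all $c'=c+\sum_i\epsilon_i\delta_{e_i}$ as claimed, with $F_{c'}$ a rational fraction in $A$ and the variables $A^{c(e)}$, $e\in\mathcal E$.

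The key technical point is the non-vanishing statement: for $\epsilon\in\{\pm 2\}^n$, one needs $F_{c'}\neq 0$ whenever $c'$ is admissible. I would obtain this by showing that $F_{c'}$, for such a "pure" choice of signs, is (up to a non-zero scalar depending only on $A$) the product of the corresponding extreme local coefficients. The subtlety — and what I expect to be the main obstacle — is that when one applies the local move at $e_i$ the relevant adjacent colors $c_{f_j}$ may have already been shifted by the earlier moves at $e_1,\dots,e_{i-1}$; one must check that the intermediate colors occurring in this iterated resolution are themselves admissible, so that each local extreme coefficient is genuinely non-zero at the relevant point. For a pure sign vector $\epsilon\in\{\pm 2\}^n$ this should follow because the intermediate colorings interpolate "monotonically" between $c$ and the admissible endpoint $c'$: one can choose the order of resolution (say, along the arc from one end to the other) so that at each stage the partially-shifted coloring still satisfies the triangle and parity inequalities defining $\Delta$. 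Establishing this admissibility of intermediate colorings is the crux; once it is in place, the product formula gives $F_{c'}\neq 0$ directly from the non-vanishing clauses in Lemma \ref{lemma:curveActions}.

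Finally, I would record that the mixed cases (some $\epsilon_i=0$) need no non-vanishing claim, and that the statement that $F_{c'}$ lies in $\Q(A,(A^{c(e)})_{e\in\mathcal E})$ follows because each local resolution introduces only quantum-integer denominators $[n]_A$ and numerators polynomial in the $A^{\pm c(e)}$, all of which are of this form; composing finitely many such moves preserves the class of such rational fractions. This wraps up the proof modulo the careful bookkeeping of intermediate admissibility, which is the one place real care is required.
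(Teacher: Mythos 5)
Your overall strategy is the same as the paper's (fuse $\beta_\gamma$ into the graph locally at each edge, observe that for a pure sign vector $\epsilon\in\{\pm2\}^n$ the coefficient $F_{c'}$ is a single product of local factors, and reduce non-vanishing to local non-vanishing plus admissibility of intermediate colorings), and you have correctly identified the crux. But the way you propose to resolve the crux does not work. The intermediate colorings in your scheme are of the form ``$e_1,\dots,e_i$ fully shifted by $\pm2$, the remaining edges untouched,'' and these need \emph{not} be admissible no matter how you order the edges: take two consecutive edges $e_i,e_{i+1}$ of the arc meeting at a vertex whose third edge $f_i$ has $c(f_i)=0$ and $c(e_i)=c(e_{i+1})=a$, with $\varepsilon_i=\varepsilon_{i+1}=+2$. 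Both $c$ and $c'$ are admissible at that vertex (triples $(a,a,0)$ and $(a+2,a+2,0)$), but shifting either edge alone produces the triple $(a+2,a,0)$ or $(a,a+2,0)$, which violates the triangle inequality. So there is no ``monotone'' ordering of full edge-shifts that stays inside $\Delta$, and the claimed product of the extreme coefficients $R_{\pm2}$ of Lemma \ref{lemma:curveActions} is not available. Relatedly, $T^{\beta_\gamma}$ is not the composition of the single-edge operators $T^{\beta_{e_i}}$ ($\beta_\gamma$ is one curve, not a stack of the $\beta_{e_i}$), so $F_{c'}$ is not a product of the single-edge coefficients in any case.

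The paper's proof avoids this by merging \emph{one} strand of $\beta_\gamma$ at a time: the merge rule shifts each color by $\pm1$, so the intermediate coloring that actually occurs at each internal vertex of the arc is the half-shifted triple $(c(f_i),c(e_i)+\varepsilon_i,c(e_{i+1})+\varepsilon_{i+1})$ with $\varepsilon\in\{\pm1\}$ --- i.e.\ the midpoint of the triples of $c$ and $c'$ --- which is admissible by convexity of the triangle inequalities (and parity is preserved since $\varepsilon_i+\varepsilon_{i+1}$ is even). The non-vanishing then reduces to the elementary fact that $T(a,b,c,\varepsilon,\eta)\neq0$ whenever $(a,b,c)$ and $(a+\varepsilon,b+\eta,c)$ are admissible, applied twice per internal vertex, together with non-vanishing of the merge coefficients $M_\pm$, the half-twist coefficients $\pm A^k$, and the come-back coefficients $F_{\varepsilon,\varepsilon}$ of Lemma \ref{lemma:come-back} at the two endpoints of the arc. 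If you replace your ``full shift per edge'' bookkeeping by this ``half shift per strand'' bookkeeping, your argument goes through; as written, the intermediate-admissibility step is false.
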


To prove Lemmas \ref{lemma:curveActions} and \ref{lemma:longcurveactions} we will need to explain fusion rules, which permit to compute the action of curve operators on the basis $\varphi_c$ of $Sk(H).$ Given a ball $B^3,$ a uni-trivalent graph $\Gamma$ embedded in $B^3,$ such that the univalent vertices are on the boundary of $B^3,$ and an admissible coloring $c$ of $\Gamma$, one may define an element $(\Gamma,c)$ of the relative skein module $Sk(B^3,\partial \Gamma)$ by the same process as the definition of vectors $\varphi_{c}.$ Fusion rules, which were originally computed in \cite{MV94}, give relations between different colored trivalent graphs. They are represented in Figure \ref{fig:fusion}. The coefficients appearing in those equations are as follows. For $n\in \N,$ we set $\lbrace n \rbrace:=A^{2n}-A^{-2n}.$ We remark that $\lbrace n \rbrace\neq 0$ if $n\neq 0.$ Then

\begin{gather*}
	M_+(n)=1, M_-(n)=-\frac{\lbrace n+1\rbrace }{\lbrace n \rbrace}, \ t(n,1)=A^n, \ t(n,-1)=-A^{-(n+2)}
	\\ T(a,b,c,1,1)=1, \ T(a,b,c,1,-1)=\frac{\lbrace \frac{a-b+c}{2}\rbrace}{\lbrace c \rbrace}, \ T(a,b,c,-1,1)=\frac{\lbrace \frac{c-a+b}{2}\rbrace}{\lbrace b \rbrace},
	\\ T(a,b,c,-1,-1)=-\frac{\lbrace \frac{a+b+c}{2}+1\rbrace \lbrace \frac{b+c-a}{2}\rbrace}{\lbrace b \rbrace \lbrace c \rbrace}.
\end{gather*}
We note that for the twist rule, we did not display the negative half twists. There the coefficients are obtained from $t(n,\pm 1)$ by changing $A$ to $A^{-1}.$

\begin{figure}[h]
	\centering
	\def \svgwidth{.5\columnwidth}
	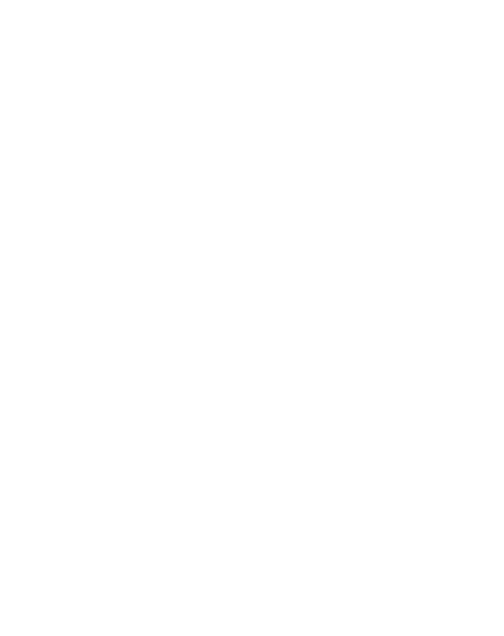
	\caption{The fusion rules. Black edges are colored by $1$ and $\varepsilon,\eta\in \lbrace \pm 1 \rbrace.$ The first, second and third equations will be refered to as the merge rule, the half-twist rule and the triangle rule. All colorings are admissible.}
	\label{fig:fusion}
\end{figure}
We will need the following in order to establish Lemmas \ref{lemma:curveActions} and \ref{lemma:longcurveactions}:
\begin{lemma}
	\label{lemma:come-back}
	We have for $(a,b,c)$ an admissible triple and $\varepsilon,\eta\in \lbrace \pm 1\rbrace:$
	\begin{center}
		\def\svgwidth{0.58\columnwidth}
\begingroup%
  \makeatletter%
  \providecommand\color[2][]{%
    \errmessage{(Inkscape) Color is used for the text in Inkscape, but the package 'color.sty' is not loaded}%
    \renewcommand\color[2][]{}%
  }%
  \providecommand\transparent[1]{%
    \errmessage{(Inkscape) Transparency is used (non-zero) for the text in Inkscape, but the package 'transparent.sty' is not loaded}%
    \renewcommand\transparent[1]{}%
  }%
  \providecommand\rotatebox[2]{#2}%
  \newcommand*\fsize{\dimexpr\f@size pt\relax}%
  \newcommand*\lineheight[1]{\fontsize{\fsize}{#1\fsize}\selectfont}%
  \ifx\svgwidth\undefined%
    \setlength{\unitlength}{236.13410289bp}%
    \ifx\svgscale\undefined%
      \relax%
    \else%
      \setlength{\unitlength}{\unitlength * \real{\svgscale}}%
    \fi%
  \else%
    \setlength{\unitlength}{\svgwidth}%
  \fi%
  \global\let\svgwidth\undefined%
  \global\let\svgscale\undefined%
  \makeatother%
  \begin{picture}(1,0.47158267)%
    \lineheight{1}%
    \setlength\tabcolsep{0pt}%
    \put(0,0){\includegraphics[width=\unitlength,page=1]{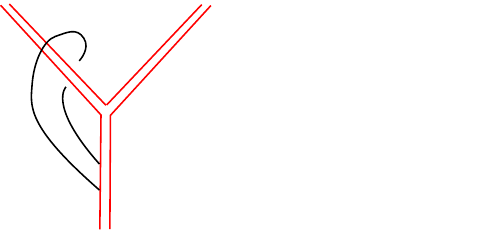}}%
    \put(0.04019037,0.45871262){\color[rgb]{1,0,0}\makebox(0,0)[lt]{\lineheight{1.25}\smash{\begin{tabular}[t]{l}$a$\end{tabular}}}}%
    \put(0.42287982,0.40919799){\color[rgb]{1,0,0}\makebox(0,0)[lt]{\lineheight{1.25}\smash{\begin{tabular}[t]{l}$b$\end{tabular}}}}%
    \put(0.23740966,0.19351565){\color[rgb]{1,0,0}\makebox(0,0)[lt]{\lineheight{1.25}\smash{\begin{tabular}[t]{l}$c$\end{tabular}}}}%
    \put(0.23740966,0.11043183){\color[rgb]{1,0,0}\makebox(0,0)[lt]{\lineheight{1.25}\smash{\begin{tabular}[t]{l}$c+\varepsilon$\end{tabular}}}}%
    \put(0.23489196,0.03154395){\color[rgb]{1,0,0}\makebox(0,0)[lt]{\lineheight{1.25}\smash{\begin{tabular}[t]{l}$c+\varepsilon+\eta$\end{tabular}}}}%
    \put(0.31629746,0.19687256){\color[rgb]{0,0,0}\makebox(0,0)[lt]{\lineheight{1.25}\smash{\begin{tabular}[t]{l}$=F_{\varepsilon,\eta}(A,A^a,A^b,A^c)$\end{tabular}}}}%
    \put(0,0){\includegraphics[width=\unitlength,page=2]{come-back-pattern.pdf}}%
    \put(0.58640922,0.45346207){\color[rgb]{1,0,0}\makebox(0,0)[lt]{\lineheight{1.25}\smash{\begin{tabular}[t]{l}$a$\end{tabular}}}}%
    \put(0.96909858,0.40394745){\color[rgb]{1,0,0}\makebox(0,0)[lt]{\lineheight{1.25}\smash{\begin{tabular}[t]{l}$b$\end{tabular}}}}%
    \put(0.77859319,0.1026634){\color[rgb]{1,0,0}\makebox(0,0)[lt]{\lineheight{1.25}\smash{\begin{tabular}[t]{l}$c+\varepsilon+\eta$\end{tabular}}}}%
  \end{picture}%
\endgroup%

	\end{center}
	where $F_{\varepsilon,\eta}\in \Q(A,X_1,X_2,X_3)$ and $F_{\varepsilon,\varepsilon}(A,A^a,A^b,A^c)\neq 0$ if $(a,b,c+2\varepsilon)$ is admissible.
\end{lemma}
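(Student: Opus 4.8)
The statement is a local identity between two elements of the relative skein module $Sk(B^3,\partial\Gamma)$ of a ball: on the left, a trivalent vertex colored $(a,b,c)$ together with a $1$-colored arc that leaves the $c$-edge, makes an excursion past the vertex and ``comes back'' to the $c$-edge, raising its color first by $\varepsilon$ and then by $\eta$; on the right, the bare trivalent vertex colored $(a,b,c+\varepsilon+\eta)$. The plan is to reduce the left diagram to a scalar multiple of the right one by a sequence of applications of the fusion rules of Figure \ref{fig:fusion}, and to take $F_{\varepsilon,\eta}$ to be the resulting product of coefficients.

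Concretely, I would first isotope the $1$-colored arc so that it runs alongside the edges it meets, and then absorb it one step at a time, following the two color jumps $c\to c+\varepsilon\to c+\varepsilon+\eta$ visible in the picture. Where the arc runs parallel to an edge of $\Gamma$, the merge rule fuses it into that edge at the cost of a factor $M_{\pm}(\cdot)$, while each crossing of the arc with an edge of $\Gamma$ contributes a half-twist factor $t(\cdot,\pm 1)$. Once the $1$-strand has been brought into a small triangle attached to two of the three edges at the vertex, the triangle rule replaces it by the bare vertex, now colored $(a,b,c+\varepsilon+\eta)$, times a coefficient $T(\cdots)$, and any leftover $1$-colored bigon is removed by the corresponding specialization of the triangle rule. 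Multiplying together the factors produced along the way gives $F_{\varepsilon,\eta}$; since each of $M_{\pm}(n)$, $t(n,\pm 1)$ and $T(\cdots)$ is a rational function of $A$, $A^a$, $A^b$, $A^c$, and every color occurring in the reduction is a $\Z$-linear combination of $a,b,c$, we get $F_{\varepsilon,\eta}\in\Q(A,X_1,X_2,X_3)$.

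For the non-vanishing assertion I would specialize to $\eta=\varepsilon$ and read off from the reduction exactly which coefficients enter $F_{\varepsilon,\varepsilon}$. Each such factor is, up to a sign and a power of $A$, a ratio of quantum integers $\{m\}$ whose indices $m$ are explicit integers built from $a,b,c$, such as $\frac{a+b+c}{2}+1$, $\frac{b+c-a}{2}$, $\frac{a-b+c}{2}$, or a color such as $c+\varepsilon$. A short case analysis shows that the joint admissibility of $(a,b,c)$ and of $(a,b,c+2\varepsilon)$ forces each such index to be a nonzero integer, and in particular, when $\varepsilon=-1$, forces $c\ge 2$ so that no $\{0\}$ occurs in a denominator. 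Since $\{m\}\neq 0$ whenever $m\neq 0$, every factor is a nonzero element of $\Q(A)$, and therefore so is $F_{\varepsilon,\varepsilon}(A,A^a,A^b,A^c)$.

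The main obstacle is the bookkeeping rather than any individual move: one must order the absorption of the $1$-strand so that no inadmissible color is created in an essential way, and then track the quantum-integer factors carefully enough to carry out the non-vanishing check. The crux is to verify that the potentially dangerous coefficients — an $M_-$ evaluated at $0$, or a $T$-coefficient with a vanishing numerator — never appear once $(a,b,c+2\varepsilon)$ is assumed admissible; this is precisely where that hypothesis is used.
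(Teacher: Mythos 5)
Your overall reduction strategy (fuse the $1$-colored arc into the graph via the merge, half-twist and triangle rules and collect the coefficients) is the same as the paper's, but your non-vanishing argument has a genuine gap: you treat $F_{\varepsilon,\varepsilon}$ as a \emph{product} of individually non-zero quantum-integer ratios, whereas it is in fact a \emph{sum}. The merge rule of Figure \ref{fig:fusion} does not produce a single factor $M_{\pm}$: it replaces the picture by $M_+(n)$ times one diagram \emph{plus} $M_-(n)$ times another. In the come-back pattern the black arc must be merged with the edge colored $a$ in order to remove the two crossings, and this merge branches into the two colors $a\pm 1$. After applying two half-twist rules and two triangle rules one obtains
$$F_{\varepsilon,\eta}=\sum_{\delta\in\{\pm 1\}} t(a,\delta)^2\,T(b,a,c,\delta,\varepsilon)\,T(b,a+\delta,c+\varepsilon,-\delta,\eta),$$
a sum of two products. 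Knowing that each factor in each summand is non-zero does not show that the two summands do not cancel, and this non-cancellation is precisely the crux of the lemma; your proposal never addresses it.

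The paper closes this gap either by direct verification for each choice of $(\varepsilon,\eta)$ or by a symmetry argument: every $T$-coefficient and every $M_-$ is a ratio of equal numbers of quantum integers $\lbrace n\rbrace=A^{2n}-A^{-2n}$ and hence is invariant under $A\mapsto A^{-1}$, while $t(a,1)^2=A^{2a}$ and $t(a,-1)^2=A^{-2(a+2)}$ are not; a cancellation of the two summands would therefore force an identity of the form $A^{8a+8}=1$ in $\Q(A)$, which is impossible. Your observation that admissibility of $(a,b,c)$ and $(a,b,c+2\varepsilon)$ keeps every individual index non-zero is still needed (it guarantees each summand is non-zero and well defined), but it must be supplemented by an argument of this kind before you may conclude that $F_{\varepsilon,\varepsilon}(A,A^a,A^b,A^c)\neq 0$.
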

We will call the pattern on the left hand side of Lemma \ref{lemma:come-back} a \emph{come-back pattern}.
\begin{proof}
	This is achieved by a straightforward application of the fusion rules. First we merge the black loop with the edge colored by $a$; thus we can remove the two crossings with two half-twist moves. The resulting graph will have two concentric triangular faces that can be removed with two triangle rules. In formulas, we have that $C(a,b,c,\epsilon,\eta)$ is equal to
	
	$$\left(\sum_{\delta\in\{\pm 1\}} t(a,\delta)^2T(b,a,c,\delta,\epsilon)T(b,a+\delta,c+\epsilon,-\delta,\eta)\right)Y(a,b,c+\epsilon+\eta)$$
	
	where $C(a,b,c,\epsilon,\eta)$ is the come-back pattern and $Y(a,b,c)$ is the vertex colored by $a,b,c+\epsilon+\eta$. The fact that the sum does not cancel out can be either verified directly for each choice of $\epsilon,\eta$ or deduced from the fact that $t(a,1)=A^a,t(a,-1)=A^{-(a+2)}$ and everything else is symmetric under $A\ra A^{-1}$.
\end{proof}
\begin{proof}[Proof of Lemmas \ref{lemma:curveActions} and \ref{lemma:longcurveactions}]
	We note that curves $\beta_e$ where $e$ is a non-loop edge are a special case of curves of the type $\beta_{\gamma},$ so there will be $3$ types of curves to consider to prove both lemmas: curves $\alpha_e,$ curves $\beta_e$ where $e$ is a loop edge, and curves $\beta_{\gamma}$ where $\gamma$ is an arc of consecutive non-loop edges.
	
	For the curves $\alpha_e,$ one has $T^{\alpha_e}\varphi_c=(-A^{2+2c(e)}-A^{-2-2c(e)})\varphi_c,$ by the basic properties of Jones-Wenzl idempotents.
	
	In the case where $e$ is a loop edge, one can compute $T^{\beta_e}\varphi_c$ by first merging $\beta_e$ with $e,$ then simplifying the resulting triangles. Let $f\neq e$ be the edge adjacent to $e.$ One gets that $T^{\beta_e}\varphi_{c}$ is equal to
	\begin{multline*}M_+(c(e))T(c(f),c(e),c(e),1,1)\varphi_{c+\delta_e}+M_-(c(e))T(c(f),c(e),c(e),-1,-1)\varphi_{c-\delta_e}
		\\ =\varphi_{c+\delta_e}+ \frac{ \lbrace c(e)+\frac{c(f)}{2}+1 \rbrace \lbrace c(e)-\frac{c(f)}{2} \rbrace}{\lbrace c(e)\rbrace \lbrace c(e)+1\rbrace}\varphi_{c-\delta_e}.
		\end{multline*}
	It is then clear that the coefficient in $\varphi_{c+\delta_e}$ is non-zero, and the coefficient in $\varphi_{c-\delta_e}$ is too, since if $c-\delta_e$ is admissible then $c(e)-\frac{c(f)}{2}>0.$
	
	Finally, let $e_1,\ldots,e_n$ be a sequence of consecutive non-loop edges, forming an arc $\gamma.$ Let $f_i$ be the third edge adjacent to the trivalent vertex common to $e_i$ and $e_{i+1}.$ Similarly, let $f_0,f_0',f_n,f_n'$ be the edges adjacent to the endpoints of $\gamma,$ with $f_0,f_0'$ adjacent to $e_1$ and $f_n,f_n'$ adjacent to $e_n$ (we don't require them to be different). To compute $T^{\beta_{\gamma}}\varphi_c,$ one uses the merging rule twice for each edge $e_i$, then, possibly after applying some half-twist rules, one is left with getting rid of $2$ triangles supported by edges $i$ and $i+1$ for $1\leq i \leq n-1,$ plus a come-back pattern for each extremity of $\gamma.$ At the end, $T^{\beta_{\gamma}}\varphi_c$ will be a linear combination of the $\varphi_{c'},$ where $c'=c+\sum \varepsilon_i \delta_{e_i}$ for some $\varepsilon_i \in \lbrace 0, \pm 2 \rbrace.$ Indeed, merging one strand colored by $1$ shifts the color by $\pm 1.$	
	Furthermore, it is clear by the fusion rules and by Lemma \ref{lemma:come-back} that the coefficient $F_{c'}$ is a rational fraction in $A$ and the $A^{c(e)}$'s for $e\in \mathcal{E}.$ (in fact, $e=e_i$ for some $i,$ or $e$ adjacent to some $e_i$)
	
	Let us focus on the coefficient in $c'=c+\sum \varepsilon_i \delta_{e_i}$ when $\varepsilon_i\in \lbrace \pm 2 \rbrace$ for all $1\leq i \leq n,$ assuming that $c'$ is admissible. Then one needs to merge twice with the same sign at each edge to get $\varphi_c',$ hence $F_{c'}$ is a sum of one term. More precisely,

	\begin{multline*}F_{c'}=\left(\underset{1\leq i \leq n}{\prod}M_{\varepsilon_i}(c(e_i))\right)		 
		\\ \times \left(\underset{1\leq i \leq n-1}{\prod}T(c(f_i),c(e_i),c(e_{i+1}),\varepsilon_i,\varepsilon_{i+1})T(c(f_i),c(e_i)+\varepsilon_i,c(e_{i+1})+\varepsilon_{i+1},\varepsilon_i,\varepsilon_{i+1})\right)
		\\ \times F_{\varepsilon_1,\varepsilon_1}(A,A^{c(f_0)},A^{c(f_{0}')},A^{c(e_1)})F_{\varepsilon_n,\varepsilon_n}(A,A^{c(f_n)},A^{c(f_{n}')},A^{c(e_n)}),
\end{multline*}
	possibly up to half-twist coefficients, which are of the form $\pm A^k,$ hence non-zero. 
	Note that, for any $\varepsilon \in \lbrace \pm 1 \rbrace,$ the triangle coefficient $T(a,b,c,\varepsilon,\eta)$ is non-zero whenever $(a,b,c)$ and $(a+\varepsilon,b+\eta,c)$ are admissible triples. It is clear when $\varepsilon=\eta=+1.$ When $\varepsilon=\eta=-1$ we have $b+c-a\geq 2$ because $(a,b-1,c-1)$ is admissible, so it is true also. The other two cases are similar. 
	This implies that the triangle coefficients in $F_{c'}$ are all non-zero: indeed if $(c(e_i),c(e_{i+1}),c(f_i))$ and $(c(e_i)+2\varepsilon_i,c(e_{i+1})+2\varepsilon_{i+1},c(f_i))$ are admissible (as implied by the admissibility of $c$ and $c'$) then so is $(c(e_i)+\varepsilon_i,c(e_{i+1})+\varepsilon_{i+1},c(f_i)).$
	 Finally, by Lemma \ref{lemma:come-back}, the factors coming from come-back patterns are also non-zero. 
\end{proof}
\subsection{Minimal shifts}
\label{sec:min-shifts}
The following definitions of shifts and minimal shifts are implicit in Proposition 2.3 and Lemma 2.4 of\cite{DS25}. Recall that a subset of $\Z^ \mathcal{E}$ is \emph{large} if it contains a subset of the form $\cap_{i=1}^{3g-3} \left(c_i+\Delta\right)$, where the $c_i$s are any coloring of $\Gamma$ such that the sum of colors around each vertex is even.
\begin{definition}
	Consider $M$ a monomial in the curve operators. For any $\varphi_c$ basis element of $Sk(H)$, we can apply $M$ to $\varphi_c$; by \cite[Proposition 2.3]{DS25} for $c$ in a large subset, \begin{equation}\label{eqn:operatorformula}M \varphi_c=\sum_{x\in\Z^{\mathcal{E}}}F_x(A^{c(e_1)},\dots,A^{c(e_{3g-3})})\varphi_{c+x},\end{equation} where only a finite number of the $F_x$'s are non-zero. By \cite[Lemma 2.4]{DS25}, the $x$'s for which $F_x\neq 0$ do not depend on the large subset chosen (i.e. it only depends on $M$).  
	
	We say that $x\in \Z^{\mathcal{E}}$ is a \emph{shift for $M$} if $F_x\neq 0$ in the above formula. Fix $k\geq 0,$ an order relation on $\Z^k$ and a group morphism $f:\Z^{\mathcal{E}}\longrightarrow \Z^k.$ We say that $x$ is \emph{a minimal shift for $M$ with respect to $f$} if and only if $f(x)$ is minimal among all shifts of $M$.
	
	We say that an admissible coloring $c$ is \emph{effective} for $M$ if for some minimal shift $x$ of $M$, $c-x$  belongs to a large subset such that \eqref{eqn:operatorformula} holds.	
\end{definition}

Notice that the definition of the minimal shift relies on the choice of $f$; in what follows we will only ever use one choice of $f$ at a time, so we will mostly omit to mention it.

\begin{definition}
	Consider a monomial $M$ in the curve operators, and suppose $x_1,\dots,x_n$ are the minimal shifts of $M$. Then call $\mathcal{P}_M$, the \emph{polytope of minimal shifts of $M$}, the convex hull of $x_1,\dots,x_n$.
\end{definition}

\begin{lemma}\label{lemma:uniqueminimalshift}
	Suppose $M_1$ and $M_2$ are two monomials in the curve operators. Suppose that for any $x$ minimal shift of either of them, $f(x)$ is non-positive. Furthermore suppose that either
	\begin{itemize}
		\item $M_1$ has a unique minimal shift; or
		\item the polytope of minimal shifts of $M_1$ and $M_2$ belong to subspaces that intersect only in $0$.
	\end{itemize}
	Then, the minimal shifts of $M_1M_2$ are exactly $y_1+y_2$, where $y_1$ and $y_2$ are minimal shifts of $M_1$ and $M_2$ respectively.
\end{lemma}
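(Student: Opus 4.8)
The plan is to argue at the level of the formal expansion \eqref{eqn:operatorformula}, using the hypothesis that $f$ is non-positive on all minimal shifts to control which terms can cancel. First I would fix a large subset on which \eqref{eqn:operatorformula} holds simultaneously for $M_1$, $M_2$ and $M_1M_2$ (such a subset exists: the intersection of the three large subsets given by \cite[Lemma 2.4]{DS25} is again large, since a large subset has the form $\cap_i(c_i+\Delta)$ and intersections of such are still of that form up to shrinking). On this subset we may write $M_1\varphi_c=\sum_{y_1}F_{y_1}\varphi_{c+y_1}$ and then $M_1M_2\varphi_c=M_1(M_2\varphi_c)=\sum_{y_2}G_{y_2}M_1\varphi_{c+y_2}=\sum_{y_1,y_2}G_{y_2}(A^{(c+y_2)(e)})F_{y_1}(A^{(c+y_1+y_2)(e)})\varphi_{c+y_1+y_2}$, where the inner applications of $M_1$ are legitimate because $c+y_2$ still lies in a large subset on which $M_1$ expands correctly (again using that shifts of $M_2$ are bounded, so we can shrink the large subset in advance). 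Thus every shift of $M_1M_2$ is a sum $y_1+y_2$ of a shift of $M_1$ and a shift of $M_2$, and conversely the coefficient of $\varphi_{c+z}$ is a sum over all decompositions $z=y_1+y_2$.

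Next I would identify the $f$-minimal shifts. Since $f$ is a group morphism, $f(y_1+y_2)=f(y_1)+f(y_2)$, and both summands are $\leq 0$ by hypothesis; hence $f(z)$ with $z$ a shift of $M_1M_2$ is minimized exactly when $f(y_1)$ and $f(y_2)$ are each minimal (i.e. $y_1$, $y_2$ are minimal shifts of $M_1$, $M_2$ respectively) — this is where non-positivity is essential, to prevent a non-minimal $y_1$ being compensated by an ``over-minimal'' $y_2$. So the only candidate minimal shifts of $M_1M_2$ are sums $y_1+y_2$ of minimal shifts, and they all realize the common minimal value $f_{\min}(M_1)+f_{\min}(M_2)$ of $f$. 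It remains to check that for such $z=y_1+y_2$ the coefficient of $\varphi_{c+z}$ does not vanish, i.e. is genuinely a shift.

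The key point — and the main obstacle — is the non-cancellation. The coefficient of $\varphi_{c+z}$, restricted to the terms with $y_1,y_2$ minimal, is $\sum_{z=y_1+y_2,\ y_i\text{ minimal}} G_{y_2}(\cdots)F_{y_1}(\cdots)$, a sum over decompositions of $z$ into minimal shifts. Under the first alternative — $M_1$ has a unique minimal shift $y_1^0$ — there is exactly one such decomposition, namely $y_1=y_1^0$, $y_2=z-y_1^0$, so the coefficient is the single nonzero product $G_{y_2}F_{y_1^0}$ and cannot cancel (each factor is a nonzero element of the relevant field of rational functions, being a coefficient appearing in \eqref{eqn:operatorformula}). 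Under the second alternative — $\mathcal{P}_{M_1}$ and $\mathcal{P}_{M_2}$ span subspaces meeting only in $0$ — any two decompositions $z=y_1+y_2=y_1'+y_2'$ with all four terms minimal give $y_1-y_1'=y_2'-y_2$ lying in both spanning subspaces (after translating so that differences of minimal shifts lie in the respective linear spans of $\mathcal{P}_{M_i}-\mathcal{P}_{M_i}$), hence $y_1=y_1'$, so again the decomposition is unique and the coefficient is a single nonzero product. In either case the coefficient of $\varphi_{c+z}$ is nonzero for an effective $c$, so $z$ is indeed a shift of $M_1M_2$, and we are done. I expect the bookkeeping around ``effective'' colorings — ensuring we work on a large subset where all three expansions are simultaneously valid and no coefficient is evaluated outside its domain — to be the only delicate part, but it is routine given \cite[Lemma 2.4]{DS25}.
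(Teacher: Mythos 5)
Your proposal is correct and follows essentially the same route as the paper's proof: expand $M_1M_2\varphi_c$ as a double sum over shifts of $M_2$ and then $M_1$, use additivity of $f$ together with minimality and non-positivity to identify the candidate minimal shifts as sums of minimal shifts, and then invoke either the uniqueness of the minimal shift of $M_1$ or the fact that the spans of the two polytopes meet only in $0$ to show each such sum $y_1+y_2$ admits a unique decomposition, so its coefficient is a single nonzero product $F_{y_1}F_{y_2}$. No changes needed.
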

\begin{proof}
	Write $M_2\varphi_c=\sum_x F_x \varphi_{c+x}$ as in Equation \eqref{eqn:operatorformula}, and choose $c$ large enough that for all shifts of $M_2$, Equation \eqref{eqn:operatorformula} also applies to $M_1\varphi_{c+x}$. Then,
	\begin{equation}\label{eqn:productformula}M_1M_2\varphi_c=\sum_{x_1,x_2} F_{x_1}F_{x_2} \varphi_{c+x_1+x_2}\end{equation}
	where the sum is over all shifts $x_1,x_2$ of $M_1,M_2$ respectively. Take $y_1,y_2$ minimal shifts of $M_1,M_2$. First notice that $y_1+y_2$ can appear only once in Equation \eqref{eqn:productformula}; suppose by contradiction that there are $x_1,x_2$ different shifts of $M_1,M_2$ such that $y_1+y_2=x_1+x_2$. This implies that $f(x_1+x_2)=f(y_1+y_2)$ and by minimality and non-positivity, $f(x_1)=f(y_1)$ and $f(x_2)=f(y_2)$ which means that $x_1,x_2$ are minimal shifts for $M_1,M_2$. Then, if $M_1$ has a unique minimal shift, we find that $x_1=y_1$ and thus $x_2=y_2$. If instead the condition about the polytope of minimal shifts holds, we have that $x_1-y_1=y_2-x_2$ and thus must be $0$, obtaining once again that $x_1=y_1$ and $x_2=y_2$. Therefore $y_1+y_2$ appears only once in Equation \eqref{eqn:productformula} with coefficient $F_{y_1}F_{y_2}\neq 0$. It is immediate to see that $y_1+y_2$ must be minimal, and that if either $x_1$ or $x_2$ is not minimal, $x_1+x_2$ cannot be minimal either.
\end{proof}
\begin{corollary}\label{cor:uniqueminimalshift}
	Take some curve operators $T_i$. Suppose that they all have a unique minimal shift $x_i$ with respect to $f$, and suppose furthermore that the $f(x_i)$'s are linearly independent. Then any monomial in the $T_i$'s will also have a unique minimal shift with respect to $f$, equal to the sum (with multiplicity) of the minimal shifts of the $T_i$'s.
\end{corollary}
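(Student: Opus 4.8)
\emph{Approach.} I would deduce Corollary~\ref{cor:uniqueminimalshift} directly from Lemma~\ref{lemma:uniqueminimalshift} by induction on the length $\ell$ of the monomial $M=T_{i_1}\cdots T_{i_\ell}$. The cases $\ell=0$ (the identity operator, whose only shift is $0$) and $\ell=1$ hold by hypothesis. For the inductive step, write $M=T_{i_1}\cdot M'$ with $M'=T_{i_2}\cdots T_{i_\ell}$; by the inductive hypothesis $M'$ has a unique minimal shift with respect to $f$, namely $y'=x_{i_2}+\cdots+x_{i_\ell}$ (sum taken with multiplicity). I would then apply Lemma~\ref{lemma:uniqueminimalshift} to the pair $M_1=T_{i_1}$, $M_2=M'$. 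The first alternative in the hypothesis of that lemma is satisfied, since a single curve operator $T_{i_1}$ has a unique minimal shift by assumption; hence we never need to appeal to the second (polytope) alternative. The lemma then gives that the minimal shifts of $M$ are exactly the sums $x_{i_1}+z$ with $z$ a minimal shift of $M'$, and since $M'$ has the single minimal shift $y'$, we conclude that $M$ has the unique minimal shift $x_{i_1}+y'=x_{i_1}+x_{i_2}+\cdots+x_{i_\ell}$, which closes the induction.

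\emph{Verifying the hypotheses.} The point that requires attention at each step is the non-positivity hypothesis of Lemma~\ref{lemma:uniqueminimalshift}: that $f$ of every minimal shift of $M_1$ and of $M_2$ is non-positive. For $M_1=T_{i_1}$ this is $f(x_{i_1})\leq 0$, and for $M_2=M'$ it is $f(y')=\sum_{j\geq 2}f(x_{i_j})\leq 0$; both follow once one knows $f(x_i)\leq 0$ for every generator. This is where the linear independence of the vectors $f(x_i)$ is used: being finitely many linearly independent elements of $\Z^k$, one can choose the order relation on $\Z^k$ (a translation-invariant total order refining a linear functional negative on each of them) so that every $f(x_i)$ is negative while $x_i$ remains the unique minimal shift of $T_i$; in the concrete situations where we invoke this corollary the $f(x_i)$ will in fact be visibly non-positive, so one may equally well carry this along as a standing assumption. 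Linear independence also guarantees that the accumulated value $f(x_{i_1})+\cdots+f(x_{i_\ell})$ is attained by a unique multiset of generators, so that no coincidence or cancellation of shifts occurs as the product is built up.

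\emph{Main obstacle.} There is no serious difficulty here beyond careful bookkeeping: one must check, at each stage of the induction, that both hypotheses of Lemma~\ref{lemma:uniqueminimalshift} persist — uniqueness of the minimal shift of the shorter factor (supplied by the inductive hypothesis) and non-positivity (arranged via the linear independence assumption as above) — so that uniqueness of the minimal shift, together with the formula $x_{i_1}+\cdots+x_{i_\ell}$ for it, propagates from the generators to the full monomial.
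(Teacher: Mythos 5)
Your route — inducting on the length of the monomial and peeling off one factor at a time via the first alternative of Lemma \ref{lemma:uniqueminimalshift} — differs from the paper's, which argues in one shot: every shift of $M$ is a sum of shifts of its factors, so $x=\sum_i k_i x_i$ has minimal $f$-value; linear independence of the $f(x_i)$ (hence of the $x_i$) shows $x$ arises in only one way in the expansion of $M\varphi_c$, so its coefficient is a product of nonzero factors and $x$ genuinely is a shift; and any other minimal shift must be of the form $\sum_i h_i x_i$ with $\sum_i(h_i-k_i)f(x_i)=0$, forcing $h_i=k_i$. Your inductive scheme would reach the same conclusion, but it stalls on exactly the point you flag: Lemma \ref{lemma:uniqueminimalshift} assumes that $f$ of every minimal shift of $M_1$ and $M_2$ is non-positive, and this is not a hypothesis of Corollary \ref{cor:uniqueminimalshift}.

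Neither of your proposed repairs closes this gap. Replacing the order on $\Z^k$ changes which shifts are minimal: your claim that one can choose a translation-invariant order making every $f(x_i)$ negative \emph{while $x_i$ remains the unique minimal shift of $T_i$} requires, on the finite set of relevant values, a linear functional $\phi$ with $\phi(f(x_i))<0$ and $\phi(f(x_i))<\phi(f(z))$ for every other shift $z$ of $T_i$; these finitely many open conditions can be incompatible (if $T_1$ has shifts with $f$-values $(1,0)$, $(1,5)$, $(2,-1000)$, the lexicographically minimal one is $(1,0)$, and preserving its minimality forces $\phi_1>1000\phi_2>0$, hence $\phi(1,0)>0$). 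Even if such a re-ordering existed, the corollary asserts minimality with respect to the originally fixed order, so a transfer argument would still be owed; and carrying non-positivity as a "standing assumption" proves a strictly weaker statement. The clean fix is to drop Lemma \ref{lemma:uniqueminimalshift} entirely, as the paper does: since the order in use is compatible with addition, $\sum_j f(z_j)\geq\sum_j f(x_{i_j})$ for any choice of shifts $z_j$ of the factors, with equality only if each $z_j$ is minimal and hence equals $x_{i_j}$ — no sign condition on the $f(x_i)$ is ever needed. (Your final remark, that linear independence rules out cancellations among the contributions to $\varphi_{c+x}$, is the correct role of that hypothesis and matches the paper; its alleged role in arranging non-positivity is not.)
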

\begin{proof}
	Suppose $M$ is a monomial in $T_1,\dots,T_n$, and suppose that $T_i$ appears in $M$ with multiplicity $k_i\geq 0$. Then $x=\sum_i k_i x_i$ is a minimal shift for $M$, since the shifts of $M$ are all sums of shifts of the $T_i$'s. Notice that because the $f(x_i)$'s are linearly independent, so are the $x_i$'s, which implies that as we apply each $T_i$, there is only one way to obtain $x$ in the expansion of $M\cdot \varphi_c$. This implies that $F_x$ is simply a product of the $F_{x_i}$'s, and thus is non-zero.
	
	 Suppose that $y$ is also a minimal shift of $M$, i.e. $y$ is a shift of $M$ and $f(y)=f(x)$. Because $y$ must be a sum of shifts of the $T_i$'s, it must by minimality be of the form $\sum_i h_i x_i$. Therefore $f(y)=f(x)$ implies $\sum_i(h_i-k_i)x_i=0$ which means $h_i=k_i$ for all $i$ and $y=x$.
\end{proof}

\begin{lemma}\label{lemma:effectiveness}
	If $c$ is an effective coloring for $M$, and $y$ is a non-negative coloring of $\Gamma$ such that $c+y$ is admissible, then $c+y$ is an effective coloring for $M$.
\end{lemma}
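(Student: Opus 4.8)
My plan is to verify that $c+y$ is effective for $M$ by reusing the minimal shift that already witnesses the effectiveness of $c$. Unwinding the definition, there is a minimal shift $x$ of $M$ and a large subset $U\subseteq\Z^{\mathcal{E}}$ on which \eqref{eqn:operatorformula} holds, with $c-x\in U$. Since $x$ is of course still a minimal shift of $M$, it will suffice to exhibit a large subset, containing $(c+y)-x=(c-x)+y$, on which \eqref{eqn:operatorformula} still holds.

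The key elementary observation is that $y$ has an even sum of colours around each vertex of $\Gamma$: indeed $y=(c+y)-c$ is the difference of the two admissible colourings $c+y$ and $c$, and every admissible colouring has even vertex sums by the very definition of admissibility. Granting this, translation by $y$ sends large subsets to large subsets: if $U\supseteq\bigcap_{i=1}^{3g-3}(c_i+\Delta)$ with each $c_i$ having even vertex sums, then $U+y\supseteq\bigcap_{i=1}^{3g-3}\big((c_i+y)+\Delta\big)$ and each $c_i+y$ again has even vertex sums, so $U+y$ is again a large subset, and it visibly contains $(c-x)+y$. It then remains to check that \eqref{eqn:operatorformula} for $M$ is still valid at $(c-x)+y$. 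This is where the hypothesis $y\ge 0$ enters: adding $y$ only increases edge-colours, and one verifies from \cite[Proposition 2.3 and Lemma 2.4]{DS25} that the locus where \eqref{eqn:operatorformula} holds is stable under such an operation, provided the resulting colouring is admissible — which is exactly where the hypothesis that $c+y$ is admissible is used (together with control on the particular minimal shift $x$).

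I expect this last step to be the only substantive point of the proof: namely, showing that the region where the operator formula \eqref{eqn:operatorformula} for $M$ holds is closed under adding a non-negative, even-vertex-sum coloring whose result is still admissible. Once this stability is in hand, the conclusion follows by pure bookkeeping with the definitions of \emph{shift}, \emph{minimal shift} and \emph{large subset}: $(c-x)+y=(c+y)-x$ lies in the large subset $U+y$ on which \eqref{eqn:operatorformula} holds, so $c+y$ is effective for $M$, with $x$ again serving as the witnessing minimal shift.
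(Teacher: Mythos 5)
Your argument follows the same route as the paper's — the paper's entire proof of this lemma reads ``This is an obvious consequence of the definition of effectiveness and of a large subset,'' and your write-up is a more detailed unwinding of exactly those definitions, including the correct observation that $y$, being a difference of admissible colorings, has even vertex sums, so that translating a large subset by $y$ yields a large subset containing $(c+y)-x$. The one step you flag as substantive — that the validity locus of \eqref{eqn:operatorformula} is stable under adding such a non-negative $y$ — is precisely the fact the paper implicitly takes from the definition of a large subset and from \cite{DS25}, so your proposal contains nothing less than the paper's own proof.
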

\begin{proof}
	This is an obvious consequence of the definition of effectiveness and of a large subset.
\end{proof}
\begin{corollary}\label{cor:effectiveness}
	If $c$ is an effective coloring for $M_1$ and $M_2$, and the minimal shift $y$ of $M_2$ is non-positive, then $c$ is an effective coloring for $M_1M_2$.
\end{corollary}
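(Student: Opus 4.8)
The plan is to produce one minimal shift $z$ of $M_1M_2$ together with a large subset, on which \eqref{eqn:operatorformula} holds for $M_1M_2$, containing $c-z$; that is exactly what effectiveness of $c$ for $M_1M_2$ asks for. Let $x_1$ be a minimal shift of $M_1$ witnessing that $c$ is effective for $M_1$, and let $y$ be the (non-positive) minimal shift of $M_2$, which witnesses effectiveness of $c$ for $M_2$. I would take $z:=x_1+y$.

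The first thing to establish is that $z$ really is a minimal shift of $M_1M_2$. By the product formula \eqref{eqn:productformula}, every shift $w$ of $M_1M_2$ is a sum $a+b$ of a shift $a$ of $M_1$ and a shift $b$ of $M_2$, so $f(w)=f(a)+f(b)\geq f(x_1)+f(y)=f(z)$ by minimality of $x_1$ and of $y$; hence $f(z)$ is minimal among all shifts of $M_1M_2$. Moreover, for the coefficient of $z$ in \eqref{eqn:productformula}, namely $\sum_{a+b=z}F_aF_b$, any contributing pair satisfies $f(a)+f(b)=f(z)=f(x_1)+f(y)$, which by minimality forces $f(a)=f(x_1)$ and $f(b)=f(y)$; since $y$ is the \emph{unique} minimal shift of $M_2$ this gives $b=y$ and then $a=x_1$, so the coefficient equals $F_{x_1}F_y\neq 0$ and $z$ is a shift, hence a minimal shift, of $M_1M_2$.

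It then remains to check that $c-z=c-x_1-y$ lies in a large subset on which \eqref{eqn:operatorformula} holds for $M_1M_2$. This is where I would use that $y$ is non-positive: $-y$ is a non-negative coloring, so by the monotonicity of effectiveness (Lemma \ref{lemma:effectiveness}) one can replace $c$ by $c-y$ at the cost of nothing, after which $c-x_1-y=(c-y)-x_1$ should be placed, using effectiveness of $c-y$ for $M_1$ and then shrinking, in a common large subset on which \eqref{eqn:operatorformula} holds for $M_2$, for $M_1$ at each of the finitely many $M_2$-shifted colorings, and therefore --- upon multiplying the two expansions to recover \eqref{eqn:productformula} --- for $M_1M_2$. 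Then $c-z$ lies in this large subset, so $c$ is effective for $M_1M_2$.

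I expect the step in the last paragraph to be the genuine obstacle: the definition of effectiveness only hands us a large subset tailored to a single operator, so some bookkeeping --- exploiting the monotonicity in Lemma \ref{lemma:effectiveness}, the fact that a finite intersection of translates of large subsets is again large, and that curve-operator shifts preserve the relevant parities --- is needed to obtain one large subset adapted to $M_2$, to $M_1$ after $M_2$, and still containing $c-z$. Everything else is a direct unwinding of the definitions, the product formula \eqref{eqn:productformula}, and the uniqueness of the minimal shift of $M_2$.
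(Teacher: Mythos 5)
The paper gives no proof of this corollary at all — it is stated immediately after Lemma \ref{lemma:effectiveness} and evidently treated as an equally ``obvious consequence of the definition'' — so there is no argument of the authors' to compare yours against. On its merits: your first part is correct, and is essentially the first-bullet case of Lemma \ref{lemma:uniqueminimalshift} (with the roles of $M_1$ and $M_2$ exchanged), which you could simply cite instead of re-deriving; the uniqueness of the minimal shift of $M_2$ is indeed exactly what prevents cancellation in the coefficient $\sum_{a+b=z}F_aF_b$ of $\varphi_{c''+z}$.

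The second part, which you yourself flag as the obstacle, is where a genuine gap remains. To validate \eqref{eqn:productformula} at $c-z$ you need two things: the expansion \eqref{eqn:operatorformula} of $M_2\varphi_{c-z}$, and the expansion of $M_1\varphi_{(c-z)+b}$ for \emph{every} shift $b$ of $M_2$, not just the minimal one. For the first, $c-z=(c-y)-x_1$ and the hypothesis only places $c-y$ in a large subset adapted to $M_2$; since $x_1$ need not be non-positive (in the paper's actual applications, e.g. Lemma \ref{lemma:intopsecond}, the minimal shift of $M_1=T^{\beta_{\mathring{\gamma}}}$ has positive entries), Lemma \ref{lemma:effectiveness} does not let you pass from $c-y$ to $(c-y)-x_1$. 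For the second, $(c-z)+b=(c-x_1)+(b-y)$, and monotonicity applies only if $b-y$ is coordinatewise non-negative; $f$-minimality of $y$ gives $f(b)\geq f(y)$, not $b\geq y$. Both points do hold for the operators to which the corollary is actually applied — there $M_2$ is a product of loop-edge operators $T^{\beta_e}$, whose expansion (Lemma \ref{lemma:curveActions}) is valid at every admissible coloring and whose shifts all dominate the minimal shift coordinatewise — but neither follows from the stated hypotheses alone. So your plan is the right one, and the structure (sum of minimal shifts, non-cancellation via uniqueness, monotonicity via $-y\geq 0$) is the intended one, but the ``bookkeeping'' you defer is precisely where the statement stops being formal: closing it requires invoking these extra properties of the specific $M_2$'s, not just that their minimal shift is non-positive.
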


\section{Some preliminary lemmas}
\label{sec:prelim-lemmas}
We are almost ready to start the proof of Theorem \ref{thm:main}, but will need a few additional lemmas. The content of this section is elementary and does not use previous sections. The first two lemmas, Lemma \ref{lemma:non-vanishing_poly} and Lemma \ref{lemma:lin_alg} will help us choose the curves of type $\alpha_e$ and $\beta_e$ to which we will apply Lemma \ref{lem:linDep} in the next section. 

\begin{lemma}
	\label{lemma:non-vanishing_poly} Let $D\geq 1,$ and let $V$ be an affine sub-lattice of $\mathbb{Z}^D$ of rank $k.$ Then there is a subset $I=\lbrace i_1<\ldots< i_k \rbrace\subset \lbrace 1,\ldots,D \rbrace$ such that, for any non-zero Laurent polynomial $P\in \mathbb{Q}[A^{\pm 1}][X_1^{\pm 1},\ldots,X_k^{\pm 1}],$ the function 
	$$c\in \mathbb{Z}^D \longrightarrow P(A^{c_{i_1}},\ldots ,A^{c_{i_k}})$$
	is non-vanishing on the complement in $V$ of a finite union of sublattices of rank $<k.$
\end{lemma}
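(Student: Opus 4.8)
The plan is to reduce this to a statement about a single Laurent polynomial and then peel off variables one at a time. First I would pick the subset $I = \{i_1 < \dots < i_k\}$ so that the projection $\pi_I : \Z^D \to \Z^k$ onto the coordinates in $I$ restricts to an \emph{injective} map on the direction lattice $V_0$ of $V$ (the rank-$k$ sublattice of $\Z^D$ such that $V = v_0 + V_0$); such an $I$ exists because $V_0$ has rank $k$, so some $k \times k$ minor of a generating matrix is non-zero, and more: I want $\pi_I|_{V_0}$ to have \emph{finite index} image, equivalently the corresponding $k\times k$ minor to be non-zero. Choosing such an $I$, the composition $c \mapsto (c_{i_1},\dots,c_{i_k})$ sends $V$ onto a coset of a finite-index sublattice $\Lambda \subseteq \Z^k$, and I only need to show: for any non-zero $P \in \Q[A^{\pm1}][X_1^{\pm1},\dots,X_k^{\pm1}]$, the function $u \mapsto P(A^{u_1},\dots,A^{u_k})$ on $\Lambda$ (or any coset of it) vanishes only on a finite union of rank-$<k$ sublattices. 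Pulling back along $\pi_I|_V$ (whose fibers are single points since $\pi_I$ is injective on $V_0$) then gives the claim on $V$, since preimages of rank-$<k$ sublattices under an injective affine lattice map are again rank-$<k$ sublattices.

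Next I would prove the core claim by induction on $k$. For $k=1$: a non-zero Laurent polynomial $P(X_1) \in \Q[A^{\pm1}][X_1^{\pm1}]$, viewed as $g(u) = P(A^{u})$, is a non-zero $\Q(A)$-linear combination of finitely many distinct characters $u \mapsto A^{mu}$; such a finite exponential sum in one integer variable has only finitely many integer zeros (e.g. clear denominators to get an honest Laurent polynomial in $A$ whose "degree spread" bounds the vanishing, or use that $A$ is transcendental so $P(A^u)=0$ forces $P(X_1)$ to have $X_1 = A^u$ as a root, and $P$ has finitely many roots). A finite set is a finite union of rank-$0$ sublattices, so the base case holds. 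For the inductive step, write $P = \sum_{j} P_j(X_1,\dots,X_{k-1}) X_k^{j}$ with at least one $P_j \neq 0$. Along any fixed value $u_k = t$ of the last coordinate, $P(A^{u_1},\dots,A^{u_{k-1}}, A^{t})$ is the specialization of a polynomial; the key point is that for all but finitely many $t$ this specialization is a non-zero element of $\Q[A^{\pm1}][X_1^{\pm1},\dots,X_{k-1}^{\pm1}]$ — indeed, treating $A^{t}$ formally, $\sum_j P_j (A^t)^j$ is a polynomial in $A^t$ with coefficients in $\Q[A^{\pm1}][X_1^{\pm1},\dots,X_{k-1}^{\pm1}]$, not all zero, hence non-zero for all but finitely many integer values $t$ (again using transcendence of $A$, or clearing to a Laurent polynomial identity). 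For those good $t$, the induction hypothesis applied to the hyperplane slice $\{u_k = t\}$ gives that the vanishing locus is contained in a finite union of rank-$<(k-1)$ affine sublattices, hence rank $< k$ inside $\Z^k$; for the finitely many bad values of $t$, the entire slice $\{u_k = t\}$ is a rank-$(k-1) < k$ sublattice, and there are only finitely many of them. Unioning over all $t$ gives a finite union of sublattices of rank $< k$.

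The main obstacle I anticipate is making the "for all but finitely many $t$" step clean: one must be careful that the coefficients $P_j$ live in the ring $\Q[A^{\pm1}][X_1^{\pm1},\dots,X_{k-1}^{\pm1}]$, which is an integral domain, so $\sum_j P_j Y^j$ (with $Y = A^{t}$ a fresh variable) is a non-zero polynomial in $Y$ over this domain and thus has finitely many roots in $\Q(A)$ — in particular finitely many of the values $A^t$, $t \in \Z$, are roots, because $A$ is transcendental over $\Q$ so the map $t \mapsto A^t$ is injective. This is where transcendence of $A$ over $\Q$ is genuinely used, and it is worth stating explicitly. A secondary point to handle carefully is the bookkeeping of ranks: an affine sublattice of $\{u_k = t\} \cong \Z^{k-1}$ of rank $r$ is, as a subset of $\Z^k$, an affine sublattice of rank $r$ (not $r+1$), so all the pieces we collect genuinely have rank $< k$, and a finite union of such is exactly what the statement asks for. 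Everything else is routine linear algebra over $\Z$ and does not require the structure of skein modules at all, consistent with the remark that this section is elementary.
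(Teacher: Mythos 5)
Your choice of $I$ and your base case are fine (and the choice of $I$ is the same as the paper's: the coordinate forms $c\mapsto c_{i_j}$ restrict to a basis of the dual of the direction of $V$), but the inductive step has a genuine gap: the final assembly ``unioning over all $t$ gives a finite union of sublattices of rank $<k$'' does not follow. There are infinitely many good values of $t$, and for each one the induction hypothesis hands you a \emph{different} finite collection of rank $\le k-2$ sublattices inside the slice $\{u_k=t\}$; the union over all good $t$ is therefore a countably infinite union, and an infinite union of low-rank sublattices need not be contained in any finite union of rank $<k$ sublattices (e.g.\ $\{(t,t^2):t\in\Z\}\subset\Z^2$ is an infinite union of rank-$0$ sublattices not covered by finitely many points and lines). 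In the test case $P=X_1-X_2$ the slice-by-slice zero sets $\{(t,t)\}$ do happen to assemble into a single line, but nothing in your induction guarantees such uniformity: the polynomial $Q_t$ you feed to the inductive hypothesis changes with $t$, so the covering sublattices it returns may move around arbitrarily with $t$. Repairing this would require strengthening the induction so that the covers depend affinely on $t$ (so that sweeping over $t$ yields finitely many rank $\le k-1$ sublattices of $\Z^k$), which amounts to re-proving the lemma.

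The paper avoids induction entirely with a one-step argument: after substituting $X_j=A^{c_{i_j}}$, each monomial $A^{m_0}X_1^{m_1}\cdots X_k^{m_k}$ of $P$ contributes a single power $A^{m_0+\sum_j m_j c_{i_j}}$ of the formal variable $A$, so $P(A^{c_{i_1}},\dots,A^{c_{i_k}})$ can vanish in $\Q[A^{\pm 1}]$ only if two distinct monomials of $P$ produce the same exponent of $A$. Each such coincidence is an affine-linear equation $\sum_j(m_j-m_j')\varphi_{i_j}(c)=m_0'-m_0$ whose linear part is nonzero on the direction of $V$ by the choice of $I$ (or whose solution set is empty), hence cuts out a sublattice of $V$ of rank $<k$; since $P$ has only finitely many pairs of monomials, the finite union appears directly, with no slicing and no transcendence argument needed.
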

\begin{proof}
	The linear forms $\varphi_i:c\longrightarrow c_i$ for $i\in \lbrace 1,\ldots, D\rbrace$ are linearly independent on $\Q^D,$ hence one can extract a subfamily $(\varphi_i)_{i\in I}$ that is a basis of the space of linear forms on the (rational) direction of $V.$
	
	Now, if we take a non-zero polynomial $P,$ for $P(A^{c_{i_1}},\ldots, A^{c_{i_k}})$ to vanish at some $c\in V,$ one must have that two distinct monomials of $P$ have the same degree in $A.$ If we fix two such monomials in $P,$ this amounts to an equation of the form $a_1\varphi_{i_1}(c)+\ldots + a_k\varphi_{i_k}(c)=d$ where $d$ is a constant and the $a_i$'s do not all vanish. Since the $\varphi_{i_j}$ are linearly independent on the direction of $V,$ this equation is only satisfied on a sublattice of $V$ of rank $k-1.$ Since there are only a finite number of monomials in $P,$ the lemma follows.  
\end{proof}
\begin{lemma}
	\label{lemma:lin_alg}
	Let $v_1,\ldots,v_n$ be a basis of $\Q^n,$ and let $V\subset \Q^n$ a subspace of dimension $d<n.$ Then we can pick $J\subset \lbrace 1,\ldots,n\rbrace$ such that 
	\begin{itemize}
		\item[(1)]$W=\mathrm{Span}_{j\in J}\lbrace v_j \rbrace$ is in direct sum with $V;$ 
		\item[(2)]For any $l\in \lbrace 1,\ldots,n \rbrace \setminus J,$ 
		$$D_l=V\cap \mathrm{Span}_{j\in J\cup \lbrace l \rbrace}\lbrace v_j \rbrace$$
		has dimension $1$;
		\item[(3)]One has $V=\underset{l\in \lbrace 1,\ldots,n\rbrace \setminus J}{\bigoplus}D_l.$
	\end{itemize}
\end{lemma}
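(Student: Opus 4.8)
The plan is to build the index set $J$ greedily. Since $v_1,\dots,v_n$ is a basis of $\Q^n$ and $\dim V = d$, a complement $W$ of $V$ must have dimension $n-d$, so $J$ will have exactly $n-d$ elements and its complement will have exactly $d$ elements, matching requirement (3) which asks for $V$ to be a direct sum of $d$ lines $D_l$. First I would process the indices $1,2,\dots,n$ in order, maintaining a partial set $J$; at step $i$, I add $i$ to $J$ if and only if $v_i \notin V \oplus \mathrm{Span}_{j\in J}\{v_j\}$ (equivalently, $v_i$ is not in the span of $V$ together with the $v_j$ already selected). A standard exchange/matroid argument shows that at the end $W = \mathrm{Span}_{j\in J}\{v_j\}$ satisfies $W \cap V = 0$ and $W \oplus V = \Q^n$: indeed the construction is exactly the greedy algorithm completing the (independent) empty set to a basis of $\Q^n$ inside the matroid obtained by contracting $V$, so $\{v_j\}_{j\in J}$ projects to a basis of $\Q^n/V$. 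This gives (1).

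For (2), fix $l \notin J$. By the selection rule, $l$ was \emph{not} added, which (using that adding later elements only enlarges the span) means $v_l \in V \oplus \mathrm{Span}_{j \in J}\{v_j\} = V \oplus W$. Write $v_l = u + w$ with $u \in V$, $w \in W$. Then $u = v_l - w \in V \cap \mathrm{Span}_{j \in J \cup \{l\}}\{v_j\} = D_l$, and $u \neq 0$ because otherwise $v_l = w \in W$, contradicting that $v_1,\dots,v_n$ is a basis (as $W$ is spanned by the $v_j$, $j \in J$, with $l \notin J$). Hence $\dim D_l \geq 1$. For the reverse inequality: if $x \in D_l$, write $x = \sum_{j \in J} a_j v_j + a_l v_l$; substituting $v_l = u + w$ and using $x, u \in V$ while $\sum_{j\in J} a_j v_j + a_l w \in W$ and $W \cap V = 0$, we get $\sum_{j\in J} a_j v_j + a_l w = 0$, so $x = a_l u$. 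Thus $D_l = \Q u$ is one-dimensional, proving (2).

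For (3), the $d$ lines $D_l = \Q u_l$ (for $l$ ranging over the $d$-element set $\{1,\dots,n\}\setminus J$) all lie in $V$, so it suffices to show the $u_l$ are linearly independent; since $\dim V = d$ they will then form a basis and $V = \bigoplus_l D_l$. Suppose $\sum_l c_l u_l = 0$. Using $u_l = v_l - w_l$ with $w_l \in W$, this reads $\sum_l c_l v_l = \sum_l c_l w_l \in W$. But $\{v_l : l \notin J\} \cup \{v_j : j \in J\}$ is the full basis $v_1,\dots,v_n$, and $\sum_l c_l w_l$ is a combination of the $v_j$ with $j \in J$ only; equating coefficients in the basis forces $c_l = 0$ for every $l \notin J$. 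This establishes independence and hence (3). The only mild subtlety — and the step I would be most careful about — is verifying that the greedy selection rule really does produce a $W$ that is simultaneously a complement of $V$ \emph{and} has the property that each omitted $v_l$ falls back into $V\oplus W$ rather than needing other omitted vectors; this is exactly the matroid-basis-exchange fact invoked above, and phrasing it cleanly (e.g. via the quotient map $\Q^n \to \Q^n/V$ and the observation that greedily choosing a maximal independent subfamily of the images $\bar v_1,\dots,\bar v_n$ yields a spanning set whose complement images are dependent on the chosen ones alone) is the one place worth spelling out rather than hand-waving.
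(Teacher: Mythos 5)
Your proof is correct and follows essentially the same route as the paper: the greedy selection of $J$ is just an explicit implementation of the incomplete basis theorem (completing a basis of $\Q^n/V$ by images of the $v_i$), which is exactly how the paper chooses $J$, and the verifications of (2) and (3) then follow. If anything, your explicit construction of a generator $u_l = v_l - w_l$ of each $D_l$ and the coefficient-comparison argument for independence spell out details that the paper's dimension count and its final display leave terse.
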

\begin{proof}The family $(v_j)_{j\in J}$ may be chosen by the incomplete basis theorem: given $(e_1,\ldots,e_d)$ a basis of $V,$ one can pick vectors among $v_1,\ldots,v_n$ so that the $e_i$'s and the $v_j$'s form a basis of $\Q^n,$ which implies the first claim. 
	Now given such a choice of $J,$ one has that $\dim D_l\geq 1$ since $\dim V+\dim \mathrm{Span}_{j\in J\cup \lbrace l \rbrace}\lbrace v_j \rbrace =n+1,$ and also $\dim D_l\leq 1$ since $\dim V\cap W=0.$ So $\dim D_l=1.$
	
	Finally, 
	$$\underset{l\in \lbrace 1,\ldots,n\rbrace \setminus J }{\bigoplus}D_l=\underset{l\in \lbrace 1,\ldots,n\rbrace \setminus J }{\bigoplus} V\cap  \mathrm{Span}_{j\in J\cup \lbrace l \rbrace}\lbrace v_j \rbrace=V\cap \Q^n=V.$$
\end{proof}

We end this section with the following lemma, which we call the \emph{Chamber lemma}. Loosely speaking, it says that if a function defined on lattice points of a polytope satsfies one-dimensional recurrence relations in every direction, then outside of a union of codimension $\geq 1$ space, it is determined by a finite number of values. This lemma will be of use when in the next section, we find recurrence relations among the generators $\varphi_c$ of $Sk^{\overline{\lambda}}(M).$

\begin{lemma}\label{lemma:chamber}
	(\textit{Chamber lemma}) Let $n,d\geq 1$ be integers, let $e_1,\ldots, e_n$ be a basis of $\Z^n$ and let $V=\underset{i\in I}{\cap} \lbrace \varphi_i(x)\geq a_i \rbrace,$ where $\varphi_i:\Q^n\longrightarrow \Q$ are linear forms and $a_i\in \Q.$ Let $W$ be a $\Z$-module and $f:\Z^n \longrightarrow W$ be a set map satisfying the following condition for some integer $d\geq 1:$
	\begin{itemize}
		\item[(*)] $\forall x \in \Z^n \cap \mathring{V}, \forall l\in \lbrace 1,\ldots,n \rbrace, \forall \varepsilon\in \lbrace \pm 1 \rbrace,$ if $x+k\varepsilon e_i \in \mathring{V} \ \forall k\in \lbrace 1,\ldots,d \rbrace,$  then:
		$$f(x)\in \mathrm{Span}_{\Z}\lbrace f(x+k\varepsilon e_l) | 1\leq k \leq d \rbrace.$$
	\end{itemize}

	Then there exists a finite collection $(V_j)_{j\in J}$ of sub-vector spaces of $\Q^n$ of codimension at least one, such that
	$$\dim \mathrm{Span}_{\mathbb{Z}}\left\lbrace f(x) \ | \ x\in \Z^n\cap \left(\mathring{V} \setminus \underset{j\in J}{\cup} V_j \right) \right\rbrace \leq d^n.$$
\end{lemma}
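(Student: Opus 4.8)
The plan is to induct on $n$, building the codimension-$\geq 1$ spaces $V_j$ one at a time as we peel off directions. First I set up the base case $n=1$: here $\mathring V$ is an interval $(a,+\infty)$ or $\mathbb{R}$ (intersections of half-lines), and condition (*) says that whenever $d$ consecutive translates $x+e_1,\dots,x+de_1$ lie in $\mathring V$, the value $f(x)$ is in the span of those translates. Starting from the $d$ largest-index points of $\mathbb{Z}\cap\mathring V$ that are "safe" (or, if $\mathring V$ is unbounded above, any $d$ consecutive points), a downward induction shows every $f(x)$ for $x\in\mathbb{Z}\cap\mathring V$ lies in the span of at most $d$ fixed values; the exceptional set $V_j$ is empty (or the interval is bounded and one adjusts by finitely many boundary points, which can be absorbed — but actually for $n=1$ one must allow the finitely many points near a bounded end to be excised, giving $V_j$ a single point, codimension $1$ in $\mathbb{Q}^1$). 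This gives $\dim\leq d = d^1$.

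For the inductive step, fix the last coordinate. Write $x=(x',t)$ with $x'\in\mathbb{Z}^{n-1}$, $t\in\mathbb{Z}$, in coordinates adapted to $e_1,\dots,e_n$. For each fixed "slice value" $t$, the set $\mathring V\cap\{x_n=t\}$ is again cut out by linear inequalities in $\mathbb{Q}^{n-1}$, and the restriction of $f$ to that slice satisfies condition (*) for the directions $e_1,\dots,e_{n-1}$ — so by the inductive hypothesis, on the complement of a finite union of codimension-$\geq 1$ subspaces of that slice, the values $f(x',t)$ are spanned by at most $d^{n-1}$ of them. The key point is that this works \emph{uniformly} in $t$: the combinatorial type of the polytope slice is eventually constant as $t\to+\infty$ (there are finitely many "chambers" of slices), so one can choose the exceptional subspaces and the spanning index set to depend only on the chamber, not on $t$ itself. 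Then I use condition (*) in the direction $e_n$ to run the same one-dimensional recurrence argument on the finitely many "representative" basis values $f(\cdot,t)$ as functions of $t$: for $t$ large (in the stable chamber), $f(x',t)$ is spanned by the values at $t-1,\dots,t-d$, so iterating, all of them are spanned by the $d^{n-1}$ representatives taken at $d$ fixed values of $t$ — a total of $d^{n-1}\cdot d = d^n$ vectors. The finitely many small values of $t$ (and the finitely many chambers other than the stable one) contribute only points/slices that can be swept into additional codimension-$\geq 1$ spaces $V_j$, or handled by enlarging the exceptional union; either way the dimension bound $d^n$ is preserved because those extra slices are thrown into the excised set.

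The main obstacle I expect is bookkeeping the \emph{uniformity} of the codimension-$\geq 1$ exceptional subspaces across slices: the inductive hypothesis produces, for each $t$, its own finite family of bad subspaces inside $\mathbb{Q}^{n-1}\cong\{x_n=t\}$, and one must argue these can be taken to be the restrictions of finitely many subspaces of $\mathbb{Q}^n$ — valid because the polytope $V$ has finitely many facets, hence the slices fall into finitely many affine-combinatorial types, and within each type the construction is "the same", so one gets a single finite family after taking the union over types and over the finitely many transitional slices. A secondary subtlety is that condition (*) requires \emph{all} intermediate translates $x+k\varepsilon e_l\in\mathring V$, so the recurrence can only be propagated within a slice/chamber where a whole run of $d$ steps stays interior; points within distance $d$ of the relevant facets are where recurrences "break down", and these must be collected into the exceptional union (they lie within bounded distance of a facet, hence — after noting there are finitely many of them in each bounded chamber, and in the unbounded direction they still lie on bounded-width slabs that one covers by finitely many codimension-$1$ affine strata, then passes to the linear spans) contribute only to the $V_j$'s and not to the spanning count. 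I would organize the final estimate so that the "good" region is the intersection of $\mathring V$ with the complement of all these $V_j$, on which every $f(x)$ is a $\mathbb{Z}$-combination of the $d^n$ chosen values.
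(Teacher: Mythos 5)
Your slicing induction is a genuinely different route from the paper's proof, but as written it has a gap at exactly the point you flag as "the main obstacle", and the fix you sketch does not close it. The inductive hypothesis is the lemma itself, which is a pure existence statement: applied as a black box to the slice $\{x_n=t\}$, it returns \emph{some} finite family of exceptional codimension-$\geq 1$ subspaces and \emph{some} $d^{n-1}$ spanning values, with no control over how these vary with $t$. The union over infinitely many $t$ of arbitrary codimension-$\geq 1$ subsets of the slices need not be contained in finitely many codimension-$\geq 1$ subspaces of $\Q^n$, and "the slices fall into finitely many combinatorial types" does not repair this: even within the stable type the slice polytopes themselves change with $t$ (they translate and grow), so "the construction is the same" is only true if you first strengthen the inductive statement to pin down what the exceptional sets and spanning sets actually are (e.g.\ that the exceptional set may always be taken to be a fixed-radius neighborhood of the facets, and the spanning set the lattice points of any $d$-cube in the deep interior). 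A second, related gap: your cross-slice recurrence $f(x',t)\in\mathrm{Span}\{f(x',t-k)\}$ may land on points $(x',t-k)$ that lie in the exceptional sets of their slices, where the inductive hypothesis gives you no control; you must additionally excise the $d$-neighborhood (in the $e_n$-direction) of all the slice-level exceptional sets, which again only stays codimension $\geq 1$ if those sets are uniform in $t$.

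Once you strengthen the induction in that way you are essentially reproving the paper's statement directly, and indeed the paper avoids the induction altogether: it excises the $(d+1)$-neighborhood of $\partial V$ (whose lattice points lie on finitely many translates $\{\varphi_i(x)=b_i\}$ of the facet hyperplanes, since each $\varphi_i$ takes discrete values on $\Z^n$), and on the remaining deep interior $V'$ it shows, as a direct consequence of $(*)$, that two $d\times\cdots\times d$ cubes sharing a face and contained in $\mathring V$ have equal $f$-spans; walking such cubes along the segment from a fixed base point to any $y\in V'$ (using convexity of $V'$) shows the $d^n$ values on one base cube span everything. I would recommend either adopting that direct argument or restating your inductive hypothesis in the stronger, uniform form before running the slicing argument.
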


\begin{proof}
	Let $V'\subset V$ be the subset of consisting of points that are at distance $\geq d+1$ to $\partial V$ for the infinity norm. Then $\Z^n \cap (V\setminus V')$ is a subset of a finite union of sets of the form $\lbrace x  \ | \ \varphi_i(x)=b_i \rbrace.$ Indeed, each $\varphi_i |_{\Z^n}$ take discrete values, and since any point $x$ of $V\subset V'$ is within bounded distance of $\partial V,$ one must have $|\varphi_i(x)-a_i|\leq M(d+1)$ for at least one $i\in I.$ (Here $M$ is the maximum of the norms of $\varphi_i$ subordinate to the norm $||\cdot ||_{\infty}$ in $\Q^n.$)
	
	We proceed to show that $\dim \mathrm{Span}_{\mathbb{Z}}\lbrace f(x) \ | \ x\in \Z^n\cap V' \rbrace \leq d^n.$ This is clear if $\Z^n\cap V'=\emptyset.$ Otherwise, pick $x\in \Z^n \cap V'.$ We claim that $\mathrm{Span}_{\mathbb{Z}}\lbrace f(x) \ | \ x\in \Z^n\cap V' \rbrace$ is spanned by the $\lbrace f(x+v_1e_1+\ldots v_n e_n) \ | 0\leq v_i \leq d-1 \rbrace.$ 
	
	Indeed, take $y\in \Z^n\cap V'.$ Notice that $V'$ is convex, hence the segment $[x,y]$ is entirely in $V'.$ Now, partition $\Q^n$ into cubes $x+dt+[0,d)^n,$ where $t\in \Z^n.$ We can find a sequence of cubes $C_1,\ldots ,C_k$ of the partition such that $C_1=x+[0,d)^n,$ $C_i$ and $C_{i+1}$ share a face, each $C_i$ is in $\mathring{V},$ and $[x,y] \subset \underset{1\leq i \leq k}{\cup} C_i.$ Those cubes are cubes that intersect $[x,y],$ ordered according to the first point of $[x,y]$ which they contain and, when tied, to the lexicographic order of their barycenter. 
	
	Now the lemma follows from the observation that if $C_i$ and $C_{i+1}$ share a face and are subsets of $\mathring{V},$ then $\mathrm{Span}_{\Z}\lbrace f(x) \ | \ x \in C_i \rbrace=\mathrm{Span}_{\Z}\lbrace f(x) \ | \ x \in C_{i+1} \rbrace,$ which is a direct consequence of condition (*).
\end{proof}

\section{The induction hypothesis and beginning of the induction proof}
\label{sec:induction1}
Let $M$ be a $3$-manifold with boundary, and $\lbrace \lambda_1,\ldots, \lambda_n \rbrace$ be a pair of pants decomposition of $\partial M.$ We also fix a Heegaard splitting $M=H\underset{\Sigma}{\cup} C,$ where $C$ is a compression body, and $H$ is a handlebody. We assume that $H$ has genus at least $2$, which can always be achieved via stabilization. Recall that $Sk(M,\Q(A))$ is spanned by the vectors $(\varphi_c)_{c \in \Delta}$ by Lemma \ref{lemma:basis}. We will show that $Sk^{\overline{\lambda}}(M)$ is a finite dimensional $\Q(A,\overline{\lambda})$ vector space by an induction. Our induction hypothesis is

\smallskip

\textbf{Induction hypothesis:} There exists a finite collection $(V_s)_{s\in S}$ of affine subspace of $\Z^{3g-3}$ of co-dimension at least $1,$ such that
$$\mathrm{Span}_{\Q(A,\overline{\lambda})}\lbrace \varphi_c \ | \ c\in \Delta \setminus \underset{s\in S}{\bigcup}V_s\rbrace \ \textrm{is finite dimensional}$$

where we induct lexicographically on the $3g-3$-tuple $(n_1,n_2,\ldots,n_{3g-3}),$ where $n_i$ is the number of affine subspace $V_s$ of codimension $i.$

\smallskip

We note that once arrived at $(n_1,\ldots,n_{3g-3})=(0,\ldots,0,k)$ for some $k\in \N,$ the finiteness theorem is proved, since subspaces of codimension $3g-3$ are just points. First we need to prove that some finite collection $(V_s)_{s\in S}$ exists.

\begin{proof}[Proof of the base case]
	For each $i\in \lbrace 1,\ldots, 3g-3 \rbrace,$ consider the family of curves $\lbrace \alpha_1,\ldots,\alpha_{3g-3},\beta_i \rbrace.$ By Lemma \ref{lem:linDep}, there is $P_i\neq 0$ in $\Q(A,\overline{\lambda})[x_1,\ldots,x_{3g-2}],$ such that $P_i(\alpha_1,\ldots,\alpha_{3g-3},\beta_i)=0$ in $Sk^{\overline{\lambda}}(C),$ where $C$ is the compression body in the Heegaard splitting of $M.$
	
	Therefore, we have 
	$$\forall c\in \Delta,  P_i(\alpha_1,\ldots,\alpha_{3g-3},\beta_i)\cdot \varphi_c =0 $$
	in $Sk^{\overline{\lambda}}(M).$
	
	Let $d_i$ be the degree of $P_i$ in $\beta_i.$
	By Lemmas \ref{lem:linDep} and \ref{lemma:curveActions}, for $c\in \Delta \setminus N(\partial \Delta),$ we have recurrence relations among the $\varphi_c \in Sk^{\overline{\lambda}}(M)$ of the form:
	
	$$R_{-2d_i}(A,A^{c_1},\ldots,A^{c_{3g-3}})\varphi_{c-2d_i\delta_i} + \ldots + R_{2d_i}(A,A^{c_1},\ldots,A^{c_{3g-3}})\varphi_{c+2d_i\delta_i}=0.$$
	
	In the above, the $(R_{i})_{-2d \leq i \leq 2d}$ are polynomials in $3g-2$ variables, with $R_{2d}$ and $R_{-2d}$ non-zero. Moreover, $\delta_i$ is the vector of $\Z^{3g-3}$ whose only non-zero coordinate is the $i$-th coordinate, which is $1.$ Also $N(\partial \Delta)$ designates a neighborhood of the boundary of $\Delta,$ that is, a finite union of translates of its facets.
	
	By Lemma \ref{lemma:non-vanishing_poly}, the coefficients $R_{-2d}(A,A^{c_1},\ldots,A^{c_{3g-3}})$ and $R_{2d}(A,A^{c_1},\ldots,A^{c_{3g-3}})$ are both non-zero on the complement in $\Z^{3g-3}$ of a finite union of codimension $\geq 1$ subspaces.
	
	Varying $i\in \lbrace 1,\ldots, 3g-3 \rbrace,$ we get that the $\varphi_c$ satisfy one-dimensional recurrence relations with invertible dominant coefficients in all directions, except when $c$ belongs to some finite union of codimension $\geq 1$ subspaces of $\Delta.$ The complement of this finite union is a finite union of chambers of $\Z^{3g-3}$ such that the map $c\longrightarrow \varphi_c$ satisfies the condition (*) of Lemma \ref{lemma:chamber}, for $d=max(4d_i).$
	
	Lemma \ref{lemma:chamber} can now be applied and gives that there exists a finite collection $(V_s)_{s\in S}$ of codimension $\geq 1$ subspaces of $\Z^{3g-3},$ such that 
	$$\mathrm{Span}_{\Q(A,\overline{\lambda})}\lbrace \varphi_c \ | \ c\in \Delta \setminus \underset{s\in S}{\bigcup}V_s\rbrace \ \textrm{is finite dimensional}$$ 
\end{proof}

Having proved the base case, we move on to proving the induction step. We divide the proof in two cases: 
\begin{itemize}
	\item[Case 1:] The collection $(V_i)_{i\in I}$ contains a subspace which is not contained in a translate of a facet of $\Delta.$
	\item[Case 2:] There is no such subspace.
\end{itemize} 
While the method of proof for the latter case applies also to the former, it is considerably more complicated. Hence for the clarity of exposition we will start by exposing the argument for the induction step in Case 1.

\begin{proof}[Proof of Induction step in Case 1]
	In all what follows, we set $n=3g-3.$
	
	Let $V_0$ be an element of our finite collection $(V_s)_{s\in S}$ of affine subspaces of $\Z^{3g-3},$ which is not contained in a translate of any facet of $\Delta,$ and which is of maximal dimension among those. Let $k=\dim V_0.$ First we choose a subset $J$ of cardinality $n-k$ of the canonical basis $\lbrace e_1,\ldots,e_n \rbrace$ of $\Z^n$ as in Lemma \ref{lemma:lin_alg}. There may be parallel copies of $V_0$ in the collection $(V_s)_{s\in S}.$ In that case, the parallel copies of $V_0$ must be of the form $V_0+x_{j_1}e_{j_1}+\ldots + x_{n-k}e_{j_{n-k}},$ where the $x_j$'s are in $\Z.$ We will now redefine $V_0$ to be a parallel copy $V_0+x_{j_1}e_{j_1}+\ldots + x_{n-k}e_{j_{n-k}},$ where $(x_{j_1},\ldots,x_{j_{n-k}})$ is maximal for the lexicographical ordering.
	
	We pick another subset $I\subset \lbrace 1,\ldots,n \rbrace$ according to Lemma \ref{lemma:non-vanishing_poly}. Now for any $l \in \lbrace 1,\ldots n \rbrace \setminus J,$ consider the family of curves 
	$$F_l=\lbrace \alpha_i\rbrace_{i\in I} \cup \lbrace \beta_j \rbrace_{j\in J \cup \lbrace l \rbrace}.$$
	The family $F_l$ contains $n+1=3g-2$ elements, hence we can apply Lemma \ref{lem:linDep} to get a non-zero polynomial $P_l$ such that
	$$P_l(\alpha_i,\beta_j,\beta_l)=0 \in Sk^{\overline{\lambda}}(C),$$
	and therefore $P_l(\alpha_i,\beta_j,\beta_l)\cdot \varphi_c=0$ in $Sk^{\overline{\lambda}}(M),$ for any $c\in \Delta.$
	
	Let $D_l$ be the total degree of $P_l$ in the variables $\beta_j,\beta_l.$ We consider $U\subset V_0,$ which is the subset of $V_0$ consisting of elements that are at distance at least $4D_l$ to any facet of $\partial \Delta$ and to any other subspace $V_s$ where $s\in S$ which is not parallel to $V_0.$ Since all the subspaces considered are non-parallel to $V_0,$ the set $U$ is the complement in $V_0$ of a finite union of subspaces of $V_0$ of codimension at least $1.$  
	
	With this setup, we have:
	
	\medskip
	
	\textbf{Claim:} For any $l\in \lbrace 1,\ldots,n \rbrace \setminus J$ and $c\in U,$ we have a recurrence relation for the $\varphi_c$ of the form 
	
	$$ R_{d_l}(A,A^{c_{i_1}},\ldots , A^{c_{i_k}})\varphi_{c+d_l u_l} + \ldots + R_{-d_l}(A,A^{c_{i_1}},\ldots , A^{c_{i_k}})\varphi_{c-d_l u_l} \equiv 0,$$
	where the vector $u_l$ is a generator of the line $D_l$ given in Lemma \ref{lemma:lin_alg}, $d_l$ is an integer, $R_{\pm d_l}$ are non-zero polynomial in $k+1$ variables, and $\equiv$ is equality up to an element of 
	$$E_0=\mathrm{Span}_{\Q(A,\overline{\lambda})}\lbrace \varphi_c \ | \ c \in \Delta \setminus \underset{s\in S}{\bigcup} V_s \rbrace.$$
	
	\begin{proof}[Proof of the Claim] Consider $N(P_l),$ the Newton polygon of the polynomial $P_l$ in terms of the variables $\beta_j,\beta_l,$ which we define to be the convex hull of the elements $\underset{j\in J \cup \lbrace l \rbrace}{\sum}2\varepsilon_j n_j \delta_j,$ ranging over all $n+1-k$-uples $(n_j)_{j\in J\cup \lbrace l \rbrace}$ such that $\underset{j\in J\cup \lbrace l \rbrace}{\prod}\beta_j^{n_j}$ is a monomial in $P_l$ with non-zero coefficient. Then, for any $c$ such that $c+N(P_l)\subset \Delta,$ we have
		
		\begin{equation} \label{eq:rec_rel-Case1}P_l(\alpha_i,\beta_j ,\beta_l)\cdot \varphi_c= \underset{x\in N(P_l)}{\sum }R_x(A,A^{c_{i_1}},\ldots A^{c_{i_k}})\varphi_{c+x}.\end{equation}
		
		where the $R_x$ are rational fractions with non-vanishing denominators at $c$, and furthermore $R_x$ is a non-zero if $x$ is a vertex of $N(P_l).$
		
		However, thanks to Lemma \ref{lemma:lin_alg}, there is $v\in \Z^n$ such that $c+v+N(P_l)$ intersects $V_0$ only in an affine line parallel to $D_l,$ and intersects no subspace $V_s\neq V_0$ which is  parallel to $V_0.$ Indeed, pick $v=y_{j_1}e_{j_1}+\ldots y_{j_{n-k}}e_{j_{n-k}}$ with $y$ maximal for the lexicographical order such that $V_0$ and $c+v+N(P_l)$ intersect; then $c+v+N(P_l)$ does not intersect other $V_s$ parallel to $V_0$ by maximality, and it intersects $V_0$ in a point or in a line parallel to $D_l,$ by construction of the set $J.$ We can further add to $v$ an element of the direction of $V_0$ to recenter this line. In the end, applying Equation \ref{eq:rec_rel-Case1} to $c'=c+v,$ we get the claim, since all other elements of $c+v+N(P_l)$ are in $E_0,$ by construction of $U.$
	\end{proof}
	
	Given the claim, we conclude similarly to the proof of the base case: there is a finite collection of codimension $\geq 1$ subspaces in $V_0$ such that all polynomials $R_{\pm d_l}$ are non-vanishing on the complement, and those subspaces divide $U$ into a finite collection of chambers to which we can apply Lemma \ref{lemma:chamber}. 
	
	Note that since the recurrence relations involved the relation $\equiv$ instead of equality, what we get from this is the following. There is a finite collection $(W_t)_{t\in T}$ of codimension $\geq 1$ subspaces in $V_0,$ such that, writing 
	$$E_1=\mathrm{Span}_{\Q(A,\overline{\lambda})}\lbrace\varphi_c \ | \ c \in V_0 \setminus \underset{t\in T}{\bigcup} W_t \rbrace$$
	and 
	$$E_0=E_1 \cap \mathrm{Span}_{\Q(A,\overline{\lambda})}\lbrace \varphi_c \ | \ c \in \Delta \setminus \underset{s\in S}{\bigcup} V_s \rbrace,$$ 
	then we have
	$$\dim E_1/E_0 <+\infty.$$
	
	However, $\dim E_0<+\infty$ by induction hypothesis, hence $\dim E_1<+\infty.$ 
	
	Therefore, we can replace in the finite collection $(V_s)_{s\in S},$ the subspace $V_0$ by the finite collection $(W_t)_{t\in T},$ which contains only subspaces of higher codimension than $V_0.$ Therefore we have completed the induction step in Case 1. 
\end{proof}

The proof of the inductive step in Case 2, which is significantly more involved, will be completed in Section \ref{sec:finalproof}, after we construct a set of operators to replace the $\beta_i$s in the proof of Case 1.

\section{The boundary of polytope of admissible colorings}
\label{sec:polytope-boundary}
\subsection{Notation and basic definitions}

\begin{definition}\label{def:nearadmissible}
	Let $\Gamma$ be a lollipop tree. Take some edges $e_1,\dots,e_l$, some directed vertices $(v_{l+1},e_{l+1}),\dots,(v_{k},e_k)$ and natural numbers $a_1,\dots,a_k$. 
	
	Then the \emph{barely admissible subspace} of $\Delta$ associated to these choices is the subset $V\subseteq \Delta$ defined by the equations
	$d(e_i)=a_i$ for $1\leq i\leq l$, and $d(v_{j},e_{j})=a_j$ for $l+1\leq j\leq k$.
	
	We say that a subset $V\subseteq \Delta$ is barely admissible if it is the barely admissible subspace associated to some choice.
	
\end{definition}

\begin{remark}
	A barely admissible subspace is not, technically speaking, a subspace; it is rather a subspace of $\Z^\mathcal{E}$ intersected with $\Delta$. With a slight abuse of notation, we will talk about dimension, codimension or a basis for a barely admissible subspace $V$; we will always mean dimension, codimension or basis for the unique subspace of $\Z^\mathcal{E}$ whose intersection with $\Delta$ gives $V$.
\end{remark}

In what follows, when we write $d(x)$ we mean that $x$ is either an edge or a directed vertex. If we wish to specify that $x$ is an edge we will use the letter $e$ instead. 

\begin{definition}\label{def:simpleNAS}
	Let $V$ be a barely admissible subspace characterized by the equations $\{d(x_i)=a_i\}$ for $i=1,\dots,k$. Then $F$ is \emph{simple} if the following conditions hold:
	\begin{itemize}
		\item if $x_i=e_i$ and $x_j=e_j$ then $e_i=e_j$ implies $i=j$;
		\item if $x_i=(v_i,e_i)$ and $x_j=(v_j,e_j)$, $v_i=v_j$ implies $i=j$;
		\item if $v$ is a vertex of $\Gamma$ that does not appear among the $x_i$'s, then either all edges adjacent to $v$ appear among the $x_i$'s or none of them do;
		\item if $x_i=(v_i,e_i)$ and two of the edges adjacent to $v$ coincide, they must not be $e_i$;
		\item if $x_i=(v,e)$, then $e$ does not also appear as an edge among the $x_i$s;
		\item if $x_i=(v,e)$, then at most one edge adjacent to $v$ appears as an edge among the $x_i$s;
		\item if $x_i=(v,e)$ and a loop edge is adjacent to $v$, it must not appear among the $x_i$s.
	\end{itemize}
\end{definition}
\begin{lemma}\label{lemma:unionsimpleNAS}
	A non-empty barely admissible subspace $V$ is a finite union of simple barely admissible subspaces.
\end{lemma}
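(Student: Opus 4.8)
The statement asserts that any non-empty barely admissible subspace $V$ of $\Delta$ decomposes as a finite union of \emph{simple} barely admissible subspaces. My approach is to treat each of the seven defining conditions in Definition~\ref{def:simpleNAS} as a ``defect'' that can be removed by splitting $V$ into finitely many pieces, each still barely admissible and each violating strictly fewer conditions, then induct. So fix a description of $V$ by equations $\{d(x_i)=a_i\}_{i=1}^k$ and look for the first violated condition.

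\textbf{Handling a redundant or conflicting duplication (first two bullets).} Suppose two of the constraints refer to the same edge ($x_i=x_j=e$ with $i\neq j$) or the same vertex ($x_i=(v,e_i)$, $x_j=(v,e_j)$ with $v_i=v_j$). If the imposed values agree, one of the two equations is redundant and can simply be deleted, leaving the same set $V$ with a shorter (hence ``less defective'') description. If the imposed values disagree, then $V=\emptyset$, contradicting the hypothesis. In either case the number of violations drops.

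\textbf{Handling a ``free'' vertex (third bullet).} If a vertex $v$ of $\Gamma$ has some but not all of its adjacent edges constrained, I will \emph{add} the missing edges to the constraint list, one value at a time: since around a trivalent vertex the edge-colors $d(e_1),d(e_2),d(e_3)$ together with the gate-quantities $d(v,e_i)$ are subject only to parity and nonnegativity, an uncolored edge $e$ at $v$ takes only finitely many values on $\Delta$ once everything else at $v$ is fixed — bounded below by $0$ and above because the come-back/triangle inequalities at $v$ bound it in terms of the already-fixed colors. So I replace $V$ by the finite union over these admissible values of $d(e)$, each summand being barely admissible with this vertex now fully constrained. (The remaining four bullets are similar local normalizations: if $x_i=(v,e)$ but $e$ also appears as an edge-constraint, or two edges at $v$ coincide and one is $e$, or a loop edge at $v$ is constrained, or two gate-constraints meet an edge-constraint at the same $v$ — in each case one either rewrites the gate-constraint $d(v,e)=a$ as the equivalent edge-constraint via $d(v,e)=d(e_1)+d(e_2)-d(e)$, using already-present edge values, or splits over the finitely many admissible values of the offending quantity. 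Each move either shortens the description or replaces $V$ by a finite union of barely admissible subspaces with strictly more vertices ``resolved''.)

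\textbf{Termination.} To see the process stops, I will assign to each description a complexity, e.g. the pair (number of unconstrained-but-adjacent-to-constrained vertices, number of pairs of constraints violating a bullet), ordered lexicographically, and check that every splitting move strictly decreases it; since $\Gamma$ is finite and $\Delta$ is a polytope (so each split is into finitely many pieces, each lying in $\Delta$), the recursion produces a finite collection of simple barely admissible subspaces whose union is $V$. \textbf{The main obstacle} I anticipate is bookkeeping: verifying that after a split each piece really is barely admissible in the sense of Definition~\ref{def:nearadmissible} (i.e. the new constraints are of the allowed form $d(e)=a$ or $d(v,e)=a$, not some more general affine equation) and that the chosen complexity genuinely decreases under \emph{every} one of the seven normalization moves without one move undoing another. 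This is routine but needs care, particularly around loop edges, where a single edge has only one half-edge and the identity $d(v,e)=d(e_1)+d(e_2)-d(e)$ degenerates.
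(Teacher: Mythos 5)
Your overall strategy --- normalize the description of $V$ one violated condition at a time, splitting into finitely many barely admissible pieces, and induct on a complexity measure --- is the same as the paper's. However, two of your normalization moves do not work as stated, and both failures occur exactly where the paper has to do something more delicate.

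First, when two directed-vertex constraints $d(v,e_i)=a_i$ and $d(v,e_j)=a_j$ sit at the same vertex $v$ with $e_i\neq e_j$, these are constraints on two \emph{different} linear combinations of the edge colors around $v$, so they are neither redundant when $a_i=a_j$ nor contradictory when $a_i\neq a_j$; deleting one changes $V$. The correct move is to add the two equations to obtain $2d(e_k)=a_i+a_j$ for the third edge $e_k$ at $v$ (whence $V=\emptyset$ if $a_i+a_j$ is odd), and then replace one of the vertex constraints by the edge constraint $d(e_k)=(a_i+a_j)/2$. Second, and more seriously, your plan for a vertex $v$ with some but not all adjacent edges constrained relies on the claim that an unconstrained edge at $v$ takes only finitely many values on $\Delta$ ``once everything else at $v$ is fixed.'' This is true when two of the three edges at $v$ are already constrained (the third is then bounded by their sum), but false when only one edge, say $e_1$ with $d(e_1)=a_1$, is constrained: the triangle inequalities only bound the \emph{difference} $|d(e_2)-d(e_3)|\leq a_1$, and both $d(e_2)$ and $d(e_3)$ range over an infinite set on $\Delta$. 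Splitting over values of an individual edge therefore produces an infinite union. The paper's resolution is to split over the finitely many values of the gate quantity $d(v,e_3)=d(e_1)+d(e_2)-d(e_3)\in\{0,2,\dots,2a_1\}$, i.e.\ to introduce a \emph{directed-vertex} constraint rather than an edge constraint --- this is precisely why Definition~\ref{def:nearadmissible} allows constraints of the form $d(v,e)=a$ at all. Your proof needs both of these corrections; with them, the termination bookkeeping you sketch goes through essentially as in the paper.
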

\begin{proof}
	Call $x_i$ and $a_i$ the choices used to define $V$. Notice that if $x_i=x_j=e_i$ then either $V$ is empty or one equation is redundant, hence can be tossed out.
	
	Suppose that there are $i\neq j$ such that $v_i=v_j$; if $e_i=e_j$ and $a_i=a_j$ then the equation is redundant and can be tossed out, whereas if $a_i\neq a_j$ then $V$ is empty. So suppose that $e_i\neq e_j$. Then two of the equations characterizing $V$ are $d(e_k)+d(e_j)-d(e_i)=a_i$ and $d(e_k)+d(e_i)-d(e_j)=a_j$, where $e_k$ is the third edge adjacent to $v_i$. Summing these two equations together we find that $2d(e_k)=a_i+a_j$, which implies either $V$ empty (if $a_i+a_j$ is odd) or $d(e_k)=(a_i+a_j)/2$. Therefore we can toss out one among $x_i,x_j$ and substitute it (without changing $V$) with $e_k$ and $a_k:=(a_i+a_j)/2$. Repeat this process until the choices for $V$ satisfy the second condition of simplicity.
	
	Now suppose that $v$ does not appear as a vertex among the $x_i$'s, and call $e_1,e_2,e_3$ the edges adjacent to $v$. Suppose that $e_1$ appears among the $x_i$'s and the other two do not. Then the triangular inequalities defining $\Delta$ say that $d(e_2)+a_1-d(e_3)\geq 0$ and $d(e_3)+a_1-d(e_2)\geq 0$, which implies in particular that $d(e_2)-d(e_3)\leq a_1$. Therefore we can add to the equations defining $V$ the inequality $0\leq d(e_2)+d(e_1)-d(e_3)\leq 2a_1$ without changing $V$. Therefore $V=V_1\sqcup \dots V_{a_1}$ where $V_k$ is defined by the same equations as $V$ plus the additional equation $d(v,e_3)=2k$.
	
	Now take $v$ any vertex, call as before $e_1,e_2,e_3$ the adjacent edges and suppose that $e_1$ and $e_2$ appear among the $x_i$'s and $e_3$ does not. Then the triangular inequalities at $v$ imply that $e_3$ must be at most $a_1+a_2$, therefore we can decompose $V$ into subspaces where $e_3$ appears among the $x_i$'s.
	
	Suppose that $x_i=(v_i,e_i)$ and $e_i$ coincides with another edge adjacent to $v_i$; then the equation given by $x_i$ reduces to $d(e)=a_i$, where $e$ is the remaining edge; thus we can replace $x_i$ with $e$.
	
	Now suppose that $v$ appears as a vertex among the $x_i$'s, with chosen edge $e$, and suppose that $e$ appears among the $x_i$'s. Then in $V$ we have $d(v,e)=a$ and $d(e)=b$, which implies that $d(e_1)+d(e_2)= a+b$, and since both $d(e_1)$ and $d(e_2)$ are non-negative, $V$ is contained in a finite union of barely admissible subspaces where $e_1$ and $e_2$ are both among the $x_i$'s. 
	
	Finally suppose that $v$ appears as a vertex among the  $x_i$'s, with chosen edge $e$, and both of the other edges adjacent to $v$ appear among the $x_i$'s. Then this determines the value of $e$ in $V$, hence we can replace $v$ with $e$ without changing $V$. This also applies if a loop edge adjacent to $v$ appears among the $x_i$s.
\end{proof}

\subsection{The singular and flow subgraphs}

\begin{figure}
	\includegraphics[scale=0.3]{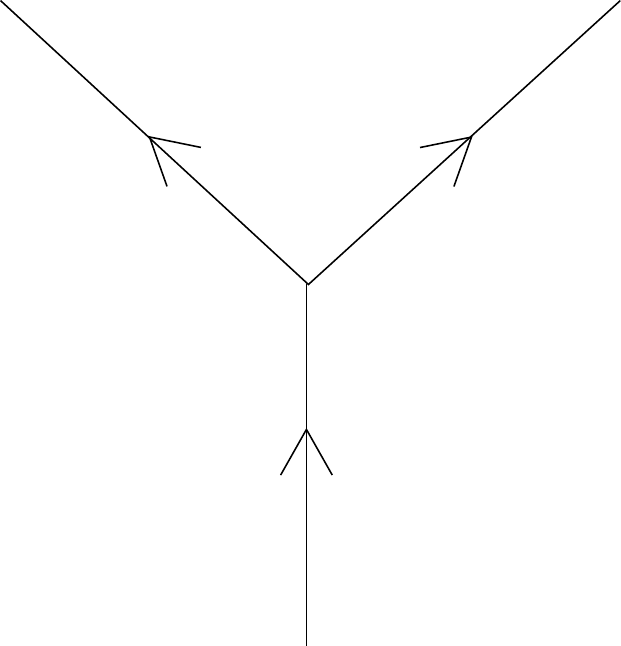}
	\caption{The direction of half edges around a vertex of $\Phi_V$; the upwards edge is the one appearing with a negative sign in the defining equations of $V$}
	\label{fig:directededge}
\end{figure}
\begin{definition}\label{def:subgraphs}
	If $V$ is a simple barely admissible subspace characterized by the equations $\{d(x_i)=a_i\}$, we can associate to it two subgraphs of $\Gamma$, the singular subgraph $\Gamma_V$ and the flow subgraph $\Phi_V$.
	
	The \emph{singular subgraph} is defined as the union of all edges appearing among the $x_i$'s; we will draw it with dashed edges.
	
	The \emph{flow subgraph} $\Phi_V$ is defined as the closure of the union of the non-singular edges adjacent to each vertex appearing among the $x_i$'s; the signs of each adjacent edge of $v$ gives an orientation on the half edges adjacent to $v$ as in Figure \ref{fig:directededge}. This makes $\Phi_V$ into a bidirected graph; we draw it depicting the orientations.
	
	Denote with $R_V$ the closure of the complement of $\Gamma_V\cup\Phi_V$ in $\Gamma$, and call it the regular subgraph of $V$.
	
\end{definition}

We can define a partial order on the vertices of the flow subgraph; we say that $v_1\succ v_2$ if there is a directed path from $v_1$ to $v_2$, and extend it to a total order on the $x_i$'s (so that all the vertices appear after the edges); call this order $\succ$.

\begin{lemma}\label{lemma:codimension}
	If $V$ is a simple barely admissible subspace characterized by the equations $\{d(x_i)=a_i\}$ for $i=1,\dots,k$, then its codimension is $k$.
\end{lemma}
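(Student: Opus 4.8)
The codimension of $V$ is, by definition, the rank of the system of $k$ linear equations $d(x_i)=a_i$ that cut it out of $\Z^{\mathcal E}$. Writing $L_i\colon\R^{\mathcal E}\to\R$ for the linear functional $d\mapsto d(x_i)$ (so $L_i$ is the coordinate $d\mapsto d(e_i)$ when $x_i=e_i$ is an edge, and $L_i=d(e')+d(e'')-d(e)$ when $x_i=(v_i,e)$ with $e',e''$ the other two half-edges at $v_i$), this rank is always $\le k$, so it suffices to prove that $L_1,\dots,L_k$ are linearly independent. The plan is to do this by induction on $k$.

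\textbf{The inductive step.} The heart of the matter is the following claim: there is always an index $i$ and an edge $f$ of $\Gamma$ such that the coefficient of $d(f)$ in $L_i$ is nonzero while the coefficient of $d(f)$ in $L_j$ vanishes for every $j\neq i$. Granting this, any relation $\sum_j c_jL_j=0$ forces $c_i=0$ by inspecting the $d(f)$-coefficient, so $L_i\notin\mathrm{Span}(L_j\colon j\neq i)$; since discarding one defining equation of a simple barely admissible subspace leaves a barely admissible subspace still satisfying the simplicity conditions used below (namely distinctness of the $x_j$ and conditions $5$, $6$, $7$ of Definition \ref{def:simpleNAS}), the remaining $k-1$ functionals are linearly independent by the induction hypothesis, and hence so are all $k$. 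The base case $k=0$ is trivial.

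\textbf{Existence of a privately–owned edge.} Suppose for contradiction that every edge of $\Gamma$ occurring with nonzero coefficient in some $L_j$ occurs in at least two of them. Conditions $4$ and $7$ imply that a loop edge $\ell$ at a vertex $w$ occurs in at most one of the $L_j$ (in the coordinate functional $d\mapsto d(\ell)$ if $\ell$ is singular, or in $L_j$ when $x_j=(w,\cdot)$, but never both, since then $\ell$ would be a non-selected, hence non-loop, edge — contradicting condition $4$). So under our assumption no loop edge occurs at all, every tip vertex of the lollipop tree $\Gamma$ is unconstrained, and all the $L_j$ are supported on the edges of the underlying tree $\mathring{\Gamma}$. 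Let $G\subseteq\mathring{\Gamma}$ be the subforest spanned by the edges occurring in the $L_j$; this is exactly the edge set of $\Gamma_V\cup\Phi_V$. Every leaf of $G$ is unconstrained, because a constrained vertex has all three of its (non-loop) edges in $G$ and hence degree $3$ there. Thus a leaf edge $f=zz'$ of $G$ ($z$ the leaf) occurs in $L_j$ only when $x_j=(z',\cdot)$, so "occurs at least twice'' forces $f$ to be singular \emph{and} $z'$ to be constrained; by condition $5$, $f$ is then a non-selected edge at $z'$, and by condition $6$ the other two edges at $z'$ are non-singular. Now root a connected component of $G$ at a leaf $z_0$: its unique neighbour $z_0'$ is constrained. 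For any constrained vertex $v$ of $G$ other than the root, $v$ has degree $3$, hence two child-edges; at most one of them is singular by condition $6$, so some child-edge $g=vc$ is non-singular, and then $g$ can occur in two of the $L_j$ only if $c$ is constrained. Iterating produces an infinite strictly descending chain $z_0',v_1,v_2,\dots$ of constrained vertices of the finite tree $\mathring{\Gamma}$ — a contradiction. This yields the required index $i$ and completes the induction.

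\textbf{Main obstacle.} The delicate point is the combinatorial "peeling'' argument just sketched: it is where one must use all the simplicity conditions simultaneously, together with the fact that $\Gamma$ is a tree with loops only at leaves — the statement genuinely fails for trivalent graphs with cycles (e.g. the theta graph, where constraining both vertices along the same edge gives identical functionals). The passage to linear independence and the treatment of loop edges are routine by comparison.
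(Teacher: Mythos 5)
Your proof is correct, and it rests on the same reduction as the paper's: the codimension is $k$ once the $k$ defining functionals (equivalently, the dual vectors $\delta_{e_i}$ or $\delta_{e_1}+\delta_{e_2}-\delta_{e}$) are shown to be linearly independent, and both arguments achieve this by exhibiting an edge that is ``seen'' by exactly one relevant constraint. The execution of that combinatorial step, however, is genuinely different. The paper works directly with a given vanishing combination $\sum_i \lambda_i L_i=0$: it restricts the flow subgraph $\Phi_V$ to the vertices carrying a nonzero coefficient, takes a leaf edge of that restricted subgraph (which, being a flow edge, is not singular and therefore receives a contribution from exactly one nonzero term), and disposes of the remaining case --- only edge-constraints with nonzero coefficients --- by noting that distinct coordinate functionals are independent. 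That argument needs only the distinctness conditions and the fact that flow edges are non-singular. You instead prove the stronger, support-independent statement that the whole system always has a ``privately-owned'' edge, via a contradiction and an infinite descending chain of constrained vertices in the rooted tree $\mathring{\Gamma}$, and then induct on $k$. I checked your peeling argument and it is sound, including the handling of loop edges and the fact that the conditions you actually use survive deletion of an equation; one small correction is that your loop-edge step also invokes condition 4 of Definition \ref{def:simpleNAS}, so it should be added to the list of conditions preserved in the induction (it is, trivially). What each approach buys: the paper's localisation to the support of the relation makes a leaf available immediately and yields a two-line proof, whereas your global descent is longer and consumes more of the simplicity hypotheses (conditions 4, 5, 6 and 7); in exchange, you isolate a cleaner standalone combinatorial fact --- every such constraint system owns an edge appearing in exactly one constraint --- which the paper's support-dependent argument does not provide.
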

\begin{proof}
	Since $V$ is realized as an intersection of affine hyperplanes, we need only to show that the dual vectors $x_i$ to the supporting hyperplanes are linearly independent in $\Z^\mathcal{E}$. These vectors are of the form $e_i$ (if the hyperplane is of the form $d(e_i)=a_i$) or $e_i+e_j-e_k$ (if the hyperplane is of the form $d(v_i,e_k)=a_i$). Suppose that $\lambda:=\sum_i\lambda_i x_i=0$; if we show that in any such sum, there is one edge $e$ with non-zero coefficient which only appears once, we have our desired result (since the edges themselves are linearly independent).
	
	To see this, first consider $\Phi'$ the subgraph of the flow subgraph $\Phi_V$ spanned by the vertices with non-zero coefficient in $\lambda$. If this is non-empty, take $e$ a leaf of $\Phi'$; it cannot be a dashed edge (since they are not part of $\Phi_V$) hence it can only appear in $\lambda$ once.
	
	Suppose instead that $\Phi'$ is empty; then any edge in $\lambda$ corresponds to an edge of the singular subgraph and thus can only appear once in $\lambda$.
	
\end{proof}

From now on, we fix $V$ a codimension $k$ simple barely admissible subspace of $\Delta$; we call $x_1,\dots,x_k$ the choices of edge or directed vertex defining $V$, ordered according to $\succ$. We also call $f:\Z^\mathcal{E} \ra\Z^{k}$ the function given by $f(d)=(d(x_1),\dots,d(x_k))$. We consider $\Z^k$ to be ordered under the lexicographic order induced by $\succ$. 

Consider the set $\mathcal{E}'$ given by:

\begin{itemize}
	\item The regular edges of $\Gamma$;
	\item the introverted edges of $\Phi_V$;
	\item the leaves of $\Phi_V$.
\end{itemize}

Notice that the cardinality of $\mathcal{E}'$ is larger than $\textrm{dim}(V)$; more precisely the dimension of $V$ is equal to the cardinality of $\mathcal{E}'$ minus the number of extraverted edges of $\Phi_V$ (this will be proved in Lemma \ref{lemma:basisV}).

We define the \emph{orthogonal map} as the map $g:\Z^\mathcal{E}\ra \Z^{\mathcal{E}'}$ given by $g(\delta_e)=\delta_e$ if $e$ is an introvorted edge of $\Phi_V$, a non-loop leaf of $\Phi_V$, or a regular edge, $g(\delta_e)=2\delta_e$ if $e$ is a loop edge of $\Phi_V$ and $0$ otherwise.

Finally we fix an additional ordering $\succ'$ that we will use to select certain edges later. Suppose $e_1,\dots,e_l$ are a maximal collection of introverted edges of $\Phi_V$ such that $e_i\cap e_{i+1}=v_i$; then we say that $e_i\succ'e_j$ if $i<j$. Notice that since there are no cycles in $\Phi_V$ this gives a total order on the $e_i$s. Extend this order arbitrarily to an order, still denoted $\succ'$, of all regular edges of $\Gamma$, introverted edges and leaves of $\Phi_V$.

\subsection{Operators adapted to barely admissible subspaces}
\label{sec:curvop-nearadm}
In this subsection we build some operators that, in some sense, behave well with respect to a given barely admissible subspace; we will use them to, roughly speaking, push a color in $V$ further into $\Delta$. We build these operators from the singular and flow subgraphs of $\Gamma$, using the curve operators of Lemmas \ref{lemma:curveActions} and \ref{lemma:longcurveactions}. We first start with a very basic lemma about curve operators.

\begin{lemma}\label{lemma:minshiftdashed}
	Let $e\subseteq \Gamma_V$ be a dashed edge; then the operator $T^{\beta_e}$ has a unique minimal shift given by $-\delta_e$ if $e$ is a loop edge, and $-2\delta_e$ otherwise. 
\end{lemma}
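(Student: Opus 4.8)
The plan is to apply Lemma \ref{lemma:curveActions} directly, since $e$ is a single edge of $\Gamma$ (which is a lollipop tree), and read off which shifts occur. I would split into the two cases appearing in that lemma.

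First, suppose $e$ is a loop edge, and let $f$ be its unique adjacent edge. By Lemma \ref{lemma:curveActions},
$$T^{\beta_e}\varphi_c=\varphi_{c+\delta_e}+F_{-1}(A,A^{c_e},A^{c_f})\varphi_{c-\delta_e},$$
so the only two shifts of $T^{\beta_e}$ are $+\delta_e$ and $-\delta_e$ (the coefficient of $\varphi_{c+\delta_e}$ being $1\neq 0$ and, for $c$ in a large subset, $c-\delta_e$ is admissible so $F_{-1}(A,A^{c_e},A^{c_f})\neq 0$). Applying the map $f\colon\Z^{\mathcal E}\to\Z^k$, which here has first coordinate $d\mapsto d(e)$ (as $e=e_1$ is among the defining choices $x_1,\dots,x_k$ of $V$, and $\succ$ orders the dashed edges first), we get $f(+\delta_e)$ has first coordinate $+1$ and $f(-\delta_e)$ has first coordinate $-1$; under the lexicographic order on $\Z^k$ the shift $-\delta_e$ is therefore strictly smaller, so it is the unique minimal shift.

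Next, suppose $e$ is a non-loop edge, with adjacent edges $f_1,f_2,f_3,f_4$. By Lemma \ref{lemma:curveActions},
$$T^{\beta_e}\varphi_c=R_2\,\varphi_{c+2\delta_e}+R_0\,\varphi_c+R_{-2}\,\varphi_{c-2\delta_e},$$
with $R_{\pm 2}$ evaluated at $(A,A^{c_e},A^{c_{f_1}},\dots,A^{c_{f_4}})$ nonzero whenever $c\pm 2\delta_e$ is admissible. Hence the shifts of $T^{\beta_e}$ are among $\{2\delta_e,0,-2\delta_e\}$, and for $c$ in a large subset both $c+2\delta_e$ and $c-2\delta_e$ are admissible, so $R_{\pm 2}\neq 0$ and the shift set is exactly $\{2\delta_e,0,-2\delta_e\}$. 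Again $f(-2\delta_e)$ has first coordinate $-2$, strictly less than the first coordinates $0$ and $+2$ of $f(0)$ and $f(2\delta_e)$; since $\Z^k$ carries the lexicographic order this forces $-2\delta_e$ to be the unique minimal shift.

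**Main obstacle.** The one point that needs care is that the minimal-shift machinery requires the shift set to be independent of the large subset (guaranteed by \cite[Lemma 2.4]{DS25}) and that we are comparing via the correct $f$: since $e$ is a dashed edge it is one of the $x_i$ with $x_i=e_i$, and $\succ$ places all dashed edges before the vertices, so $d\mapsto d(e)$ really is one of the coordinates of $f$ — in fact its value strictly decreases along the shift $-\delta_e$ (resp. $-2\delta_e$) while the other shifts leave that coordinate nondecreasing, which is what pins down uniqueness lexicographically. This is essentially bookkeeping rather than a genuine difficulty; the substantive input is entirely Lemma \ref{lemma:curveActions}.
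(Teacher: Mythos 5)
Your proof is correct and follows the same route as the paper: read the shifts off Lemma \ref{lemma:curveActions}, note that the coordinate of $f$ corresponding to the singular edge $e$ (which precedes all vertex coordinates in the order $\succ$) strictly decreases only for the negative shift, and conclude by lexicographic minimality. You are in fact slightly more careful than the paper's one-line argument, since you treat the loop-edge case (shifts $\pm\delta_e$) explicitly and justify the non-vanishing of the extreme coefficients on a large subset.
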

\begin{proof}
	By Lemma \ref{lemma:curveActions}, the shifts in $T^{\beta_e}$ are $0$ and $\pm 2\delta_e$, and $f(\delta_e)=\delta_e$, therefore the unique minimal shift of $T^{\beta_e}$ is $-2\delta_e$.
\end{proof}

The problem of using the operator $T^{\beta_e}$ for our purposes is that, if $\Gamma_V$ contains connected components of more than one edge, there will be no admissible color $c$ such that $\varphi_{c-2\delta_{e}}\in V$, or in other words, no element of $V$ is effective for $T^{\beta_e}$. Therefore we need to use more complicated operators that are adapted to $V$.

\begin{definition}\label{dfn:pathop}
	Consider a path $\gamma\subseteq \Gamma$. We define the \emph{path operator} $T^{\gamma}$ as $$T^{\gamma}:=T^{\beta_{\mathring{\gamma}}}T^{\beta_{\partial\gamma}}.$$
\end{definition}

The next lemmas act as a definition of the operators we are looking for; the proofs are all very similar so we will carry out in detail the first, and then simply point out the subtle differences for the others.

\begin{lemma}\label{lemma:extopfirst}
	Let $\gamma\subseteq \Gamma_V$ be a maximal arc, and consider the path operator $T^{\gamma}:\Sk(H)\ra\Sk(H)$.
	
	Then:
	\begin{itemize}
		\item $T^\gamma$ has a unique minimal shift $x$;
		\item $f(x)(e)=-1$ if $e$ is a loop edge of $\gamma$, $f(x)(e)=-2$ if $e$ belongs to $\mathring{\gamma}$, $f(x)(v)=-2$ if $v$ is an endpoint of $\gamma$, $f(x)(y)=0$ otherwise and $g(x)=0$.
		\item there is a finite union of barely admissible subspaces $W\subseteq V$, each of codimension $1$, such that for all $y\in V\setminus W$, $y$ is effective for $T^\gamma$ and $y-x\in \Delta\setminus V$.
	\end{itemize}
\end{lemma}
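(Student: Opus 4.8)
Write the maximal arc as $\mathring{\gamma}=e_1\cup\dots\cup e_n$, a chain of consecutive non-loop edges (with $n=0$ permitted), and let $\partial\gamma$ consist of the (zero, one or two) loop edges capping the ends of $\gamma$, so that by Definition \ref{dfn:pathop}
$T^\gamma=T^{\beta_{\mathring{\gamma}}}\,\prod_{\ell\in\partial\gamma}T^{\beta_\ell}$, all factors commuting since the loop edges are disjoint. The plan is to show each factor has a unique minimal shift with respect to $f$, that the corresponding $f$-images are supported on pairwise disjoint coordinates, and then to conclude with Corollary \ref{cor:uniqueminimalshift}; the values of $f(x)$ and $g(x)$ will fall out of this computation, and the subspace $W$ will collect the loci where effectiveness fails or where $y-x$ leaves $\Delta$.

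\textbf{The minimal shift.} By Lemma \ref{lemma:minshiftdashed} each $T^{\beta_\ell}$ with $\ell\in\partial\gamma$ has unique minimal shift $-\delta_\ell$; since $\ell$ is dashed it is among the $x_i$, so by the last simplicity clause of Definition \ref{def:simpleNAS} the base vertex of $\ell$ is not a directed vertex of $V$, whence the only $x_i$ involving $\ell$ is $\ell$ itself and $f(-\delta_\ell)=-\delta_\ell$ in the $\ell$-coordinate, in particular non-positive. For $T^{\beta_{\mathring{\gamma}}}$, Lemma \ref{lemma:longcurveactions} says every shift has the form $\sum_i\varepsilon_i\delta_{e_i}$ with $\varepsilon_i\in\{-2,0,2\}$ and that the choice $\varepsilon_i=-2$ for all $i$ (which lies in $\{\pm2\}^n$) occurs with non-zero coefficient; each $e_i\in\Gamma_V$ equals exactly one $x_j$ (first simplicity clause), and the order $\succ$ lists all edge-type $x_j$ before the vertex-type ones, so the leading coordinates of $f$ read off the $\varepsilon_i$, forcing the lexicographic minimum to be attained exactly at $-2\sum_i\delta_{e_i}$. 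The $f$-images of these minimal shifts are supported on the coordinates of the loop edges, of the $e_i$, and of the endpoint vertices of $\gamma$ respectively, hence are linearly independent, so Corollary \ref{cor:uniqueminimalshift} gives that $T^\gamma$ has the unique minimal shift
$x=-2\sum_{i=1}^n\delta_{e_i}-\sum_{\ell\in\partial\gamma}\delta_\ell$.
Evaluating $f$: on the coordinate of a loop edge of $\gamma$ it is $-1$, on that of an edge of $\mathring{\gamma}$ it is $-2$; a directed vertex of $V$ lying on $\gamma$ must be an endpoint of $\gamma$ (only one of its edges, the dashed one $e_n$, may appear among the $x_i$, by the sixth simplicity clause), and there the $-$-sign edge $e'$ is not $e_n$ (since $e_n$ is an edge among the $x_i$, forbidden by the fifth and sixth clauses), so $e_n$ is a $+$-sign edge and $f(x)(v,e')=x(e_n)=-2$; all remaining coordinates of $f$ vanish on $x$. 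Finally $x$ is supported on edges of $\Gamma_V$, which are neither regular nor in $\Phi_V$, so $g(x)=0$ by the definition of the orthogonal map.

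\textbf{The exceptional set $W$.} For effectiveness, observe that a color in $V$ pushed far along a direction of the recession cone of $V$ (which lies in the cone $\Delta$, so has non-negative entries) is effective for $T^\gamma$: it then lies in the large subset where \eqref{eqn:operatorformula} holds, and adding $-x\geq 0$ keeps it there. Fixing such an effective $c_0\in V$, Lemma \ref{lemma:effectiveness} makes every $y\in V$ with $y\geq c_0$ effective, and $\{y\in V:\, y(e)<c_0(e)\}$ is, for each of the finitely many edges $e$ and each $0\le k<c_0(e)$, the barely admissible subspace $\{y\in V:\,d(e)=k\}$, of codimension $\geq 1$. For the condition $y-x\in\Delta$: adding $-x\geq 0$ cannot decrease any edge-value, and a direct vertex-by-vertex check (treating the degenerate cases: a single loop edge, a single non-loop edge, a chain capped by loops) shows it increases every directed-vertex value except $y(v,e_n)$ at an endpoint $v$ of $\gamma$, which drops by $2$; hence admissibility of $y-x$ fails only on the at most two barely admissible subspaces $\{y\in V:\,y(v,e_n)=0\}$, each of codimension $\geq 1$ in $V$. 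Let $W$ be the union of all of these. For $y\in V\setminus W$, $y$ is effective for $T^\gamma$ and $y-x\in\Delta$; moreover $y-x\notin V$, since $\gamma$ has at least one edge and $y-x$ therefore violates the equation $d(e_1)=a_{e_1}$ (or $d(\ell)=a_\ell$ if $\mathring{\gamma}=\emptyset$). This proves the three assertions.

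\textbf{Expected main obstacle.} The delicate point is the admissibility analysis in the last step: one must verify carefully, running through all degenerate shapes of a maximal arc, that adding $-x$ breaks admissibility only on a genuinely lower-dimensional locus, and in particular that $\{y\in V:\,y(v,e_n)=0\}$ is a proper codimension-$\geq 1$ barely admissible subspace of $V$ rather than all of $V$; this rules out $y(v,e_n)$ being constant on $V$, and uses the simplicity conditions together with the acyclicity of the flow subgraph $\Phi_V$. The bookkeeping between the orders $\succ$, $\succ'$ and the numerous clauses of Definition \ref{def:simpleNAS} is the other place where care is needed, but it is routine once the factorization of $T^\gamma$ is organized as above.
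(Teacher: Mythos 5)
Your proof is correct and follows essentially the same route as the paper: the unique minimal shift $x=-\sum_{\ell\in\partial\gamma}\delta_\ell-2\sum_{e\in\mathring{\gamma}}\delta_e$ via Lemmas \ref{lemma:longcurveactions} and \ref{lemma:uniqueminimalshift}, the same reading-off of $f(x)$ and $g(x)$, and the same vertex-by-vertex check that subtracting $x$ can only break admissibility at the endpoint constraints $d(v,e_n)=0$, which cut out proper codimension-$\geq 1$ subspaces of $V$ by simplicity. Your handling of effectiveness (building an explicit effective $c_0$ and applying Lemma \ref{lemma:effectiveness}) is slightly more elaborate than the paper's one-line appeal to Corollary \ref{cor:effectiveness} and produces a somewhat larger $W$, but this is harmless for the statement and its later use.
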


\begin{proof} By Lemmas \ref{lemma:longcurveactions} and \ref{lemma:uniqueminimalshift}, $T^{\gamma}$ has a unique minimal shift given by $x=-\sum_{e \textrm{ in }\partial\gamma} \delta_e-2\sum_{e \textrm{ in }\mathring{\gamma}} \delta_e$. The values of $f(x)$ and $g(x)$ follow at once. We need to prove the last property.
	
	Take $y\in V$; we know that clearly $y-x\notin V$, therefore we want to show that $y-x\in \Delta$ unless $y$ belongs to $W$, where $W$ is some union of codimension $1$ barely admissible subspaces, i.e. the color $y-x$ satisfies the triangular inequalities (since the non-negative and evenness conditions are automatic, as $x$ has negative entries which are even on non-loop edges). Consider the triangular inequalities corresponding to a vertex $v$. If $\gamma$ does not contain $v$, then clearly if $y$ satisfies the triangular inequality at $v$ then so does $y-x$. If $\gamma$ contains $v$ in its interior, then the same holds. Indeed, if $v$ is not adjacent to a loop edge, $-x$ is adding $2$ to two edges out of three adjacent to $v$; this means that either $2$ is added to both sides of the inequality, or $4$ is added to the larger side. If $v$ is adjacent to a loop edge then $2$ is added to both sides of the inequality. Finally, if $v$ is an endpoint of $\gamma$, then $-x$ is adding $2$ to one side of the inequality; the only case to consider is if it is adding it to the smaller side. If $y$ is such that $y(e_i)+y(e_j)\geq y(e_l)$ but $y(e_i)+y(e_j)\ngeq y(e_l)+2$, then it means that $y(e_i)+y(e_j)=y(e_l)$ and therefore $y$ belongs to the subspace of $V$ given by this extra equation (notice that the subspace is proper because of the simplicity assumption). Repeating this reasoning for the other endpoint (if any) of $\gamma$ shows that if $x$ does not belong to the union of these two barely admissible subspaces, $y-x\in \Delta$. The fact that $y$ is effective for $T^\gamma$ follows from Lemma \ref{lemma:longcurveactions} and Corollary \ref{cor:effectiveness}.
\end{proof}

\begin{lemma}\label{lemma:extopsecond}
	Let $v\in\Phi_V$ be a vertex and $\gamma_v$ be a maximal directed path starting from $v$. Notice that the final edge of $\gamma_v$ is either a leaf of $\Phi_V$ or an introverted edge. Define the path operator
	$T^{v}:=T^{\gamma_v}$.
	Then
	
	\begin{itemize}
		\item $T^{v}$ has a unique minimal shift $x$;
		\item $f(x)(w)=-2$ if $w$ is an endpoint of $\gamma_v$ in $\Phi_V$,  $f(x)(v')=0$ if $v'$ is not, and $f(x)(e)=0$;
		\item $g(x)(e)=-2$ if $e$ is the final edge of $\gamma_v$, $0$ otherwise.
		\item there is a union of barely admissible subspaces $W\subseteq V$, each of codimension $1$, such that for all $y\in V\setminus W$, $y$ is effective for $T^\gamma$ and $y-x\in \Delta\setminus V$.
	\end{itemize}
\end{lemma}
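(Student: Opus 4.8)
The plan is to follow the template of the proof of Lemma \ref{lemma:extopfirst}. The new feature is that $\gamma_v$ lies in the flow subgraph $\Phi_V$ rather than in the singular subgraph $\Gamma_V$, so its edges are not coordinates of $f$ and interact with $f$ only through the defining equations $d(v_j,e_j)=d(e_j^{(1)})+d(e_j^{(2)})-d(e_j)=a_j$ at the directed vertices of $V$ that they meet. The fact that makes everything run is that every interior vertex of a maximal directed path in $\Phi_V$ is a directed vertex of $V$: at a vertex of $\Phi_V$ which is not a directed vertex of $V$, all incident $\Phi_V$-edges are undirected at it (they are leaves or roots of $\Phi_V$), so a directed path cannot pass through it; this, together with the fact that an introverted edge can be traversed through only one of its half-edges, is why $\gamma_v$ must end at a leaf of $\Phi_V$ or at an introverted edge.

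First I would compute the minimal shift. Writing $T^v=T^{\beta_{\mathring{\gamma_v}}}T^{\beta_{\partial\gamma_v}}$ as in Definition \ref{dfn:pathop}, Lemma \ref{lemma:longcurveactions} gives the shifts of $T^{\beta_{\mathring{\gamma_v}}}$ as the vectors $\sum_i\epsilon_i\delta_{e_i}$ with $\epsilon_i\in\{0,\pm2\}$, and the loop case of Lemma \ref{lemma:curveActions} handles the possible loop edge coming from $T^{\beta_{\partial\gamma_v}}$; combining the two factors via Lemma \ref{lemma:uniqueminimalshift} and Corollary \ref{cor:uniqueminimalshift}, the assertion is that $x=-\sum_{e\in\partial\gamma_v}\delta_e-2\sum_{e\in\mathring{\gamma_v}}\delta_e$ is the unique $f$-minimal shift. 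The uniqueness is obtained by tracing $\gamma_v$ from $v$: at each interior vertex $v_j$, which is a directed vertex of $V$, the two edges of $\gamma_v$ at $v_j$ are its unique incoming edge and one of its two outgoing edges, so they occur with opposite signs in the equation at $v_j$ and $f\big(\sum_i\epsilon_i\delta_{e_i}\big)(v_j)=\epsilon_{\mathrm{out}}-\epsilon_{\mathrm{in}}$; minimizing the coordinates of $f$ in the order $\succ$, which runs through the directed vertices of $\gamma_v$ in the order they are met starting from $v$, then forces by telescoping $\epsilon_i=-2$ for every edge of $\gamma_v$ (and $-1$ on the loop edge, if present).

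The values of $f(x)$ and $g(x)$ in the second and third bullets are then immediate: $f(x)$ vanishes on singular edges (disjoint from $\gamma_v$) and on interior directed vertices of $\gamma_v$ (by the cancellation above) and equals $-2$ on the directed-vertex endpoint(s) of $\gamma_v$; and $g(x)$ vanishes on each non-final edge of $\gamma_v$, which is traversed through both half-edges and is therefore a directed edge of $\Phi_V$, on which $g$ is zero, while on the final edge a case check over the three possibilities (non-loop leaf of $\Phi_V$, introverted non-loop edge, introverted loop edge, the last with $g(\delta_e)=2\delta_e$ and $x(e)=-1$) gives $g(x)=-2$. For the last bullet I would argue exactly as in Lemma \ref{lemma:extopfirst}: one has $y-x\notin V$ for all $y$ because $f(x)$ is nonzero at the coordinate attached to $v$; the inclusion $y-x\in\Delta$ fails only for $y$ in a finite union $W$ of codimension-$1$ barely admissible subspaces, which one sees by checking the triangular inequalities vertex by vertex (at vertices not met by $\gamma_v$ nothing changes; at interior vertices of $\gamma_v$, $-x$ adds $2$ to two of the three incident colors, or $2$ to both sides when a loop edge is involved, so the inequalities persist; at the endpoint(s), $-x$ adds $2$ to a single color, and the only inequalities that can fail were already equalities on $V$, cutting out proper — hence codimension-$1$ — barely admissible subspaces by simplicity of $V$); and effectiveness of $y$ for $T^v$ follows from Lemma \ref{lemma:longcurveactions} and Corollary \ref{cor:effectiveness}.

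The main obstacle is this first step. Unlike in Lemma \ref{lemma:extopfirst}, where the edges of $\gamma$ are themselves coordinates of $f$ and the minimal shift is visibly unique, here one must rule out any competing choice of the signs $\epsilon_i$ with $f$-value at most $f(x)$, and this genuinely rests on the combinatorics of directed paths in a bidirected graph — the terminality of boundary vertices of $\Phi_V$ and the telescoping produced by the defining equations at the interior vertices — together with the careful bookkeeping of the three types of final edge and of any loop edge occurring along $\gamma_v$.
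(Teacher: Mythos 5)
Your proposal is correct and follows essentially the same route as the paper: it uses the decomposition $T^{v}=T^{\beta_{\mathring{\gamma_v}}}T^{\beta_{\partial\gamma_v}}$ together with Lemmas \ref{lemma:curveActions}, \ref{lemma:longcurveactions} and \ref{lemma:uniqueminimalshift} to identify the unique minimal shift, and repeats the triangular-inequality and effectiveness argument of Lemma \ref{lemma:extopfirst} for the last bullet. Your explicit telescoping argument for the lexicographic minimality (forced by the fact that the first edge is outgoing at $v$, in contrast to Lemma \ref{lemma:intopsecond} where the first edge is incoming at the top vertex and therefore picks up a $+2$) is exactly the computation the paper leaves implicit, so this is a faithful, slightly more detailed version of the paper's proof.
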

\begin{proof}
	The fact that $T^{v}$ has a unique minimal shift equal to $$x=-\sum_{e \textrm{ edge of }\partial\gamma_v}\delta_e-2\sum_{e \textrm{ edge of }\mathring{\gamma}_v} \delta_e$$ is a consequence of Lemmas \ref{lemma:curveActions} and \ref{lemma:uniqueminimalshift}. This means that if $w$ is an endpoint of $\gamma_v$, $w$ has only one adjacent edge in the sum, thus $f(x)(w)=-2$. For every other vertex $v'\in \Phi_V$, either $v'\notin \gamma_v$, or exactly two edges in $\gamma_v$ are adjacent to $v'$ and they appear with opposite signs, or $v'$ is in $\gamma_v$ and adjacent to a loop edge; in all cases $f(x)(v')=0$. The other calculations about $f$ and $g$ are obvious.
	
	The proof of the last part of the result follows from the same line of reasoning as Lemma \ref{lemma:extopfirst}.
	
\end{proof}

\begin{lemma}\label{lemma:intopfirst}
	Let $e\subseteq R_V$ be a regular edge, and define $T^e:=\left(T^{\beta_e}\right)^2$ if $e$ is a loop edge, and $T^e:=T^{\beta_e}$ otherwise. Then 
	
	\begin{itemize}
		\item $T^e$ has two different non-zero minimal shifts $2\delta_e$ and $-2\delta_e$;
		\item there is a union of barely admissible subspaces $W\subseteq V$, each of codimension $1$, such that for all $y\in V\setminus W$ and any $x$ minimal shifts of $T^e$, $y$ is effective for $T^e$ and $y-x\in V$.
	\end{itemize}
\end{lemma}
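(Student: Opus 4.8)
The plan is to imitate the proofs of Lemma~\ref{lemma:extopfirst} and Lemma~\ref{lemma:extopsecond}, the key simplifying feature being that $e$ is a \emph{regular} edge. By Definition~\ref{def:subgraphs}, a regular edge is neither one of the singular edges defining $V$ nor adjacent to any vertex appearing among the $x_i$'s (such an edge would lie in $\Phi_V$), so $\delta_e(x_i)=0$ for every $i$ and hence $f(\delta_e)=0$. In particular the equations cutting out $V$ do not involve the $e$-coordinate: for any $y\in\Z^{\mathcal E}$ the color $y+t\delta_e$ satisfies those equations for every $t\in\Z$, so $y\pm 2\delta_e\in V$ is equivalent to $y\pm 2\delta_e\in\Delta$. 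This observation replaces the ``push into $\Delta$'' mechanism of the earlier lemmas: here the shifts $\pm 2\delta_e$ move \emph{within} $V$ rather than out of it.

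For the first bullet, when $e$ is not a loop edge we read off from Lemma~\ref{lemma:curveActions} that the shifts of $T^e=T^{\beta_e}$ are $0$ and $\pm 2\delta_e$, with $R_{\pm 2}$ not identically zero (they are non-zero whenever the target color is admissible). When $e$ is a loop edge with adjacent edge $f$, we expand $(T^{\beta_e})^2\varphi_c$ using $T^{\beta_e}\varphi_c=\varphi_{c+\delta_e}+F_{-1}(A,A^{c_e},A^{c_f})\varphi_{c-\delta_e}$: the coefficient of $\varphi_{c+2\delta_e}$ is $1$ and that of $\varphi_{c-2\delta_e}$ is $F_{-1}(A,A^{c_e},A^{c_f})F_{-1}(A,A^{c_e-1},A^{c_f})$, which is not identically zero since $F_{-1}$ is not. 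In either case the only shifts are $0,\pm 2\delta_e$; since $f(\delta_e)=0$ they all share the same $f$-value, which is therefore minimal, so the non-zero minimal shifts are exactly $2\delta_e$ and $-2\delta_e$, which are distinct.

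For the second bullet, exactly as in Lemma~\ref{lemma:extopfirst} one checks the triangular inequalities affected by $\pm2\delta_e$, which are only those at the endpoint(s) of $e$ (non-negativity and parity being automatic). Writing $a,b$ (resp.\ $c,d$) for the other edges at an endpoint $v$ (resp.\ $w$) of a non-loop edge $e$, the color $y-2\delta_e$ leaves $\Delta$ only when one of $y(e),y(v,a),y(v,b),y(w,c),y(w,d)$ is too small, while $y+2\delta_e$ leaves $\Delta$ only when $y(v,e)=0$ or $y(w,e)=0$; for a loop edge $e$ at $v$ with remaining edge $f$ the inequality reads $d(f)\le 2d(e)$, so $y+2\delta_e$ is always admissible and $y-2\delta_e$ leaves $\Delta$ only when $y(e)\le 1$ or $y(v,f)\le 2$. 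Each such condition defines a barely admissible subspace of $V$, and it is \emph{proper}: all of these equations involve $d(e)$, which is unconstrained on $V$, so none of them is implied by the equations of $V$. Let $W$ be the (finite) union of these proper, hence codimension $\ge 1$, barely admissible subspaces; then $y\pm 2\delta_e\in V$ for every $y\in V\setminus W$. Finally, effectiveness follows as in the earlier lemmas from Lemma~\ref{lemma:curveActions} and Corollary~\ref{cor:effectiveness} (applied to $(T^{\beta_e})^2$ in the loop case): the operator formula \eqref{eqn:operatorformula} for $T^e$ holds on a large subset of the form $\{c\mid c,c\pm2\delta_e,\dots\in\Delta\}$, so $y$ is effective as soon as $y-x$ lies in this subset for \emph{one} of the two minimal shifts $x=\pm2\delta_e$; again because $d(e)$ is free on $V$, this fails only on a further finite union of proper barely admissible subspaces, which we absorb into $W$.

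The step I expect to be the main obstacle is this last bookkeeping: showing that the ``bad locus'' $W$ is genuinely a union of \emph{proper} barely admissible subspaces of $V$. Everything there rests on the single remark that $d(e)$ is unconstrained on $V$ (because $e$ is regular), which forces every facet-type equation at an endpoint of $e$ to cut $V$ strictly; the only extra care is needed in the loop case, where the triangular inequality $d(f)\le 2d(e)$ makes $y+2\delta_e$ automatically admissible and alters the thresholds for $y-2\delta_e$, and where one must first square $T^{\beta_e}$ so that its shifts become even multiples of $\delta_e$.
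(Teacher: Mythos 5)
Your proposal is correct and follows essentially the same route as the paper, which disposes of the first bullet by noting that all shifts of $T^e$ lie in the kernel of $f$ (your observation that $f(\delta_e)=0$ because $e$ is regular) and refers the second bullet back to the argument of Lemma~\ref{lemma:extopfirst}. You simply make explicit the details the paper elides, in particular that the defining equations of $V$ do not involve $d(e)$ (so $y\pm2\delta_e\in V$ reduces to $y\pm2\delta_e\in\Delta$ and the bad locus is cut out by proper barely admissible subspaces), and the expansion of $(T^{\beta_e})^2$ in the loop case.
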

\begin{proof}
	The first part of this Lemma is simply noting that the three shifts of $T^e$ given by Lemma \ref{lemma:curveActions} are all in the kernel of $f$. The second part is proved exactly the same way as Lemma \ref{lemma:extopfirst}.
\end{proof}
\begin{remark}
	The only reason for the square in the definition of $T^e$ is to simplify notation in later lemmas.
\end{remark}

\begin{lemma}\label{lemma:intopsecond}
	Let $e$ be a leaf of $\Phi_V$ and $\gamma_e\subseteq \Phi_V$ be the maximal directed path ending in $e$, and define the path operator $T^{\gamma_e}$. Then 
	
	\begin{itemize}
		\item $T^{\gamma_e}$ has a unique minimal shift $x$;
		\item if the first edge of $\gamma_e$ is directed, $f(x)=-4\delta_v$ where $v$ is the second vertex of $\gamma_e$;
		\item if the first edge of $\gamma_e$ is extraverted, $f(x)=-4(\delta_v+\delta_{v'})$, where $v,v'$ are the endpoints of the extraverted edge.
		\item $g(x)=-2\delta_e$;
		\item there is a union of barely admissible subspaces $W\subseteq V$, each of codimension $1$, such that for all $y\in V\setminus W$, $y$ is effective for $T^\gamma$ and $y-x\in \Delta$.
	\end{itemize}
\end{lemma}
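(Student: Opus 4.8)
The plan is to follow the three-step template of Lemmas \ref{lemma:extopfirst}, \ref{lemma:extopsecond} and \ref{lemma:intopfirst}: identify the unique minimal shift $x$, read off $f(x)$ and $g(x)$ from it, and then dispatch the effectiveness and triangular-inequality statement exactly as in the earlier lemmas.

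First I would establish existence and uniqueness of the minimal shift. Writing $T^{\gamma_e}=T^{\beta_{\mathring{\gamma}_e}}T^{\beta_{\partial\gamma_e}}$ as a product of loop operators $T^{\beta_\ell}$ (one per $\ell\in\partial\gamma_e$) and of the arc operator $T^{\beta_{\mathring{\gamma}_e}}$, Lemmas \ref{lemma:curveActions} and \ref{lemma:longcurveactions} describe all of their shifts: $\pm\delta_\ell$ for a loop, and sums $\sum_i\varepsilon_i\delta_{e_i}$ with $\varepsilon_i\in\{0,\pm 2\}$ for the arc, with nonzero coefficients on the extreme shifts. Since $\gamma_e$ is a \emph{directed} path in $\Phi_V$, the orientations of its edges fit into a consistent flow pattern, and because the lexicographic order on $\Z^k$ is the one induced by $\succ$, the coordinate of $f$ attached to the most upstream directed vertex met by $\gamma_e$ is dominant; one then checks that the hypotheses of Lemma \ref{lemma:uniqueminimalshift} (or Corollary \ref{cor:uniqueminimalshift}) hold for the relevant factors, so that $T^{\gamma_e}$ has a single minimal shift $x$, equal to the sum of the minimal shifts of those factors.

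Next I would compute $f(x)$ and $g(x)$. The identity $g(x)=-2\delta_e$ is immediate from the definition of the orthogonal map: $g$ kills every directed and every extraverted edge of $\Phi_V$, so the only edge of $\gamma_e$ surviving under $g$ is its terminal leaf $e$ (a non-loop leaf contributing $\delta_e$, a loop leaf contributing $2\delta_e$ against a half-size entry of $x$; either way one gets $-2\delta_e$). For $f(x)$ the point is a telescoping along $\gamma_e$ with respect to the flow orientation: at every interior vertex of the path exactly one incident edge of $\gamma_e$ is incoming and one outgoing, so their contributions to the equation $d(v,\cdot)$ cancel and $f(x)$ vanishes there; at the $\Phi_V$-endpoint of the leaf, the outgoing leaf and the incoming preceding edge cancel likewise; and what survives is concentrated at the upstream end of $\gamma_e$, producing $-4$ at the second vertex $v$ when the first edge is directed, and $-4$ at each endpoint $v,v'$ of the first edge when it is extraverted (an extraverted edge being incoming at both of its endpoints, its shift feeds into $d(v,\cdot)$ and $d(v',\cdot)$ with no cancelling outgoing edge). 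This is parallel to the endpoint computation in Lemma \ref{lemma:extopsecond}, with the endpoint behaviour now governed by the type of the first edge.

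Finally, the last bullet goes exactly as in Lemma \ref{lemma:extopfirst}: effectiveness of $y$ for $T^{\gamma_e}$ follows from Lemma \ref{lemma:longcurveactions} together with Corollary \ref{cor:effectiveness}, and $y-x\in\Delta$ is checked through the triangular inequalities at the vertices met by $\gamma_e$; since the non-negativity and parity conditions are automatic, the only inequalities that can fail for $y\in V$ do so on a finite union of \emph{proper} barely admissible subspaces of $V$, which by simplicity of $V$, Lemma \ref{lemma:unionsimpleNAS} and Lemma \ref{lemma:codimension} may be taken of codimension $1$; these form the exceptional set $W$. I expect the genuine work to sit in the second step — pinning down $x$ and the telescoping for $f(x)$ — since the distinction between a directed and an extraverted first edge, and the bookkeeping of the $\{0,\pm 2\}$ (and $\pm 1$ for loops) shift ranges against the $\succ$-lexicographic order, is exactly where the combinatorics of the flow subgraph must be used carefully.
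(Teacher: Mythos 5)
Your overall template (identify $x$, read off $f(x)$ and $g(x)$, then handle effectiveness and the triangular inequalities as in Lemma \ref{lemma:extopfirst}) is the paper's template, and your computation of $g(x)$ is correct. But there is a gap at the heart of the argument: you never actually determine the minimal shift $x$, and your ``telescoping'' computation of $f(x)$ is inconsistent with the answer you assert. The paper pins $x$ down by lexicographic minimization along $\succ$: writing the edges of $\gamma_e$ as $e_0,\dots,e_r$ and the internal vertices as $v_1\succ\dots\succ v_r$, one first minimizes $f(x)(v_1)$, which forces the sign $+2$ on $e_0$ and $-2$ on $e_1$, then minimizes $f(x)(v_2)$, forcing $-2$ on $e_2$, and so on, yielding $x=2\delta_{e_0}-2\sum_{i=1}^{r}\delta_{e_i}$. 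The positive entry on the first edge is the whole point of this lemma: it is exactly what makes $f(x)(v_1)=-4$ rather than $0$. Your argument, by contrast, says that at every interior vertex one incident edge of $\gamma_e$ enters $d(v,\cdot)$ with a $+$ and the other with a $-$ so the contributions cancel --- but the second vertex is an interior vertex of the path with exactly the same incoming/outgoing structure, so your own cancellation mechanism would give $f(x)(v_1)=0$ if the shift were uniformly $-2$. You assert the value $-4$ there without supplying the reason (namely $x(e_0)=+2$, $x(e_1)=-2$), so the key step is missing rather than proved.

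This omission propagates. First, the uniqueness of the minimal shift of the arc factor $T^{\beta_{\mathring{\gamma}_e}}$ is not a formal consequence of Lemma \ref{lemma:uniqueminimalshift}; it \emph{is} the iterated minimization above (one coordinate of $f$ at a time down the path), which you replace by the single remark that the most upstream coordinate is ``dominant.'' Second, your claim in the last bullet that ``the non-negativity and parity conditions are automatic'' is false here precisely because $x$ is not non-positive: $(y-x)(e_0)=y(e_0)-2$ can be negative, and the locus $\{y(e_0)\in\{0,1\}\}\cap V$ must be thrown into the exceptional set $W$. Relatedly, the effectiveness of $y$ for $T^{\gamma_e}$ cannot be obtained by blindly citing Corollary \ref{cor:effectiveness} as in Lemma \ref{lemma:extopfirst}; the paper explicitly notes that although the minimal shift of $T^{\beta_{\mathring{\gamma}_e}}$ has a positive coefficient, the corollary only requires the minimal shift of the \emph{second} factor $T^{\beta_{\partial\gamma_e}}$ to be non-positive, which is the case. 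So the skeleton of your proof is right, but the one computation that distinguishes this lemma from Lemmas \ref{lemma:extopfirst}--\ref{lemma:intopfirst} --- the sign pattern of the unique minimal shift and its consequences --- is not established.
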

\begin{proof}
	The fact that $T^{\gamma}$ has a unique minimal shift works exactly the same way as before. We assume for simplicity that $\gamma$ contains no loop edges; the other case is identical. By Lemma \ref{lemma:longcurveactions}, the shifts of $T^{\gamma}$ are $$\sum_{e \textrm{ edge of }\gamma}\pm 2\delta_{e}.$$
	
	Number the internal vertices of $\gamma$ as $v_1\succ v_2 \succ \dots \succ v_r$, and number the edges of $\gamma$ as $e_0,\dots,e_r$ in the same order. If the first edge of $\gamma$ is an extraverted edge, then $\gamma$ has as endpoint an additional $v_0$ in the interior of $\Phi_V$; in this case $v_0$ might be larger than $v_1$ but either way the proof proceeds the same way.
	
	Suppose first that either $v_1\succ v_0$ or $v_0$ does not exist. Then to find the minimal shift $x$ we need to first minimize $f(x)(v_1)$ which is achieved by choosing the $+$ sign on $e_0$ and the negative sign on $e_1$. Among these shifts, we then need to minimize $f(x)(v_2)$, which is achieved by choosing the $-$ sign on $e_3$, and so on until we find that the minimal shift must be $2\delta_{e_0}-2\sum_{i=1}^r\delta_{e_i}$.
	
	If instead $v_0\succ v_1$, we first must minimize $f(x)(v_0)$ which is achieved by choosing the $+$ sign on $e_0$ anyways, which means that the rest of the argument goes exactly the same and the conclusion is the same.
	
	The calculations of the values of $f$ and $g$ are straightforward.

	The proof for the last bullet point is exactly the same as the one for Lemma \ref{lemma:extopfirst}; notice that in this case, even though the minimal shift of $T^{\mathring{\gamma}}$ has some positive coefficients, the minimal shift of $T^{\partial\gamma}$ is non-positive, and that is all we need to apply Corollary \ref{cor:effectiveness}.
\end{proof}

The next lemma is simply a variation of Lemma \ref{lemma:intopsecond} and is proven in the exact same way.
\begin{lemma}\label{lemma:intopsecondintro}
	Let $e$ be an introverted edge of $\Phi_V$, $\gamma_e\subseteq \Phi_V$ be a maximal directed path ending in $e$, and define the path operator $T^{\gamma_e}$. Then 
	
	\begin{itemize}
		\item $T^{\gamma_e}$ has a unique minimal shift $x$;
		\item if the first edge of $\gamma_e$ is directed, $f(x)=-4\delta_v-2\delta_{w}$ where $v$ is the second vertex of $\gamma_e$ and $w$ is the endpoint of $\gamma_e$ on the introverted edge;
		\item if the first edge of $\gamma_e$ is extraverted, $f(x)=-4(\delta_v+\delta_{v'})-2\delta_w$, where $v,v'$ are the endpoints of the extraverted edge and $w$ is the endpoint of $\gamma_e$ on the introverted edge.
		\item $g(x)=-2\delta_e$;
		\item there is a union of barely admissible subspaces $W\subseteq V$, each of codimension $1$, such that for all $y\in V\setminus W$, $y$ is effective for $T^\gamma$ and $y-x\in \Delta$.
	\end{itemize}
\end{lemma}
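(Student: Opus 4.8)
The plan is to repeat the proof of Lemma~\ref{lemma:intopsecond} essentially verbatim, the only structural difference being that the edge $e$ in which the directed path $\gamma_e$ terminates is an introverted edge of $\Phi_V$ rather than a leaf; consequently \emph{both} endpoints of $e$ are vertices appearing among the defining data of $V$, and so each of them carries a coordinate of $f$. As in Lemma~\ref{lemma:intopsecond} we may assume $\gamma_e$ contains no loop edges, the loop case being identical.

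First I would check that $T^{\gamma_e}$ has a unique minimal shift: by Definition~\ref{dfn:pathop}, $T^{\gamma_e}=T^{\beta_{\mathring{\gamma_e}}}T^{\beta_{\partial\gamma_e}}$ is a composition of the elementary curve operators of Lemmas~\ref{lemma:curveActions} and \ref{lemma:longcurveactions}, and, just as in the proofs of Lemmas~\ref{lemma:intopfirst}--\ref{lemma:intopsecond}, repeated application of Lemma~\ref{lemma:uniqueminimalshift} yields a unique minimal shift $x$, with the shifts of $T^{\gamma_e}$ being exactly the $\sum_{e'\text{ edge of }\gamma_e}\pm 2\delta_{e'}$. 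Next I would identify $x$ by minimizing $f$ lexicographically along $\gamma_e$, from the $\succ$-largest internal vertex downwards, exactly as in Lemma~\ref{lemma:intopsecond}: at the upstream end this fixes the sign on the first edge of $\gamma_e$ and forces the contribution $-4\delta_v$ or $-4(\delta_v+\delta_{v'})$ according as that edge is directed or extraverted, while at each interior vertex of $\gamma_e$ the two incident path edges cancel and contribute $0$. The one new feature is at the downstream end: since $e$ is introverted, its far endpoint $w$ lies in $\Phi_V$ and carries a coordinate of $f$; once the signs along $\gamma_e$ have been pinned down, that coordinate reads off as $f(x)(w)=-2$, producing the summand $-2\delta_w$ which is absent from Lemma~\ref{lemma:intopsecond}, where the analogous endpoint is not a vertex of $\Phi_V$. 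The identity $g(x)=-2\delta_e$ then follows at once, since $g$ annihilates every directed and extraverted edge of $\Phi_V$, leaving only the introverted edge $e$ with coefficient $-2$.

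For the last bullet point I would argue exactly as in Lemma~\ref{lemma:extopfirst}: because $-x$ is non-negative, and even on each non-loop edge, the only way $y-x$ can fail to lie in $\Delta$ for $y\in V$ is by violating a triangular inequality at one of the finitely many vertices that are endpoints of $\gamma_e$ in $\Phi_V$, and imposing equality in such an inequality cuts out a proper, hence codimension-one, barely admissible subspace thanks to the simplicity of $V$; taking $W$ to be the union of these, for $y\in V\setminus W$ we get $y-x\in\Delta$, while $y-x\notin V$ is automatic, and effectiveness of $y$ for $T^{\gamma_e}$ follows from Lemma~\ref{lemma:longcurveactions} and Corollary~\ref{cor:effectiveness}, using that the minimal shift of $T^{\beta_{\partial\gamma_e}}$ is non-positive even though that of $T^{\beta_{\mathring{\gamma_e}}}$ need not be. There is no genuinely new difficulty here; the one point requiring care — and thus the main obstacle — is the bookkeeping at the terminal edge $e$, namely checking that the sign the minimization forces on $e$ at its other endpoint is compatible with, and indeed pins down, the claimed values $f(x)(w)=-2$ and $g(x)=-2\delta_e$.
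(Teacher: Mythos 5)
Your proposal is correct and matches the paper's approach exactly: the paper's own proof is a one-line deferral to the argument of Lemma \ref{lemma:intopsecond}, and you carry out precisely that adaptation, correctly isolating the only new feature (the terminal endpoint $w$ of the introverted edge now carries an $f$-coordinate, contributing the extra $-2\delta_w$, while $g$ picks up $-2\delta_e$ from the introverted final edge).
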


\section{Witten's finiteness conjecture}
\label{sec:finalproof}

\subsection{External and internal operators of $V$}

We now proceed to define a set of operators that will allow us to perform the inductive step in the proof.

\begin{definition}\label{def:extopfirst}
	Take a connected component of $\mathring{\Gamma}_V$, take a planar projection of it and choose one if its leaves $e$ as a root. Notice that if this connected component contains $2n+1$ edges, then it contains $n+2$ leaves. Take $\gamma_1,\dots,\gamma_{n+1}$ maximal arcs each connecting $e$ to one of the other leaves, and also take $\gamma_{n+2},\dots,\gamma_{2n+1}$ maximal arcs connecting two adjacent leaves (different from $e$) in the chosen projection. Append to each $\gamma_i$ any edge loop in $\Gamma_V$ that it touches. Repeat this process for each connected component of $\Gamma_V$ to obtain $\gamma_1,\dots,\gamma_l$ where $l$ is the number of edges of $\mathring{\Gamma}_V$, and take $T^{e_{l+1}},\dots,T^{e_{r}}$ where $e_k$ is an edge loop of $\Gamma_V$. We call $T^{\gamma_1},\dots,T^{\gamma_l},T^{e_{l+1}},\dots,T^{e_r}$ the \emph{external operators of the first kind} for $V$.
\end{definition}
\begin{lemma}\label{lemma:exttopfirstindep}
 The external operators of the first kind have unique minimal shifts $x_1,\dots,x_r$ and $f(x_1),\dots,f(x_r)$ are linearly independent.
\end{lemma}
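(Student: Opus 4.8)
The plan is to reduce the independence statement to a purely combinatorial fact about paths in a plane trivalent tree. The claim about unique minimal shifts is immediate: each $T^{\gamma_i}$ is the path operator of a maximal arc of $\Gamma_V$ (with the edge loops it meets appended), so Lemma~\ref{lemma:extopfirst} applies directly and produces a unique minimal shift $x_i$ with $f(x_i)(e)=-2$ for $e\in\mathring{\gamma}_i$, $f(x_i)(e)=-1$ for the edge loops of $\gamma_i$, and $f(x_i)=0$ on all other coordinates; and each remaining operator $T^{e_k}$, $l<k\le r$, is up to an irrelevant square the operator $T^{\beta_{e_k}}$ of Lemma~\ref{lemma:minshiftdashed}, whose unique minimal shift is a nonzero multiple of $-\delta_{e_k}$.

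For the independence of $f(x_1),\dots,f(x_r)$ I would keep only the $r$ coordinates of $f$ attached to the edge-type defining equations of $V$, namely to the edges of $\Gamma_V$ (there are exactly $r$ of them: $l$ non-loop edges and $r-l$ edge loops). Ordering the rows of the resulting $r\times r$ matrix $\bigl(f(x_i)(e)\bigr)$ so that the $\gamma_i$ come before the $T^{e_k}$, and the columns so that the non-loop edges come before the edge loops, it takes block triangular form $\bigl(\begin{smallmatrix}A&B\\0&D\end{smallmatrix}\bigr)$: the lower-left block vanishes because $T^{\beta_{e_k}}$ shifts only the coordinate $e_k$ (an edge loop), $D$ is diagonal with nonzero entries since $e_{l+1},\dots,e_r$ are distinct, and $A$ records, for each non-loop edge $e$ of $\Gamma_V$ and each arc $\gamma_i$, whether $e\in\mathring{\gamma}_i$ (the appended edge loops contribute only to $B$). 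Hence the determinant is $\pm(\det A)(\det D)$ and it suffices to show $\det A\neq 0$; since $A$ is block diagonal over the connected components of $\mathring{\Gamma}_V$, this splits into one statement per component.

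The combinatorial statement, for a single component $T$ — which is a plane trivalent tree with $2n+1$ edges and leaves $\ell_0,\dots,\ell_{n+1}$ listed cyclically, $\ell_0$ being the chosen root — reads: the indicator vectors in $\Q^{E(T)}$ of the $n+1$ paths $P_i$ from $\ell_0$ to $\ell_i$ and of the $n$ paths $Q_j$ from $\ell_j$ to $\ell_{j+1}$ are linearly independent, hence a basis since $|E(T)|=2n+1$. I would prove this by induction on $n$, the cases $n\le1$ being direct ($n=1$ gives $T=K_{1,3}$ and a $3\times3$ matrix of determinant $\pm2$). For $n\ge2$, the tree obtained from $T$ by deleting all leaves has $n\ge2$ vertices and hence at least two leaves, which are exactly the cherry-vertices of $T$; thus $T$ has at least two cherries. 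Since $\ell_0$ lies in at most one cherry (the one whose vertex is the neighbour of $\ell_0$, if any), some cherry consists of leaves $\ell_i,\ell_{i+1}$ with $1\le i\le n$ (the two leaves of a cherry being consecutive in the cyclic order). Contract it: delete $\ell_i,\ell_{i+1}$ and their common vertex $v$, promote the third edge $g$ at $v$ to a leaf edge occupying the planar slot of the old cherry, and obtain a plane trivalent tree $T'$ with $2n-1$ edges, to which the inductive hypothesis applies. Writing $f=v\ell_i$, $f'=v\ell_{i+1}$ and letting $P'_\bullet,Q'_\bullet$ be the distinguished paths of $T'$, one checks that among the distinguished paths of $T$ only $P_i,P_{i+1},Q_{i-1},Q_i,Q_{i+1}$ differ from the corresponding path of $T'$, and that $P_i=P'_i\cup\{f\}$, $P_{i+1}=P'_i\cup\{f'\}$, $Q_{i-1}=Q'_{i-1}\cup\{f\}$, $Q_{i+1}=Q'_i\cup\{f'\}$ and $Q_i=\{f,f'\}$. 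Projecting a hypothetical dependence to $\Q^{E(T')}$ and invoking independence there kills every coefficient except those of $P_i,P_{i+1},Q_i$ and forces the $P_i$- and $P_{i+1}$-coefficients to be opposite; the two equations read off the $f$- and $f'$-coordinates then force all remaining coefficients to vanish, completing the induction.

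The main obstacle is this combinatorial claim, and within it the bookkeeping of how the distinguished paths transform under cherry contraction; everything else is a routine application of the results of Sections~\ref{sec:curvop} and~\ref{sec:polytope-boundary}.
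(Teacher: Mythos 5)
Your proof is correct and follows essentially the same route as the paper: both arguments locate a cherry of $\mathring{\Gamma}_V$ away from the root and induct using the three distinguished arcs meeting it, the paper by exhibiting $\delta_{e_1},\delta_{e_2}$ in the span of the minimal shifts and deleting the two cherry edges, you by contracting the cherry and projecting a hypothetical dependence relation to the smaller tree. Your preliminary reduction (projecting $f$ onto the edge coordinates of $\Gamma_V$ and splitting off the loop-edge block to get a block-triangular matrix) is a slightly tidier packaging of the paper's opening step of stripping out the loop-edge shifts, and your explicit verification that a cherry avoiding the root exists is a detail the paper leaves implicit.
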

\begin{proof}
	The uniqueness of the minimal shifts is from Lemma \ref{lemma:extopfirst}; we only need to show that $f(x_1),\dots,f(x_r)$ are linearly independent. To do this, we show by induction on $r$ that the span over $\Q$ of $f(x_1),\dots,f(x_r)$ is equal to the span of $\delta_{e_1},\dots,\delta_{e_r}$, where $e_1,\dots,e_r$ are the edges of $\Gamma_V$. If there are edge loops in $\Gamma_V$, then some of the $f(x_i)$'s are equal to $\delta_{e_i}$; we can simply remove them from any $f(x_j)$ where they appear without affecting the validity of the statement. Therefore we can assume that there are no edge loops. We now proceed by induction on $r$.
	
	\begin{figure}
		\includegraphics[scale=0.4]{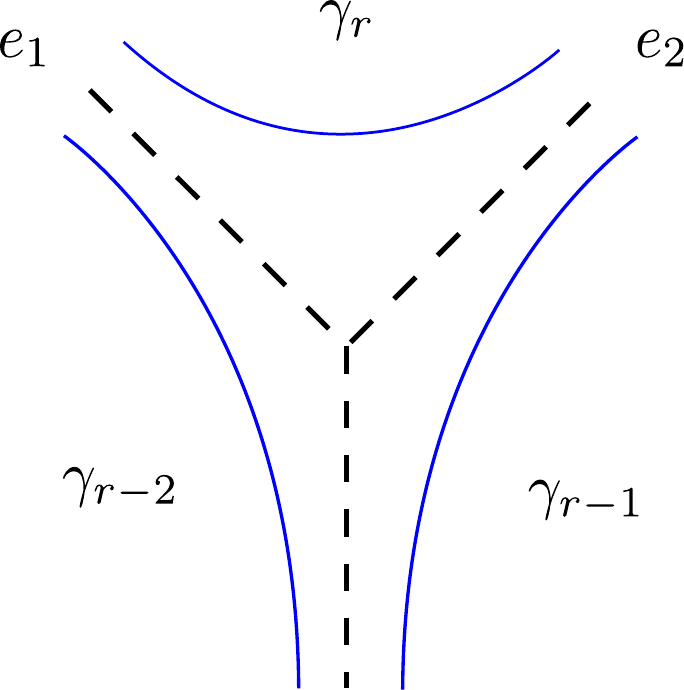}
		\label{fig:extoperators}
		\caption{The three paths touching $e_1$ and $e_2$.}
	\end{figure}
	
	If $r=1$ there is nothing to prove. Suppose $r>1$ (which also means $r>2$ by the simplicity assumptions) and take $e_1,e_2$ (both different from the root $e$) two leaves of $\Gamma_V$ sharing a vertex. There are exactly three $\gamma_i$'s touching either $e_1$ or $e_2$, depicted in Figure \ref{fig:extoperators}; the two arcs, say (without loss of generality) $\gamma_{r-2},\gamma_{r-1}$, connecting $e_1,e_2$ respectively to $e$ and the arc connecting the two of them (which is simply $\gamma_r:=e_1\cup e_2$). Then we can obtain $4\delta_{e_1}:= x_{r-2}-x_{r-1}+x_r$ and $4\delta_{e_2}:= x_{r-1}-x_{r-2}+x_r$. This means that the span of $f(x_1),\dots,f(x_r)$ is equal to the span of $f(x_1),\dots,f(x_{r-3}),f(x_{r-2})-2\delta_{e_1},\delta_{e_1},\delta_{e_2}$. The first $r-2$ vectors of this list are the minimal shifts of the external operators associated to the singular graph obtained by removing $e_1,e_2$ from $\Gamma_V$, thus we can apply the inductive hypothesis and obtain the lemma.
\end{proof}

\begin{definition}\label{def:extopsecond}
Fix a planar embedding of $\Phi_V$. 
For any vertex $v$ of $\Phi_V$, we define a path $\gamma_v$ recursively as follows.
\begin{itemize}
	\item 
	If both outgoing edges of $v$ are leaves, define $\gamma_v$ as the leftmost edge of $v$;
	\item 
	If both outgoing edges of $v$ are introverted edges, define $\gamma_v$ as the larger outgoing edge;
	\item if one outgoing edge of $v$ is a leaf and the other is introverted, define $\gamma_v$ as the leaf;
	\item if at least one outgoing edge of $v$ is directed, choose $w$ the other endpoint of the leftmost outgoing edge and define $\gamma_v:=\gamma_w\cup \overrightarrow{vw}$.
\end{itemize}

\begin{figure}
	\includegraphics[width=0.5\textwidth]{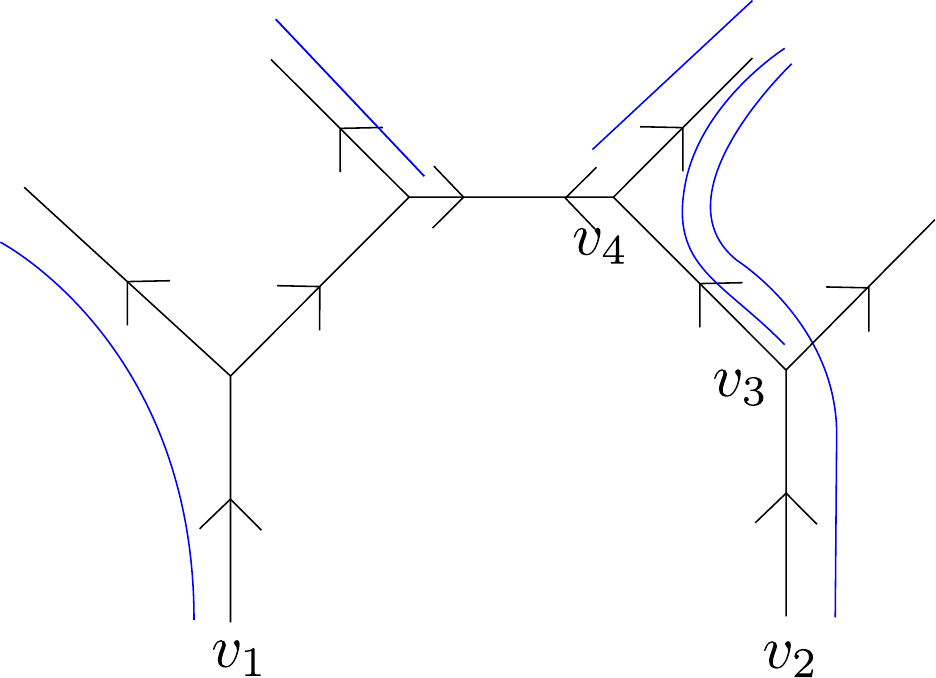}
	\label{fig:externalpaths}
	\caption{The paths $\gamma_{v_i}$ in Definition \ref{def:extopsecond}; notice that they always turn left except to avoid introverted edges.}
\end{figure}

Some examples are depicted in Figure \ref{fig:externalpaths}
 
 Let $v_1\succ \dots\succ v_n$ be the vertices of $\Phi_V$;  and define $T^{v_i}=T^{\gamma_{v_i}}$ as in Lemma \ref{lemma:extopsecond}. We call $T^{v_1},\dots,T^{v_n}$ the external operators of the second kind for $V$.
\end{definition}
\begin{lemma}\label{lemma:exttopsecondindep}
	The external operators of the second kind have unique minimal shifts $x_1,\dots,x_n$ and $f(x_1),\dots,f(x_n)$ are linearly independent.
\end{lemma}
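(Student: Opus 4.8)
The plan is to argue in the same spirit as Lemma~\ref{lemma:exttopfirstindep}. The uniqueness of the minimal shifts $x_1,\dots,x_n$ is given directly by Lemma~\ref{lemma:extopsecond}: each external operator of the second kind is the path operator $T^{\gamma_{v_i}}$ attached to a maximal directed path issuing from the directed vertex $v_i$, and Lemma~\ref{lemma:extopsecond} asserts precisely that such a path operator has a unique minimal shift. Hence the entire content of the statement is the linear independence of $f(x_1),\dots,f(x_n)$.

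For this, I would start from the explicit description of $f(x_i)$ in Lemma~\ref{lemma:extopsecond}: every edge-coordinate of $f(x_i)$ vanishes, and on the vertex-coordinates $f(x_i)$ equals $-2\,\delta_{v_i}$ when $\gamma_{v_i}$ terminates on a leaf of $\Phi_V$, and $-2(\delta_{v_i}+\delta_{w_i})$ when it terminates on an introverted edge, $w_i$ being the other endpoint of that edge; since an introverted edge has both half-edges signed, $w_i$ is again one of the directed vertices defining $V$. So, up to the overall scalar $-2$, each $f(x_i)$ is the basis vector $\delta_{v_i}$ together with at most one further basis vector $\delta_{w_i}$.

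The argument I would then run is a triangularity argument. I would linearly order the directed vertices of $\Phi_V$ so that whenever $\gamma_{v_i}$ terminates on an introverted edge, its endpoint $w_i$ comes \emph{after} $v_i$. Such an order is obtained by refining the partial order coming from the directed flow (which already linearizes the ``directed'' portion of each $\gamma_{v_i}$) by means of the auxiliary order $\succ'$ on maximal chains of introverted edges, which is exactly what fixes, inside each such chain, the direction travelled by the paths $\gamma_{v_i}$. With the directed vertices written in this order and $f(x_1),\dots,f(x_n)$ expressed in the associated basis $(\delta_{v_1},\dots,\delta_{v_n})$ of the vertex part of $\Z^k$, the resulting $n\times n$ matrix is triangular with all diagonal entries equal to $-2$, hence invertible. (Equivalently, one can peel off the defining vertices one at a time, each $\delta_{v_i}$ being recovered from $f(x_i)$ together with the previously handled vertices, exactly as in the induction of Lemma~\ref{lemma:exttopfirstindep}.)

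The delicate point — and the step I expect to be the real obstacle — is the construction of that order: the flow orientation by itself leaves the two endpoints of an introverted edge incomparable, so one must invoke $\succ'$ to make every path $\gamma_{v_i}$ run consistently ``downstream'' and then verify that this does not produce a directed cycle among the pairs $\{v_i,w_i\}$ (equivalently, that no $f(x_i)$ ends up with its non-diagonal term above the diagonal). Once this bookkeeping is settled, the triangular structure of the coordinate matrix — and hence the lemma — is immediate.
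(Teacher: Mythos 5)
Your strategy is the paper's: both arguments reduce the lemma to the observation (from Lemma \ref{lemma:extopsecond}) that $f(x_i)$ vanishes on all edge--coordinates and equals $-2$ exactly on the endpoints of $\gamma_{v_i}$ in $\Phi_V$, and then run a peeling/triangularity argument on the vertex coordinates. The uniqueness of the minimal shifts and the description of $f(x_i)$ are correctly quoted. The problem is that you defer precisely the step carrying all the content of the lemma: the existence of a linear order in which the terminal vertex $w_i$ of $\gamma_{v_i}$ always comes after $v_i$, i.e.\ the acyclicity of $v\mapsto T(v)$, where $T(v)$ is the far endpoint of the final introverted edge of $\gamma_v$ when there is one. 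You name this as ``the real obstacle'' but do not resolve it, so as written this is not yet a proof. Note also that the relation to linearize is not just the pairwise constraint ``$v_i$ before $w_i$'': since $\gamma_w$ may traverse many directed edges before its final introverted edge, a single vertex $v$ can be the terminus of $\gamma_w$ for \emph{many} $w$ (namely $w=u$ and every vertex directed-upstream of $u$, where $u$ is the chain-internal vertex with $\gamma_u$ equal to that final edge), and every such column contributes a $-2$ to row $v$; your order must place all of them before $v$.

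To close the gap, here is why no cycle can occur (this is exactly what $\succ'$ and the ``turn left except to avoid introverted edges'' definition of $\gamma_v$ are engineered for). A cycle $v\mapsto T(v)\mapsto T^2(v)\mapsto\dots\mapsto v$ would concatenate $\gamma_v,\gamma_{T(v)},\dots$ into a closed walk in $\Phi_V$. This walk never immediately backtracks: within each $\gamma_w$ every intermediate vertex is entered through its unique incoming half-edge and left through an outgoing one; and at each junction the walk arrives at $T(w)$ along an introverted edge $e$ which is never the first edge of $\gamma_{T(w)}$ --- if $T(w)$ is an internal vertex of a maximal chain of introverted edges, then $\gamma_{T(w)}$ is the $\succ'$-larger introverted edge at $T(w)$ while $e$ is the smaller one, and otherwise $\gamma_{T(w)}$ begins with a leaf or a directed edge. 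Since $\Phi_V$ contains no cycles, a non-trivial non-backtracking closed walk cannot exist, so the relation is acyclic and your triangular-matrix argument goes through. The paper's own proof implements the same idea in peeled form: each maximal chain of introverted edges has, by maximality and the choice of $\succ'$, a first internal vertex that is the terminus of no $\gamma_w$, and one propagates forward from there.
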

\begin{proof}
	Given $v_i$, $T^{v_i}$ has a unique minimal shift $x_i$ given by Lemma \ref{lemma:extopsecond}. Consider a linear relation $X=\sum_i \lambda_i x_i=0$. Suppose at first that $e_0,\dots,e_r$ are introverted edges of $\Phi_V$ that form a connected subset $\Lambda$ of $\Phi_V$, and call $v_1,\dots,v_r$ the internal vertices of $\Lambda$ (with the convention that $e_i\cap e_{i+1}=v_{i+1}$), and that at $v_i$, the smaller edge is $e_{i-1}$ and the bigger  is $e_i$. Then $\gamma_{v_i}$, as defined in Definition \ref{def:extopsecond}, is given by $e_{i}$. By Lemma \ref{lemma:extopsecond}, among the minimal shifts of external operators of the second kind, there is exactly one $x_i$ such that $f(x_i)(v_1)\neq 0$, namely the one corresponding to $T^{v_1}$ (since any other $\gamma_v$ would not contain $v_1$ as endpoint). Therefore $0=f(X)(v_1)=-2\lambda_i$ and thus $x_i$ appears with trivial coefficient in $X$. We can repeat the same reasoning in order for each $v_i$ to find that the corresponding minimal shift must not appear in $X$. Repeat this process for any other choice of introverted edges. Finally, take any $v$ in $\Phi_V$ that has at least one non introverted outgoing edge; then as before the only minimal shift such that $f(x_i)(v)\neq 0$ is the one corresponding to $T^{v}$ and thus following the same reasoning it must not appear in $X$.
\end{proof}
\begin{corollary}\label{cor:extoperators}
	Let $b_1,\dots,b_k$ be the external operators of $V$ (of both kinds); let $x_1,\dots,x_k$ be their unique minimal shift. Then $f(x_1),\dots,f(x_k)$ are linearly independent.
\end{corollary}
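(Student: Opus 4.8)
The plan is to bootstrap the two independence statements already proved for the two kinds of external operators (Lemma \ref{lemma:exttopfirstindep} for the first kind, Lemma \ref{lemma:exttopsecondindep} for the second) by producing a single linear projection of $\Z^k$ that completely separates their minimal shifts. Recall that the coordinates of the map $f$ are indexed by the defining data $x_1,\dots,x_k$ of $V$: exactly $r$ of these data are edges of $\Gamma$ — namely the edges of $\Gamma_V$, which also index the $r$ external operators of the first kind — while the remaining $k-r$ are directed vertices, one for each external operator of the second kind. I would let $\pi\colon\Z^k\ra\Z^r$ be the projection onto those ``edge coordinates''.

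The first key observation is that $\pi$ annihilates the minimal shifts of the second-kind operators: by Lemma \ref{lemma:extopsecond} the unique minimal shift $x$ of each $T^{v}$ satisfies $f(x)(e)=0$ for every edge $e$, so $\pi(f(x))=0$. The second key observation is the dual fact, that $\pi$ sends the minimal shifts of the $r$ first-kind operators to a spanning family of $\Q^r$. This is essentially the content of the induction in the proof of Lemma \ref{lemma:exttopfirstindep}: the identities of the form ``$4\delta_{e_1}=x_{r-2}-x_{r-1}+x_r$'' used there are valid once one discards the vertex coordinates, i.e.\ once one applies $\pi$, because by Lemma \ref{lemma:extopfirst} the edge part of the minimal shift of a maximal arc operator $T^{\gamma}$ is simply $-\sum_{e\in\partial\gamma}\delta_e-2\sum_{e\in\mathring{\gamma}}\delta_e$, and the ``peel off a pair of adjacent leaves'' step then cancels exactly as there. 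Since there are precisely $r$ of these projected shifts and they span $\Q^r$, they form a basis, hence are linearly independent.

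Granting these two observations the conclusion is routine. Starting from a relation $\sum_{i}\lambda_i f(x_i)=0$ over all external minimal shifts, applying $\pi$ kills the second-kind terms and leaves a relation among the $\pi(f(x_i))$ coming from the first-kind operators; by the second observation all the corresponding $\lambda_i$ vanish. What remains is a relation among the second-kind $f$-shifts only, and Lemma \ref{lemma:exttopsecondindep} forces the rest of the coefficients to vanish.

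The only point that requires some care is the second observation: the proof of Lemma \ref{lemma:exttopfirstindep} is phrased so as to conclude linear independence of the first-kind shifts in $\Z^k$, whereas here I need the stronger statement that their images under $\pi$ already span $\Q^r$ (the unprojected shifts have nonzero entries in the vertex coordinates corresponding to the endpoints of the arcs, so the span equality in that proof is really taking place in the edge coordinates). I would handle this by recording that refinement explicitly — either by restating Lemma \ref{lemma:exttopfirstindep} so that it asserts $\pi(f(x_1)),\dots,\pi(f(x_r))$ is a basis of $\Q^r$, or by re-running its short leaf-pair induction directly on the edge parts supplied by Lemma \ref{lemma:extopfirst}. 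Everything else is bookkeeping with the indexing of $f$.
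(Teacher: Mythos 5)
Your argument is correct and is essentially the paper's own proof: project onto the singular-edge coordinates, where the second-kind minimal shifts vanish and the first-kind ones remain independent, then finish off the second-kind coefficients with Lemma \ref{lemma:exttopsecondindep}. Your closing caveat --- that one needs independence of the \emph{projected} first-kind shifts, which is what the leaf-pair induction in the proof of Lemma \ref{lemma:exttopfirstindep} actually delivers rather than its bare statement --- is precisely the refinement the paper uses implicitly when it cites that lemma.
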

\begin{proof}
	Suppose the operators up to $b_n$ are of the first kind; then for any singular edge $e$, $f(x_i)(e)=0$ for all $i>n$. This implies that any linear dependence relation between the $f(x_i)$'s can only involve $i>n$ (since by Lemma \ref{lemma:exttopfirstindep} the first $n$ are linearly independent). However the $f(x_i)$'s for $i>n$ are linearly by Lemma \ref{lemma:exttopsecondindep}.
\end{proof}

\begin{definition}\label{def:intoperators}
	Consider $\gamma_1,\dots,\gamma_r$ all the maximal directed paths in $\Phi_V$; notice that they either end at a leaf of $\Phi_V$ or at an introverted edge. We will use these to define another set of operators but we first need to remove some of them. 
	
	Let $e_1,\dots,e_{n'}$ be the leaves of $\Phi_V$, and for each of them call $\gamma_{e_i}$ the maximal directed path ending in $e_i$ as in Lemma \ref{lemma:intopsecond}. 
	
	To these, we wish to add one maximal path ending at each introverted edge, but we need to make a choice. For each introverted edge $e$, there are exactly two maximal directed paths ending at $e$, each containing one of the endpoints $v,w$ of $e$ in their interior; we wish to remove one. Suppose that $\gamma_v$ contains $e$; then we remove the path that contains $v$ as an endpoint. If instead $\gamma_w$ contains $e$ we similarly remove the path that contains $w$ as an endpoint (notice that $\gamma_v$ and $\gamma_w$ cannot both contain $e$). If neither contains $e$, we simply remove one maximal path at random (see Figure \ref{fig:introoperator}). 
	
	Furthermore, for each extraverted edge $e$ of $\Phi_V$, we wish to remove one of these paths. To do this, choose $v$ the smaller endpoint of $e$, and call $\gamma_v$ the path defined in Lemma \ref{lemma:extopsecond}; one of the $\gamma_{e_i}$'s is going to equal $\gamma_v$ plus $e$, and we remove it from our collection (see Figure \ref{fig:extraoperator}) (notice that by construction this path was not removed by the previous step). Repeat the process for every extraverted edge.
	
	Renumber the remaining paths as $\gamma_{e_1},\dots,\gamma_{e_n}$, and define $d_{k+i}=T^{\gamma_{e_i}}$ as in Lemma \ref{lemma:intopsecond}.
	
	\begin{figure}
		\includegraphics[scale=0.4]{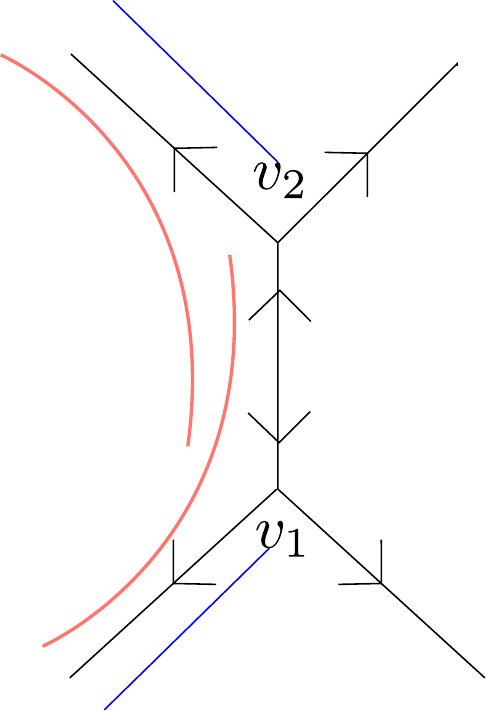}
		\label{fig:extraoperator}
		\caption{If $v_1\succ v_2$, the definition removes the top orange path from the collection.}
	\end{figure}
	
	\begin{figure}
		\includegraphics[scale=0.7]{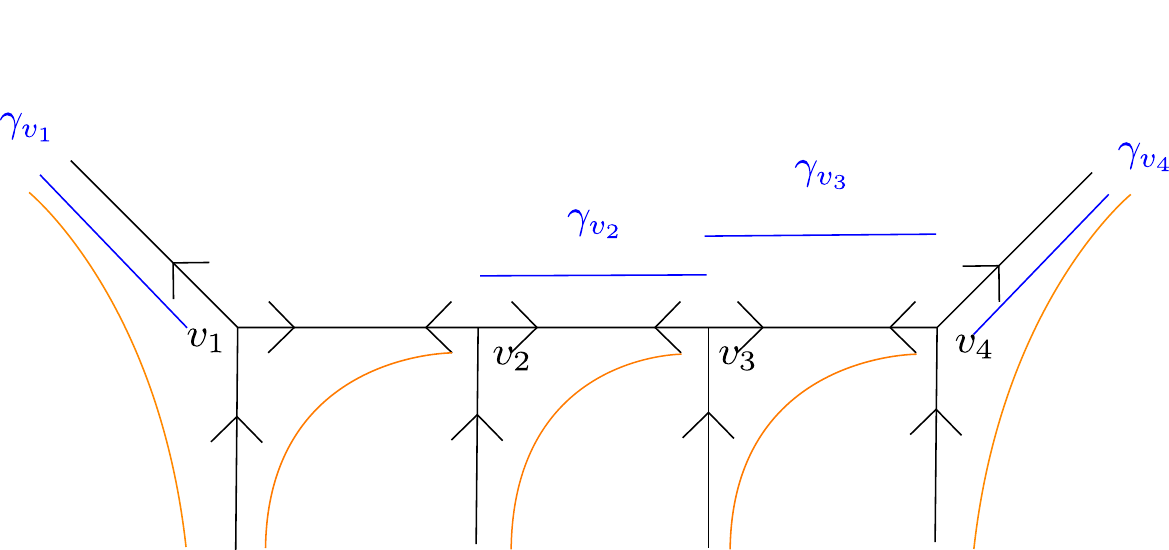}
		\label{fig:introoperator}
		\caption{An example of set of paths defining the operators. The blue straight paths define the external operators, while the orange curved paths define the internal operators.}
	\end{figure}
	
	Finally, let $e_1,\dots,e_m$ be the regular edges of $\Gamma$. Define $d_{k+n+i}:=T^{e_i}$ as in Lemma \ref{lemma:intopfirst}, so that we get operators $d_{k+1},\dots,d_{k+n+m}$, the first $n$ each corresponding to a maximal directed paths in $\Phi_V$, and the last $m$ corresponding to regular edges. Call these operators the \emph{internal operators} of $V$.
\end{definition}

\begin{lemma}\label{lemma:basisV}
	Let $d_{k+1},\dots,d_{k+n+m}$ be the internal operators of $V$, and choose a non-zero minimal shift for each of them; call them $y_{k+1},\dots,y_{k+n+m}$. Then $g(y_{k+1}),\dots,g(y_{k+n+m})$ are a basis for $g(V)$ over $\Q$ (hence $k+n+m=3g-3$).
\end{lemma}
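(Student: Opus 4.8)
The plan is to show that each $g(y_j)$ is a non-zero multiple of a single coordinate vector $\delta_{e_j}$ of $\Q^{\mathcal{E}'}$, that the edges $e_j$ are pairwise distinct, and finally that the subspace they span is exactly $g(\tilde V)$, where $\tilde V\subseteq\Q^{\mathcal{E}}$ denotes the linear subspace underlying $V$.

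First I would read off $g$ of the minimal shifts from the lemmas of the previous section. For an internal operator $d_j=T^{e}$ attached to a regular edge $e$ (Lemma \ref{lemma:intopfirst}) the non-zero minimal shifts are $\pm 2\delta_e$, so $g(y_j)=\pm 2\delta_e$; for an internal operator $d_j=T^{\gamma_e}$ attached to a maximal directed path ending at a leaf (Lemma \ref{lemma:intopsecond}) or at an introverted edge (Lemma \ref{lemma:intopsecondintro}) $e$ of $\Phi_V$ one has $g(y_j)=-2\delta_e$. In every case $g(y_j)=\varepsilon_j\,2\delta_{e_j}$ with $\varepsilon_j\in\{\pm 1\}$ and $e_j\in\mathcal{E}'$. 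Next I would check that the $e_j$ are pairwise distinct: the $m$ regular edges are distinct and disjoint from $\Phi_V$, and by the selection made in Definition \ref{def:intoperators} the retained paths end at pairwise distinct edges — exactly one at each introverted edge of $\Phi_V$, and one at each leaf of $\Phi_V$ except for the $p:=\#\{\text{extraverted edges of }\Phi_V\}$ leaves whose leaf-paths are discarded (the discards being distinct by the remark in that definition). Since $n=\#\{\text{leaves}\}+\#\{\text{introverted edges}\}-p$, this shows $\{e_{k+1},\dots,e_{k+n+m}\}$ consists of $n+m=|\mathcal{E}'|-p$ distinct elements of $\mathcal{E}'$; hence $g(y_{k+1}),\dots,g(y_{k+n+m})$ are linearly independent over $\Q$ and span the coordinate subspace $U:=\bigoplus_j\Q\,\delta_{e_j}\subseteq\Q^{\mathcal{E}'}$, which has dimension $n+m$.

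It then remains to prove $g(\tilde V)=U$, which I would do by double inclusion. Let $\phi_1,\dots,\phi_k$ be the linear forms cutting out $\tilde V$, so $\phi_i$ is the coordinate form $d\mapsto d(e_i)$ if $x_i=e_i$ is an edge, and $d\mapsto d(e_1)+d(e_2)-d(e_i)$ if $x_i=(v_i,e_i)$. For $g(\tilde V)\subseteq U$, note that $U^{\perp}\subseteq\Q^{\mathcal{E}'}$ is spanned by the coordinate forms dual to the discarded leaves, so it suffices to show that for each discarded leaf $\lambda$ the coordinate $d(\lambda)$ is constant on $V$, i.e. that the form $d\mapsto d(\lambda)$ lies in the span of $\phi_1,\dots,\phi_k$. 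Such a $\lambda$ is the far endpoint of a path $\gamma_v\cup e$ with $e$ an extraverted edge of $\Phi_V$ and $v$ its smaller endpoint; every interior vertex of the directed path $\gamma_v$ from $v$ to $\lambda$ is a vertex of $\Phi_V$ and hence carries a defining equation of $V$, and telescoping these equations along $\gamma_v$ — together with the equations at $v$ and at the other endpoint of $e$, whose coherent $+$-orientations along $e$ make the coefficient of $d(e)$ cancel — expresses $d\mapsto d(\lambda)$ as an integral combination of the $\phi_i$. For the reverse inclusion I would bound $\dim g(\tilde V)\ge n+m$: each regular edge occurs in no defining equation, so the basis vector $\delta_e$ lies in $\tilde V$ and $g(\delta_e)=\delta_e\in g(\tilde V)$ for all $m$ regular edges; and for each introverted edge and each retained leaf $e_j$ a parallel telescoping, this time \emph{propagating} a chosen value along the corresponding directed path in $\Phi_V$ and solving the defining equations one vertex at a time, produces a vector of $\tilde V$ whose $g$-image is a non-zero multiple of $\delta_{e_j}$. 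Combined with $g(\tilde V)\subseteq U$ and $\dim U=n+m$, this yields $g(\tilde V)=U$, so $g(y_{k+1}),\dots,g(y_{k+n+m})$ form a basis of $g(V)$. Finally, the same peeling argument, applied from the leaves of $\Phi_V$ inward, shows $\ker g\cap\tilde V=0$ (a vector of $\tilde V$ supported only on the singular, directed, extraverted and root edges must vanish), whence $n+m=\dim g(\tilde V)=\dim\tilde V=3g-3-k$, i.e. $k+n+m=3g-3$.

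The main obstacle will be the combinatorial core of the third step: organizing the telescoping and the propagation along $\Phi_V$ so that the defining equations interlock correctly, and in particular verifying that the extraverted edges (and roots) contribute exactly the number of redundancies matching the $p$ leaf-paths removed in Definition \ref{def:intoperators}, neither too few nor too many. This will require keeping careful track of the bidirected structure of $\Phi_V$ — which half-edge is distinguished at each directed vertex, why $\Phi_V$ is acyclic, and why every loop edge of $\Phi_V$ is a leaf — so that the telescoped equations genuinely eliminate the intended coordinates.
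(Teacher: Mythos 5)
Your first step (each $g(y_j)$ is a nonzero multiple of $\delta_{e_j}$ for pairwise distinct $e_j\in\mathcal{E}'$, hence linear independence) matches the paper, which treats this part as immediate from Lemmas \ref{lemma:intopfirst}, \ref{lemma:intopsecond} and \ref{lemma:intopsecondintro}. The problem is your second step. The claim that the coordinate $d(\lambda)$ of a discarded leaf $\lambda$ is constant on $V$ --- equivalently, that the form $d\mapsto d(\lambda)$ lies in the span of the defining forms $\phi_1,\dots,\phi_k$ --- fails as soon as an extraverted edge is present. Telescoping the vertex equations along $\gamma_v\cup e$ does cancel $d(e)$, but each vertex equation also carries the coordinate of the \emph{other} outgoing edge at that vertex, and those side-branch coordinates do not cancel. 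Concretely, let $\Phi_V$ consist of a single extraverted edge $e=vw$ with outgoing leaves $f_1,f_2$ at $v$ and $h_1,h_2$ at $w$ (take $\Gamma$ the genus $4$ lollipop tree with loop edges $l_1,\dots,l_4$ attached to the four leaves, and $V$ cut out by $d(v,e)=a_v$, $d(w,e)=a_w$; this is simple). Say the discarded leaf is $f_1$. On $V$ one has $d(f_1)=a_v-a_w-d(f_2)+d(h_1)+d(h_2)$, which is not constant; the direction $\tilde V$ maps under $g$ onto the hyperplane $\{b_{f_1}+b_{f_2}=b_{h_1}+b_{h_2}\}$ of $\Q^{\mathcal{E}'}$, which is \emph{not} the coordinate hyperplane $\{b_{f_1}=0\}$ spanned by the $g(y_j)$; indeed $\delta_{h_1}\notin g(\tilde V)$, so even the reverse inclusion $U\subseteq g(\tilde V)$ breaks. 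Your double-inclusion scheme therefore cannot close, and this is not a bookkeeping issue: the intermediate statement you aim for is simply not true.

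The example also shows that the lemma has to be read with some care, and the paper's own proof accordingly establishes a different (and, for the later applications, the relevant) statement: the map $c\mapsto(c(e_{k+1}),\dots,c(e_{k+n+m}))$ is a bijection from the affine span $W$ of $V$ onto $\Q^{n+m}$. This is proved constructively --- prescribe arbitrary values on the regular edges and on the retained leaves and introverted edges, propagate colors inward along $\Phi_V$ using the vertex equations, determine each extraverted edge from its retained side, then solve back outward for the single discarded edge. That bijectivity gives $n+m=\dim V$, hence $k+n+m=3g-3$, and it is the parametrization of $V$ by the $e_j$-coordinates that is actually used in the inductive step. If you want to keep your structure, you should abandon the identification of $g(\tilde V)$ with the coordinate subspace $U$ and instead prove that the projection of $W$ onto the $e_j$-coordinates is bijective, which is exactly your step-3 propagation argument run in both directions.
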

\begin{proof}
	The fact that they are linearly independent is a direct and obvious consequence of Lemmas \ref{lemma:intopfirst}, \ref{lemma:intopsecond} and \ref{lemma:intopsecondintro}.
	
	We need to prove that $g(W)$ is spanned over $\Q$ by $g(y_{k+1}),\dots,g(y_{k+n+m})$, where $W$ is the smallest subspace of $\Z^\mathcal{E}$ that contains $V$. To see this, we just need to show that given $a_{k+1},\dots,a_{k+n+m}$ arbitrarily, there exists a $c\in W$ such that $c(e_i)=a_i$. We do so by using the equations defining $W$ to extend $a_{k+1},\dots,a_{k+n+m}$ to a coloring of $\Gamma$. Every regular or singular edge has its color given by either the $a_i$'s or the definition of $W$. The edges of $\Phi_V$ can be colored from the leaves and introverted edges inwards; notice that if $\Phi_V$ has an extraverted edge $e$, we removed one of the leaves or introverted edges reachable from $e$, therefore we can first determine the color of $e$ by working inwards on the other side, and then use $e$ to determine the color of the missing edge.
\end{proof}

\begin{remark}
	Notice that if $d_{i}$ corresponds to a regular edge, it has $2$ different non-zero minimal shifts with opposite signs, and thus its polytope of minimal shifts is given by a segment that contains $0$.
\end{remark}

The following lemma is the key reason for choosing the operators the way we did; we will use it to guarantee that in the recurrence relations we obtain from them, there are no cancellations among the minimal shifts.
\begin{lemma}\label{lemma:fglinindep}
	Let $b_1,\dots,b_k,d_{k+1},\dots,d_{3g-3}$ be the operators from Definitions \ref{def:extopfirst}, \ref{def:extopsecond} and \ref{def:intoperators}, and $y_1,\dots,y_{3g-3}$ be a non-zero minimal shift for each of them. Then the vectors $\xi_i:=(f(y_i),g(y_i))\in\Z^{N}$ are linearly independent.
\end{lemma}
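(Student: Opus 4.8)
The plan is to row-reduce the matrix whose rows are $\xi_1,\dots,\xi_{3g-3}$ using the external operators, and then recognise what is left as a unipotent modification of a known basis of $g(V)$.

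Write $\Z^N=\Z^{k}\oplus\Z^{\mathcal{E}'}$, split the $3g-3$ operators into the $k$ external ones $b_1,\dots,b_k$ and the $n+m$ internal ones $d_{k+1},\dots,d_{3g-3}$, and form the matrix $M$ with rows $\xi_i$. By Corollary \ref{cor:extoperators} the $k\times k$ block $\big(f(y_i)\big)_{i\le k}$ of $M$ (the $f$-components of the external operators) is invertible over $\Q$; using this block we may clear the $\Z^{k}$-entries of the internal rows by row operations. After this clearing, the internal row $i$ becomes $\big(0,\,g(z_i)\big)$ with $z_i:=y_i-u_i$, where $u_i$ is the unique combination of the external minimal shifts with $f(u_i)=f(y_i)$; then $z_i$ lies in the direction of $V$ (as $f(z_i)=0$) and $g(z_i)\in g(V)$. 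Consequently $M$ has rank $3g-3$ if and only if $g(z_{k+1}),\dots,g(z_{3g-3})$ are linearly independent, and since $\dim g(V)=n+m$ this is equivalent to their forming a basis of $g(V)$.

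By Lemma \ref{lemma:basisV} the vectors $g(y_{k+1}),\dots,g(y_{3g-3})$ already form such a basis, so writing $g(u_i)=\sum_{l}C_{il}\,g(y_l)$ (which makes sense since $g(u_i)\in g(V)$ too), we have $g(z_i)=\sum_l(\delta_{il}-C_{il})g(y_l)$, and it suffices to show that $I-C$ is invertible. For this I would order the internal operators so that the pivot edge $e_i$ with $g(y_i)=\pm2\delta_{e_i}$ decreases along the flow in $\Phi_V$ (using $\succ$, together with $\succ'$ inside each chain of introverted edges, and putting the regular-edge operators last). The claim is that in this ordering $C$ is strictly upper-triangular: the combination $u_i$ lifting $f(y_i)=-4\delta_v$ (or the analogous shift from Lemmas \ref{lemma:intopsecond}, \ref{lemma:intopsecondintro}) is supported on external operators of the second kind $T^{w}$ with $w$ at or just past $v$, whose $g$-components $-2\delta_{e(w)}$ involve only edges strictly downstream of $v$ in $\Phi_V$; these are the pivots of internal operators strictly below $i$ in the chosen order. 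Granting this, $I-C$ is unipotent, hence invertible, and the proof is complete.

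The main obstacle is exactly that last claim — the strict triangularity of $C$. It is where the careful design of the paths $\gamma_v$ in Definition \ref{def:extopsecond} (turning left except to avoid introverted edges) and of the pruning in Definition \ref{def:intoperators} (one internal operator removed per extraverted edge of $\Phi_V$, matched to a second-kind external operator carrying the same $g$-direction) is used: one must trace how the directed paths chosen in the bidirected graph $\Phi_V$ meet near introverted and extraverted edges, showing that no $g$-direction can be "fed back" upstream, so that the triangular order survives. Checking this is the genuinely combinatorial part of the argument.
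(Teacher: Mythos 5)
Your reduction is structurally sound as far as it goes: the $k\times k$ block $\bigl(f(y_i)\bigr)_{i\le k}$ is indeed invertible by Corollary \ref{cor:extoperators}, the cleared internal rows $z_i=y_i-u_i$ do lie in $\ker f$, i.e.\ in the direction of $V$, and since $g(z_i)=g(y_i)-g(u_i)$ with both terms in $g(V)$ (the first by Lemma \ref{lemma:basisV}), the change-of-basis matrix $I-C$ is well defined and the lemma is equivalent to its invertibility. This is a genuinely different organization from the paper, which never row-reduces: it assumes a dependence $\Xi=\sum a_i\xi_i=0$ and kills the coefficients one at a time by evaluating $\Xi$ at carefully chosen vertices and edges of $\Phi_V$, in an order dictated by the structure of the paths.

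However, there is a genuine gap, and you have named it yourself: the strict upper-triangularity of $C$ is asserted, not proved, and it is precisely where the entire content of the lemma lives. Everything before it is formal linear algebra; the claim that the external combination $u_i$ lifting $f(y_i)=-4\delta_v$ (or $-4\delta_v-2\delta_w$, or $-4(\delta_v+\delta_{v'})$, depending on whether the first edge of $\gamma_{e_i}$ is directed or extraverted and whether $e_i$ is introverted) has $g$-support only on pivot edges strictly downstream of $v$ is not at all obvious. Each second-kind external shift has $f$-value $-2\delta_v-2\delta_w$ supported on \emph{two} vertices (the endpoints of $\gamma_v$), so solving $f(u_i)=f(y_i)$ forces a chain of cancellations that propagates through the flow subgraph, and near introverted edges, extraverted edges and roots this propagation can a priori feed $g$-directions back to pivots at or above $i$ in your order. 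Ruling that out requires exactly the case analysis the paper carries out in Steps 1--7 of its proof (directed incoming edge with zero, one, or two introverted outgoing edges; larger and smaller endpoints of extraverted edges; roots), and until that analysis is done it is not even clear that $C$ is triangular rather than merely that $I-C$ is invertible for some subtler reason. To complete your argument you would need to supply that combinatorial verification, at which point you would have reproduced the substance of the paper's proof in a different packaging.
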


\begin{proof}
	Suppose that $\Xi:=\sum_{i=1}^{3g-3} a_i \xi_i=0$. 
	
	First notice that if $e$ is a regular edge, and $d_j$ is the corresponding internal operator, then $f(y_j)=0$ and $g(y_j)=\pm2\delta_e$, and all other $y_i$'s have disjoint support from $\delta_e$; therefore $a_j=0$. Similarly, if $e$ is a singular edge, $\xi_i(e)=0$ unless $\xi_i$ is the minimal shift of an external operator of the first kind. Therefore by evaluating $\Xi$ on the subspace spanned by singular edges, we obtain that (by Lemma \ref{lemma:exttopfirstindep}) all those $\xi_i$'s must have trivial coefficient in $\Xi$.
	
	Therefore we only need to worry about external operators of the second kind and internal operators associated to a maximal directed path in $\Phi_V$.
	
	Now we introduce some notation that will greatly simplify the flow of the proof. For any $v\in \Phi_V$, we have the path $\gamma_v$ defined in Definition \ref{def:extopsecond}, the associated path operator, and the unique minimal shift which we will denote $y_v$. With a small abuse of notation, we will say that $y_v$ is the minimal shift of $\gamma_v$. Similarly, any maximal path in $\Phi_V$ among the ones defining the internal operators is determined by its final edge $e$; we call this path $\gamma_e$ and the minimal shift of the resulting operator $y_e$. Notice that $e$ must be either a leaf of $\Phi_V$ or an introverted edge. For any of these minimal shifts, call $\xi_v:=\left(f(y_v),g(y_v)\right)$ (and similarly for $\xi_e$). Therefore we need to consider $\Xi=\sum_e a_e \xi_e+\sum_v a_v\xi_v=0$; we say that $v$ or $e$ are \emph{in the support of $\Xi$} if its coefficient is non-zero in $\Xi$. 
	
	We break up the proof in several steps.

	\emph{Step 1: if the incoming edge to $v$ is directed, and no outgoing edges is introverted, then $v$ does not belong to the support of $\Xi$.}
	
	Based on the assumption, $v$ does not belong to a root or an extraverted edge; it also does not belong to any introverted edge. Then Lemmas \ref{lemma:intopsecond} and \ref{lemma:intopsecondintro} imply that for any $e$, $\xi_e(v)=0$; similarly Lemma \ref{lemma:extopsecond} implies that $\xi_{v'}(v)=0$ for $v\neq v'$. Therefore $0=\Xi(v)=-2a_v$ implies $a_v=0$.

	\emph{Step 2: if the incoming edge to $v$ is directed, and exactly one outgoing edge is introverted, then $v$ is not in the support of $\Xi$.}
		
	Suppose $e'$ is the introverted outgoing edge and $e$ is the other one (either a leaf or directed); then $\gamma_v=e$; call $w$ the other endpoint of $e'$. 
	
	If $\gamma_w$ does not contain $e'$, then no other $\gamma_{w'}$ does, which means that $0=\Xi(e')=-2a_{e'}=0$ and $0=\Xi(v)=-2a_v=0$ and we have the desired result. 
	
	Suppose instead that $\gamma_w=e'$: then by definition $\gamma_{e'}$ contains $w$ in its interior and $0=\Xi(e')=-2a_{e'}-2\sum a_{w'}$ where the sum runs over all $w'$s such that $\gamma_{w'}$ contains $e'$.  By evaluating $\Xi$ in $v$ we then obtain that $0=\Xi(v)=-2a_v-2a_{e'}-2\sum a_{w'}=-2a_v$ and $v$ is not in the support of $\Xi$.

	\emph{Step 3: if the incoming edge to $v$ is directed, and both outgoing edges are introverted, then $v$ is not in the support of $\Xi$.}
	
	Call $e,e'$ the outgoing edges, and suppose that $e$ is larger than $e'$, so that $\gamma_v=e$. Then $\gamma_e$ has $v$ in its interior; suppose that $\gamma_{e'}$ contains $v$ as an endpoint. Call $W_{e'}$ the set of vertices such that $e'\subseteq \gamma_w$. For all $w\in W_{e'}$, $\gamma_{w}$ has $v$ as its endpoint, which means that $0=\Xi(v)=-2a_v-2a_{e'}-2\sum_{w\in W_{e'}}a_w$. From $0=\Xi(e')=-2a_{e'}-2\sum_{w\in W_{e'}}$ we find that $v$ does not belong to the support of $\Xi$. 
	
	If instead $\gamma_{e'}$ does not contain $v$ as an endpoint, then by Definition \ref{def:intoperators} $\gamma_w$ does not contain $v$, where $w$ is the other endpoint of $e'$. In this case, no $w'\neq v$ is such that $v$ is an endpoint of $\gamma_{w'}$ and $v$ is not in the support of $\Xi$ as before.
	
	To recap, at this step the only vertices that could belong to the support of $\Xi$ belong to roots or extraverted edges.
	
	\emph{Step 4: if $v$ is the larger endpoint of an extraverted edge, then $v$ does not belong to the support of $\Xi$.}
	
	Call $e$ the final edge of $\gamma_v$. When defining the operators we did not use $\gamma_e$; furthermore, any $w\neq v$ such that $e\subseteq\gamma_w$ is not in the support of $\Xi$. This is because if $\gamma_w\cap \gamma_v$ is non-empty then $\gamma_w\subseteq \gamma_v$ and thus $w$ has a directed incoming edge (therefore is not in the support by the previous steps). This means that $0=\Xi(e)=-2a_v$ and we have the desired result.
	
	\emph{Step 5: if $e$ is a leaf or an introverted edge, and $\gamma_e$ contains the larger endpoint of an extraverted edge in its interior, then $e$ does not belong to the support of $\Xi$.}
	
	By the previous steps there are no $w$'s in the support of $\Xi$ such that $e\subseteq \gamma_w$, which means that $0=\Xi(e)=-2a_e$ and $e$ does not belong to the support of $\Xi$.
	
	\emph{Step 6: no $v$ belongs to the support of $\Xi$.}
	
	The only cases left to check are when $v$ belongs to a root of $\Phi_V$, or when it is the smaller endpoint of an extraverted edge. We carry out the latter since the former will be identical.
	
	Let $e$ be the final edge of $\gamma_v$. If $e'\neq e$ and $v$ is in the interior of  $\gamma_{e'}$, then $e'$ is not in the support of $\Xi$. To see this, notice that there are no $w$'s in the support of $\Xi$ such that $e'\subseteq \gamma_w$ (following the same reasoning as Step 4), which means that $0=\Xi(e')=-2a_{e'}$. Therefore $0=\Xi(v)=-2a_v-4a_e$. 
	
	Then no $w\neq v$ such that $e\subseteq\gamma_w$ belongs to the support of $\Xi$. As usual if $e'\neq e$, $e$ is not contained in $\gamma_{e'}$. This means that $0=\Xi(e)=-2a_e-2a_v$, which combined with $-2a_v-4a_e=0$ implies that $v$ is not in the support of $\Xi$.
	
	\emph{Step 7: no $e$ belongs to the support of $\Xi$.}
	
	Because there are no $v$'s in the support of $\Xi$, and because $y_{e}(e')=0$ if $e\neq e'$, we have that $0=\Xi(e)=-2a_e$ and $e$ is not in the support of $\Xi$.
	
\end{proof}
\begin{corollary}\label{cor:linindep}
The elements $y_1,\dots,y_{3g-3}$ of Lemma \ref{lemma:fglinindep} are linearly independent.
\end{corollary}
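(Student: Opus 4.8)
The plan is to deduce this at once from Lemma~\ref{lemma:fglinindep}, using that linear independence is inherited under pullback along a linear map. Both $f\colon\Z^{\mathcal{E}}\to\Z^{k}$ and $g\colon\Z^{\mathcal{E}}\to\Z^{\mathcal{E}'}$ are group homomorphisms, hence so is
$$\Theta\colon\Z^{\mathcal{E}}\longrightarrow\Z^{k}\times\Z^{\mathcal{E}'},\qquad \Theta(y)=\bigl(f(y),g(y)\bigr),$$
and by the very definition of the vectors $\xi_i$ in Lemma~\ref{lemma:fglinindep} we have $\Theta(y_i)=\xi_i$ for $1\le i\le 3g-3$. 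Suppose now that $\sum_{i=1}^{3g-3}a_iy_i=0$ for scalars $a_i$; clearing denominators we may assume $a_i\in\Z$. Applying $\Theta$ gives $\sum_{i=1}^{3g-3}a_i\xi_i=0$, and Lemma~\ref{lemma:fglinindep} forces $a_i=0$ for every $i$. Hence $y_1,\dots,y_{3g-3}$ are linearly independent in $\Z^{\mathcal{E}}$.

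There is nothing delicate in this last step: the combinatorial content is entirely contained in Lemma~\ref{lemma:fglinindep}, and the corollary merely repackages it in the form we shall actually use. Concretely, in the induction step below we want to regard the operators $b_1,\dots,b_k,d_{k+1},\dots,d_{3g-3}$ as acting along $3g-3$ genuinely independent directions of $\Z^{\mathcal{E}}\cong\Z^{3g-3}$, so that one can run a Chamber-lemma-type argument in all of them; linear independence of the $y_i$ is exactly what guarantees that these directions span, while linear independence of the $\xi_i$ (together with Corollary~\ref{cor:uniqueminimalshift}) guarantees that monomials in the $b_i$ and $d_j$ still have a unique minimal shift, so that the resulting recurrences have non-vanishing dominant coefficients.
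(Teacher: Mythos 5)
Your proof is correct and is exactly the (implicit) argument the paper intends: since $y\mapsto(f(y),g(y))$ is linear and the images $\xi_i$ are linearly independent by Lemma~\ref{lemma:fglinindep}, the $y_i$ must be as well. The paper leaves this step unproved precisely because it is this one-line pullback observation.
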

\begin{lemma}\label{lemma:polytope}
	Suppose that among the internal operators $d_{k+1},\dots,d_{3g-3}$, the last $l$ correspond to the regular edges $e_{3g-3-l+1},\dots,e_{3g-3}$; denote with $x_j$ the unique minimal shift of the $j$-th operator for $j\leq 3g-3-l$. Consider the monomial $M=\prod_{i=1}^{k}b_i^{n_i}\prod_{j=k+1}^{3g-3}d_j^{n_j}$. Then the polytope of minimal shifts of $M$ is given by $$\sum_{i=1}^{3g-3-l}n_ix_i+\left\{\sum_{j=3g-2-l}^{3g-3}r_j\delta_{e_j},-2n_j\leq r_j\leq 2n_j\right\}$$.
	
\end{lemma}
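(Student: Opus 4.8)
The plan is to split $M = M_1 M_2$ with $M_1 := \prod_{i=1}^{k} b_i^{n_i}\prod_{j=k+1}^{3g-3-l} d_j^{n_j}$ gathering the operators that have a unique minimal shift and $M_2 := \prod_{j=3g-2-l}^{3g-3} d_j^{n_j}$ gathering the internal operators attached to regular edges, then to analyze the two factors separately and recombine them via Lemma \ref{lemma:uniqueminimalshift}. This splitting respects the order in which the operators occur in $M$, so no commutation is required.

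For $M_1$: by Lemmas \ref{lemma:extopfirst}, \ref{lemma:extopsecond}, \ref{lemma:intopsecond} and \ref{lemma:intopsecondintro}, each of $b_1,\dots,b_k,d_{k+1},\dots,d_{3g-3-l}$ has a unique minimal shift $x_i$, and in every case all coordinates of $f(x_i)$ are $\leq 0$, hence $f(x_i)$ is non-positive in the chosen lexicographic order. I would then apply Lemma \ref{lemma:uniqueminimalshift} repeatedly: first, by induction on the exponent, to see that each power $b_i^{n_i}$ and $d_j^{n_j}$ (with $j\leq 3g-3-l$) has a unique minimal shift (namely $n_ix_i$, resp.\ $n_jx_j$); then, one factor at a time, to the product of all these powers. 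At each step the partial product has a unique minimal shift by the previous step, and every minimal shift involved is a sum of the $f(x_i)$'s and hence non-positive, so the hypotheses of Lemma \ref{lemma:uniqueminimalshift} are met. The conclusion is that $M_1$ has a unique minimal shift, equal to $X_1 := \sum_{i=1}^{3g-3-l} n_i x_i$.

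For $M_2$: by Lemma \ref{lemma:intopfirst} each $d_j$ with $3g-2-l\leq j\leq 3g-3$ has its three shifts $0$ and $\pm 2\delta_{e_j}$ in $\ker f$, since $e_j$ is a regular edge and hence $f(\delta_{e_j})=0$. Therefore every shift of $M_2$ has $f$-value $0$, so all shifts of $M_2$ are minimal, and they all lie in the box $B:=\{\sum_{j=3g-2-l}^{3g-3} r_j\delta_{e_j}\ :\ -2n_j\leq r_j\leq 2n_j\}$. Conversely, for each sign vector $(\varepsilon_j)\in\{\pm1\}$ the extreme element $\sum_j\varepsilon_j2n_j\delta_{e_j}$ is genuinely a shift of $M_2$: it decomposes uniquely as a sum of shifts of the individual factors $d_j^{n_j}$ (the edges $e_j$ being pairwise distinct, contributions of different factors cannot overlap), and within each $d_j^{n_j}$ the coefficient of $\pm 2n_j\delta_{e_j}$ is a product of the leading coefficients of Lemma \ref{lemma:curveActions}, which do not vanish on admissible colorings; so no cancellation can occur. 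These extreme elements are exactly the vertices of $B$, so the polytope of minimal shifts of $M_2$ equals $B$.

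Finally, since $M_1$ has a unique minimal shift and all minimal shifts of $M_1$ and $M_2$ are non-positive, Lemma \ref{lemma:uniqueminimalshift} applied to $M=M_1M_2$ gives that the minimal shifts of $M$ are exactly $\{X_1+y\ :\ y \text{ a minimal shift of }M_2\}$; passing to convex hulls yields that the polytope of minimal shifts of $M$ is $X_1+B$, which is the claimed formula. The one delicate point is the non-cancellation argument for $M_2$, where it is essential that the internal operators of regular edges are each supported on a single, distinct edge; the rest is a routine assembly of the minimal-shift lemmas of Section \ref{sec:min-shifts}.
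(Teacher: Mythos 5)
Your proposal is correct and follows essentially the same route as the paper: split $M$ into the factor with unique minimal shifts and the factor of regular-edge operators, compute each via repeated applications of Lemma \ref{lemma:uniqueminimalshift} (together with Lemmas \ref{lemma:extopfirst}--\ref{lemma:intopsecondintro} and \ref{lemma:curveActions}), and recombine. The only cosmetic difference is that for $M_2$ you verify the box directly by exhibiting its vertices as non-cancelling shifts, whereas the paper invokes the second branch of Lemma \ref{lemma:uniqueminimalshift} (polytopes lying in subspaces meeting only at $0$); both are valid.
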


\begin{proof}
	First notice that the operator $M'=\prod_{i=1}^{k}b_i^{n_i}\prod_{j=k+1}^{3g-3-l}d_j^{n_j}$ has a unique minimal shift equal to $x'=\sum_{i=1}^{3g-3-l}n_ix_i$; this can be proved by repeated applications of Lemma \ref{lemma:uniqueminimalshift}. Following this, notice that applying Lemma \ref{lemma:curveActions} repeatedly we can see that the polytope of minimal shifts of $d_{j}^{n_j}$ is a segment whose endpoints are $\pm2n_j\delta_{e_j}$; once again applying Lemma \ref{lemma:uniqueminimalshift} we obtain that the polytope of minimal shifts of $M'':=\prod_{j=3g-2-l}^{3g-3}d_j^{n_j}$ is $\left\{\sum_{j=3g-2-l}^{3g-3}r_j\delta_{e_j},-2n_j\leq r_j\leq 2n_j\right\}$. A final application of Lemma \ref{lemma:uniqueminimalshift} gives the desired result.
\end{proof}

\begin{corollary}\label{cor:polytope}
	Take $M_1,M_2$ two monomials as in Lemma \ref{lemma:polytope}, and call $\mathcal{P}_1,\mathcal{P}_2$ their respective polytopes of minimal shifts. Then:
	\begin{itemize}
		\item if $\mathcal{P}_1\cap \mathcal{P}_2\neq\emptyset$, then $M_1$ and $M_2$ have the same multidegree up to variable $d_{3g-3-l}$ (i.e. except for variables corresponding to regular edges);
		\item if $\mathcal{P}_1=\mathcal{P}_2$ then $M_1=M_2$.
		
	\end{itemize}
\end{corollary}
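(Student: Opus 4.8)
The plan is to read everything off the explicit shape of the minimal-shift polytope furnished by Lemma \ref{lemma:polytope}. Write $m':=3g-3-l$ and, for $t=1,2$, let $(n_i^{(t)})_{1\le i\le 3g-3}$ be the multidegree of $M_t$. By Lemma \ref{lemma:polytope},
$$\mathcal{P}_t = v_t + B_t,\qquad v_t=\sum_{i=1}^{m'}n_i^{(t)}x_i,\qquad B_t=\Big\{\,\sum_{j=m'+1}^{3g-3}r_j\delta_{e_j}\;:\;-2n_j^{(t)}\le r_j\le 2n_j^{(t)}\,\Big\},$$
where $e_{m'+1},\dots,e_{3g-3}$ are the regular edges. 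I would set $L_1:=\mathrm{Span}_{\Q}\{x_1,\dots,x_{m'}\}$ and $L_2:=\mathrm{Span}_{\Q}\{\delta_{e_{m'+1}},\dots,\delta_{e_{3g-3}}\}$, so that $v_t\in L_1$ and $B_t\subseteq L_2$ for $t=1,2$.

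The one piece of linear algebra I need is that $L_1\cap L_2=0$ and that $x_1,\dots,x_{m'}$ are linearly independent. Both follow from Corollary \ref{cor:linindep}: the operators $b_1,\dots,b_k,d_{k+1},\dots,d_{m'}$ have unique minimal shifts $x_1,\dots,x_{m'}$, while each internal operator $T^{e_j}$ attached to a regular edge ($j>m'$) has $\pm 2\delta_{e_j}$ as a non-zero minimal shift; hence $x_1,\dots,x_{m'},\delta_{e_{m'+1}},\dots,\delta_{e_{3g-3}}$ agree, up to nonzero scalars, with a choice of non-zero minimal shifts $y_1,\dots,y_{3g-3}$, which are linearly independent by Corollary \ref{cor:linindep}. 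In particular the $m'$ vectors spanning $L_1$ together with the $l$ coordinate vectors spanning $L_2$ form a linearly independent family, giving both claims at once.

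Granting this, the first bullet is short: if $p\in\mathcal{P}_1\cap\mathcal{P}_2$, write $p=v_1+b_1=v_2+b_2$ with $b_t\in B_t\subseteq L_2$; then $v_1-v_2=b_2-b_1$ lies in $L_1\cap L_2=0$, so $v_1=v_2$, that is $\sum_{i\le m'}n_i^{(1)}x_i=\sum_{i\le m'}n_i^{(2)}x_i$, and linear independence of $x_1,\dots,x_{m'}$ forces $n_i^{(1)}=n_i^{(2)}$ for all $i\le m'=3g-3-l$, which is exactly the statement that $M_1$ and $M_2$ have the same multidegree in all variables except those corresponding to regular edges. For the second bullet, from $\mathcal{P}_1=\mathcal{P}_2$ one first obtains $v_1=v_2$ as above, whence $B_1=\mathcal{P}_1-v_1=\mathcal{P}_2-v_2=B_2$; but $B_t$ is the coordinate box $\prod_{j>m'}[-2n_j^{(t)},2n_j^{(t)}]$ sitting in the coordinate subspace $L_2$, and equality of two such boxes forces $n_j^{(1)}=n_j^{(2)}$ for every regular edge $e_j$. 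Combined with the previous paragraph this yields $M_1=M_2$.

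I do not expect a genuine obstacle here: the argument is essentially bookkeeping layered on top of Lemma \ref{lemma:polytope} and Corollary \ref{cor:linindep}. The only point that deserves a line of care is to confirm that the unique-minimal-shift hypothesis of Lemma \ref{lemma:polytope} and the independence statement of Corollary \ref{cor:linindep} are being applied to exactly the operators indexed $1,\dots,m'$ (the external operators of both kinds, and the internal operators coming from maximal directed paths in $\Phi_V$), which is guaranteed by their construction in Definitions \ref{def:extopfirst}, \ref{def:extopsecond}, \ref{def:intoperators} together with Lemmas \ref{lemma:extopfirst}, \ref{lemma:extopsecond}, \ref{lemma:intopsecond}, \ref{lemma:intopsecondintro}.
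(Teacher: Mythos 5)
Your proof is correct and follows essentially the same route as the paper: both arguments rest on the linear independence of the family consisting of the unique minimal shifts $x_1,\dots,x_{3g-3-l}$ together with the vectors $\delta_{e_j}$ for the regular edges (Lemma \ref{lemma:fglinindep} / Corollary \ref{cor:linindep}), applied first to a common point of the two polytopes and then to their vertices. Your explicit decomposition $\mathcal{P}_t=v_t+B_t$ and the box-equality step are just a slightly more spelled-out version of the paper's ``repeat the same argument for every vertex.''
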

\begin{proof}
	Call $(n_1,\dots,n_{3g-3})$ the multidegree for $M_1$, and $(s_1,\dots,s_{3g-3})$ the multidegree for $M_2$.
	Suppose that there are $y_1,y_2$ belonging to $\mathcal{P}_1,\mathcal{P}_2$ respectively and that $y_1=y_2$. We know from Lemma \ref{lemma:fglinindep} that the $x_i$'s and the $\delta_{e_j}$'s are linearly independent, which implies that $n_i=s_i$ for all $i\leq 3g-3-l$, proving the first part of the lemma. If furthermore $\mathcal{P}_1=\mathcal{P}_2$ then we can repeat the same argument for every vertex of this polytope to prove that the rest of the multidegrees must coincide too.
\end{proof}

\begin{lemma}\label{lemma:effectivemonomial}
	Let $V$ be a simple barely admissible subspace, and suppose $M_1,M_2$ are operators with the following property: there exist $U_1,U_2$, each a finite union of barely admissible subspaces, such that
	\begin{itemize}
		\item for any $c\in \Delta\setminus U_i$, $c$ is effective for $M_i$;
		\item $U_i$ is transverse to $V$.
	\end{itemize}
	
	 Then there exists $U\subsetneq \Delta$ finite union of barely admissible subspaces such that for any $c\in \Delta\setminus U$, $c$ is effective for $M_1M_2$ and $U$ is transverse to $V$.
\end{lemma}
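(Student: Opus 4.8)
The statement is essentially a "composition of generically-effective operators is generically-effective" lemma, and the natural approach is to combine the product formula for shifts (Equation \eqref{eqn:productformula}), Corollary \ref{cor:effectiveness} (effectiveness is preserved under multiplication by an operator with non-positive minimal shift), Lemma \ref{lemma:effectiveness} (effectiveness propagates under adding a non-negative coloring), and the fact that a finite union of barely admissible subspaces which is transverse to $V$ is proper in $V$ (and each piece has positive codimension in $V$). So the plan is: first recall that $M_1$ and $M_2$ are monomials of the type considered in Lemma \ref{lemma:polytope}, so each has a polytope of minimal shifts all of whose vertices are non-positive under $f$ (the external and internal operators were built precisely so that their minimal shifts push colors \emph{inward}, i.e. are non-positive for $f$; this is recorded in Lemmas \ref{lemma:extopfirst}, \ref{lemma:extopsecond}, \ref{lemma:intopfirst}, \ref{lemma:intopsecond}, \ref{lemma:intopsecondintro}). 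Hence by Corollary \ref{cor:effectiveness} we get: if $c$ is effective for $M_1$ \emph{and} for $M_2$, then $c$ is effective for $M_1M_2$.

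Next I would take $U := U_1 \cup U_2$. This is a finite union of barely admissible subspaces (a finite union of finite unions), and it is transverse to $V$ since each of $U_1, U_2$ is. By Lemma \ref{lemma:unionsimpleNAS} we may if desired rewrite each piece as a finite union of \emph{simple} barely admissible subspaces, so $U$ is of the required form; and since $U$ is transverse to $V$, it is a proper subset of $\Delta$ (indeed $U \cap V \subsetneq V$, and more to the point the complement $\Delta \setminus U$ is non-empty and its intersection with $V$ is non-empty — this is what "transverse" buys us). For $c \in \Delta \setminus U = \Delta \setminus (U_1 \cup U_2)$ we have $c \notin U_1$, so $c$ is effective for $M_1$, and $c \notin U_2$, so $c$ is effective for $M_2$; by the previous paragraph $c$ is then effective for $M_1M_2$. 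This gives exactly the conclusion, with this $U$.

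**Main obstacle.** The one genuinely substantive point is verifying that the hypotheses of Corollary \ref{cor:effectiveness} really apply, i.e. that the minimal shift of $M_2$ (with respect to $f$) is non-positive. Here one must be slightly careful: $M_2$ is a monomial in the operators $b_i, d_j$, some of whose \emph{individual} minimal shifts (e.g. those coming from regular-edge internal operators $T^{e}$) are \emph{not} unique and come in $\pm$ pairs, so $M_2$ has a whole polytope $\mathcal{P}_{M_2}$ of minimal shifts rather than a single one. What is actually true — and what I would spell out — is that every vertex of $\mathcal{P}_{M_2}$ is non-positive under $f$: the regular-edge operators contribute shifts lying in $\ker f$ (Lemma \ref{lemma:intopfirst}), and all the other operators contribute a \emph{unique} minimal shift that is non-positive under $f$ by the cited lemmas; by Lemma \ref{lemma:uniqueminimalshift}/Lemma \ref{lemma:polytope} the minimal shifts of the product are sums of these, hence still non-positive under $f$. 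With that observed, Corollary \ref{cor:effectiveness} applies verbatim (its proof only used that \emph{some} minimal shift of $M_2$ is non-positive, or one applies it iteratively operator-by-operator along the monomial $M_2$, each factor having a non-positive minimal shift), and the rest is the bookkeeping above. The transversality/properness of $U$ is routine once one knows "transverse to $V$" is stable under finite unions, which is immediate from the definition.
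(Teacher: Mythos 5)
There is a genuine gap: your choice $U = U_1 \cup U_2$ is too small, and the appeal to Corollary \ref{cor:effectiveness} does not close it. Effectiveness of $c$ for $M_1$ and for $M_2$ separately is not enough to make $c$ effective for $M_1M_2$: writing $M_2\varphi_{c-y_1} = \sum_j F_{y_j}\varphi_{c-y_1+y_j}$ over \emph{all} shifts $y_j$ of $M_2$ (not just the minimal one), you must then apply $M_1$ to every summand, so you need the expansion \eqref{eqn:operatorformula} for $M_1$ to hold at each shifted coloring $c+y_j-y_1$. Nothing prevents these shifted colorings from landing in $U_1$ even when $c$ itself does not. This is precisely why the paper's proof takes $U$ to be the union of $U_2$ with \emph{all the translates} $y_j - y_1 + U_1$, one for each shift $y_j$ of $M_2$; the conclusion then follows by applying $M_1$ to each summand of $M_2\cdot\varphi_{c+y_1}$. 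Your $U$ omits these translates, so the claimed implication ``$c\notin U_1\cup U_2$ $\Rightarrow$ $c$ effective for $M_1M_2$'' is unjustified.

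Your attempted repair via Corollary \ref{cor:effectiveness} rests on a misreading of its hypothesis. The ``non-positive'' there refers to the minimal shift as a coloring of $\Gamma$ (coordinatewise non-positive in $\Z^{\mathcal{E}}$), since the underlying mechanism is Lemma \ref{lemma:effectiveness}, which needs a non-negative coloring $-y$; it is not the condition ``$f(y)$ non-positive'' that you verify. For the operators actually fed into this lemma the coordinatewise condition fails: the unique minimal shift in Lemma \ref{lemma:intopsecond} is $2\delta_{e_0}-2\sum_{i\ge 1}\delta_{e_i}$, which has a strictly positive entry (the paper even remarks on this at the end of the proof of that lemma, and is careful to apply Corollary \ref{cor:effectiveness} only to the loop-edge factor $T^{\beta_{\partial\gamma}}$, whose shift is genuinely non-positive). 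A secondary point: the lemma is stated for arbitrary operators $M_1,M_2$ satisfying the two bulleted hypotheses, so an argument that only treats monomials in the internal and external operators would in any case prove less than what is asserted.
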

\begin{proof}
	Let $x_1,\dots,x_n$ be the minimal shifts of $M_1$ and $y_1,\dots,y_r$ the shifts of $M_2$, with $y_1$ minimal. We show that $U$ can be chosen as the union of all subspaces $U_2$ and $y_j-y_1+U_1$. If $c$ is effective for $M_2$, and $c+y_j-y_1$ is effective for $M_1$ for all $j$, then $c$ is effective for $M_1M_2$, by simply applying $M_1$ to each summand in $M_2\cdot \phi_{c+y_1}$.
	
\end{proof}

\begin{corollary}\label{cor:effectivemonomial}
	Let $W$ be a subspace of $\Z^{\mathcal{E}}$, and let $V$ be the smallest simple barely admissible subspace containing $W$. Let $M$ be a monomial in the operators associated to $V$; then there exists a subset $U\subsetneq W$, consisting of a finite union of barely admissible subspaces, such that for any $c\in W\setminus U$, $c$ is effective for $M$.
\end{corollary}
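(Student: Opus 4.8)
The plan is to induct on the number of elementary factors of $M$, reducing via Lemma \ref{lemma:effectivemonomial} to the case where $M$ is a single one of the operators $b_1,\dots,b_k,d_{k+1},\dots,d_{3g-3}$ of Definitions \ref{def:extopfirst}, \ref{def:extopsecond} and \ref{def:intoperators}. For each such elementary operator $T$, the last bullet point of Lemmas \ref{lemma:extopfirst}, \ref{lemma:extopsecond}, \ref{lemma:intopfirst}, \ref{lemma:intopsecond} and \ref{lemma:intopsecondintro} asserts that there is a finite union $W_T$ of codimension-one barely admissible subspaces of $V$ such that every $y\in V\setminus W_T$ is effective for $T$. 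However, Lemma \ref{lemma:effectivemonomial} asks for effectiveness on all of $\Delta$ away from a union of barely admissible subspaces \emph{transverse} to $V$, so the first step is to upgrade the elementary lemmas to this ambient form.

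To carry out the upgrade, I would use that, by Lemma \ref{lemma:effectiveness}, the set of colors effective for a fixed $T$ is upward-closed (stable under adding non-negative colorings); since it contains $V\setminus W_T$, it contains every $c\in\Delta$ that dominates some $y\in V\setminus W_T$. It therefore suffices to prove that the set of $c\in\Delta$ dominating no element of $V\setminus W_T$ lies in a finite union of barely admissible subspaces transverse to $V$. This is a routine, if slightly tedious, argument on the polytope $\Delta\subseteq\Z^{\mathcal E}_{\geq 0}$: simplicity of $V$ guarantees that its defining quantities $d(x_1),\dots,d(x_k)$ can be lowered one after another, in the order $\succ$, without interfering, so that a generic $c\in\Delta$ admits a coordinate-lowering retraction onto $V$; the locus where this retraction is undefined (because some $d(x_i)(c)$ has already dropped below $a_i$, or some facet inequality of $\Delta$ blocks the descent) or where it lands inside $W_T$ is, in each case, cut out by one of finitely many equations of the form $d(x)=b$, and none of these contains $V$ (on $V$ one has $d(x_i)=a_i$, and the components of $W_T$ are proper in $V$), so all of them are transverse to $V$.

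Granting the upgraded base case, the induction is formal. Write $M=T_1\cdots T_p$ as a product of elementary operators with multiplicity, peel off $T_1$, and apply Lemma \ref{lemma:effectivemonomial} with $M_1=T_1$ and $M_2=T_2\cdots T_p$: the hypothesis for $M_1$ is the upgraded base case and the one for $M_2$ is the inductive hypothesis, and the conclusion furnishes a finite union $U^{\mathrm{amb}}$ of barely admissible subspaces, transverse to $V$, off which $M$ is effective on $\Delta$. Finally I would set $U:=U^{\mathrm{amb}}\cap W$. For each component $U'$ of $U^{\mathrm{amb}}$ one has $V\not\subseteq U'$, hence $U'\cap V\subsetneq V$; decomposing $U'\cap V$ into simple barely admissible subspaces via Lemma \ref{lemma:unionsimpleNAS}, each piece has dimension strictly less than $\dim V$, so --- $V$ being the smallest simple barely admissible subspace containing $W$ --- none of them contains $W$. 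Thus $U'\cap W\subsetneq W$ for every component, $U\subsetneq W$, and every $c\in W\setminus U$ is effective for $M$, which is the claim.

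The main obstacle is the upgrade in the second paragraph: transferring the effectiveness statement from $V$ to the whole of $\Delta$ while keeping the exceptional locus transverse to $V$. This is the only place where one must dig into the combinatorics of simple barely admissible subspaces --- specifically, that their defining equations are ``independent'' and can be solved by successively lowering coordinates. Everything else, namely the induction on the length of $M$ and the final restriction from $\Delta$ to $W$, is a formal consequence of Lemmas \ref{lemma:effectivemonomial} and \ref{lemma:unionsimpleNAS} once the bookkeeping of ``transverse to $V$'' is tracked through.
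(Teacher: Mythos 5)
Your proposal is correct and takes essentially the same route as the paper: reduce to the elementary operators via Lemma \ref{lemma:effectivemonomial}, invoke Lemmas \ref{lemma:extopfirst}--\ref{lemma:intopsecondintro} for the single-operator case, and dispose of general $W$ by minimality of $V$. The ``upgrade'' you single out as the main obstacle --- passing from effectiveness on $V\setminus W_T$ to effectiveness on $\Delta$ minus a locus transverse to $V$, as the hypotheses of Lemma \ref{lemma:effectivemonomial} formally require --- is precisely the step the paper compresses into ``an immediate consequence'', so your more explicit treatment of it is a refinement of, not a departure from, the paper's argument.
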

\begin{proof}
	The case of $W=V$ is an immediate consequence of Lemmas \ref{lemma:extopfirst}, \ref{lemma:extopsecond}, \ref{lemma:intopfirst}, \ref{lemma:intopsecond}, \ref{lemma:intopsecondintro} establishing the result for each variable, and \ref{lemma:effectivemonomial} establishing the result for a product.
 The general case follows at once, since $W$ cannot be contained in $U$ by minimality of $V$.
\end{proof}

\subsection{Proof of the inductive case}\label{sec:induction2}

\begin{proof}[Proof of the inductive step in Case 2]
	The proof follows the same general lines of Case 1. We set $n=3g-3$ for convenience. Recall that we have a collection $(V_i)_{i\in I}$ of subspaces of $\Z^{\mathcal{E}}=\Z^{n}$, each of codimension at least $1$, such that $$\mathrm{Span}_{\Q(A,\overline{\lambda})}\lbrace \varphi_c \ | \ c\in \Delta \setminus \underset{s\in S}{\bigcup}V_s\rbrace \ \textrm{is finite dimensional}$$
	
	and that we want to prove that we can find $i\in I$, and $V_i^j\subsetneq V_i$ for $j\in S$ such that the collection $(V_s)_{s\in I\setminus i}\cup (V_i^j)_{j\in S}$ also satisfies the above condition.
	
	To start, pick $V_0$ a subspace of minimal codimension among the $V_i$'s; call $\widetilde{V_0}$ the smallest simple barely admissible subspace that contains $V_0$. Recall that $\widetilde{V_0}$ comes with maps $f,g$, with a regular, singular and flow subgraphs, with the internal and external operators of Lemma \ref{lemma:fglinindep}, and with an order on the edges and vertices of the flow subgraph.
	
	 Among the $V_i$'s there might be parallels of $V_0$; our first order of business is to pick correctly one of these subspaces. For any $V_i$ parallel to $V_0$, consider $f(\widetilde{V_i})$ where $\widetilde{V_i}$ is the smallest simple barely admissible subset containing $V_i$ (notice that clearly $\widetilde{V_i}$ and $\widetilde{V_0}$ are parallel). Replace $V_0$ with one of the parallel subspaces that maximize $f(\widetilde{V_i})$ (notice that there could be more than one). Now let $e_1,\dots,e_{r}$ the regular edges and the final edges of internal operators of $\widetilde{V_0}$. Then $\delta_{e_1},\dots,\delta_{e_r}$ form a basis for $g(\widetilde{V_0})$ (in particular $r=\widetilde{k}$). We pick a subset $J\subseteq \{1,\dots,\widetilde{k}\}$ applying Lemma \ref{lemma:lin_alg} to $g(V_0)\subseteq g(\widetilde{V_0})$. Any translate of $V_0$ contained in $\widetilde{V_0}$ will be of the form $V_0+\sum_{j\in J} x_j\delta_{e_j}$. Among these subspaces that belong to $(V_i)_{i\in I}$, we redefine $V_0$ to be the one maximizing $\left(g(x_j)\right)_{j\in J}$ in the lexicographic order.
	
	Finally pick $J'\subseteq \{1,\dots,n\}$ according to Lemma \ref{lemma:non-vanishing_poly}; let $a_i:=T^{\alpha_i}$. Let $b_1,\dots,b_{n-\widetilde{k}}$ be the external operators of $\widetilde{V_0}$. Let $d_1,\dots,d_{\widetilde{k}}$ be the internal operators of $\widetilde{V_0}$, numbered so that $d_i$ corresponds to the edge $e_i$.

	 For any $l\in \{1,\dots,\widetilde{k}\}\setminus J$ we consider the family of operators
	$$F_l:=\{a_{j'}\}_{j'\in J'}\cup \{d_j\}_{j\in J\cup \{l\}}\cup\{b_1,\dots,b_{n-\widetilde{k}}\}.$$
	
	For any $l,$ the set $F_l$ contains exactly $3g-2$ multicurve operators, therefore by Lemma \ref{lem:linDep} there is a non-zero polynomial $P_l$ such that $P_l(a_{j'},d_j,b_i)\cdot\varphi_c=0$ as an element of $\Sk^{\overline{\lambda}}(M)$ for any $c\in \Delta$.
	
	Let $D_l$ be the total degree of $P_l$ in the variables $b_i$ and $d_j$; consider $W\subseteq V_0$ the subset with the following properties:
	\begin{itemize}
		\item $c\in W$ if $c$ is not effective for at least one monomial of $P_l$; or 
		\item $c\in W$ if $c$ is at distance less than or equal to $4D_l$ from any other subspace $V_i$, with $i\in I$, such that $V_i$ is not parallel to $V_0$.
	\end{itemize}
	
	By Corollary \ref{cor:effectivemonomial}, $W$ is a finite union of subspaces of $V_0$ of codimension at least $1$. Call $W_0:=V\setminus W$.
	
	\textbf{Claim:} For any $l\in \{1,\dots,k\}\setminus J$ and $c\in W_0$, we have a relation of the form
	
	$$ R_{d_l}(A,A^{c_{i_1}},\ldots , A^{c_{i_k}})\varphi_{c+d_l u_l} + \ldots + R_{-d_l}(A,A^{c_{i_1}},\ldots , A^{c_{i_k}})\varphi_{c-d_l u_l} \equiv 0,$$
	where the vector $u_l$ is a generator of the line $D_l$ given in Lemma \ref{lemma:lin_alg}, $d_l$ is an integer, $R_{-d_l}$ is a non-zero polynomial in $k+1$ variables, and $\equiv$ is equality up to an element of 
	$$E_0=\mathrm{Span}_{\Q(A,\overline{\lambda})}\lbrace \varphi_c \ | \ c \in \Delta \setminus \underset{s\in S}{\bigcup} V_s \rbrace.$$
	
	\begin{proof}[Proof of the Claim]
		We assume for simplicity that no final edges of the internal operators are loop edges. If any final edge is a loop edge, it simply changes some of the following formulas by removing a factor of $2$ in some minimal shift (since the minimal shift of the curve operator of a loop edge is $-\delta_e$ rather than $-2\delta_e$). This is inconsequential and only makes formulas more awkward.
		
		Consider $M$ a monomial of $P_l$. The $a_i$'s act on $\varphi_c$ as multiplication by a scalar, so $M\cdot \varphi_c=R(A,A^{c_{i_1}},\dots,A^{c_{i_k}}) M'\cdot \varphi_c$ where $M'$ is simply $M$ with all $a_i$ variables removed. Rewrite all monomials of $P_l$ like this, and call $M_1,\dots,M_r$ the resulting monomials. Let $\mathcal{P}_1,\dots,\mathcal{P}_r$ their polytopes of minimal shifts; by Lemma \ref{lemma:polytope}, $f(\mathcal{P}_i)=f(x_i)$ for some $x_i$; select a monomial $M_i$ following these three criteria, in order of importance:
		\begin{itemize}
			\item $f(x_i)$ minimal;
			\item $g(x_i)$ minimal;
			\item maximal degree in $d_l$;
			\item maximal degree in the other $d_j$s, in the lexicographic order.
		\end{itemize}
		
		Let $n_j$ be the degree of $M_i$ in $d_j$. 
		
		By Lemma \ref{lemma:polytope} and Corollary \ref{cor:polytope} there is no other monomial $M_r$ of $P_r$ that contains the shifts $x_\pm:=x_i \pm 2n_l\delta_{e_l}-\sum_{j\neq l} 2n_j\delta_{e_j}$, where the sum is over regular edges (notice that if $e_l$ is not regular, $x_+$ is not a minimal shift). For any $c\in W_0$, let $\widetilde{c}:=c-x_i+\sum_{j\neq l} 2n_j\delta_{e_j}$.
		
		For any monomial $M_r$, if $x_r\neq x_i$, $M_r\cdot \varphi_{\widetilde{c}}\in E_0$. To see this, suppose that there is a shift $x$ such that $\varphi_{\widetilde{c}+x}\in \underset{s\in S}{\bigcup} V_s$. By construction $\widetilde{c}+x$ cannot belong to one of the $V_s$ that are not parallel to $V_0$, since the distance of $c$ from any of these is more than $4D_l$, and both $x$ and $c-\widetilde{c}$ have $L^1$-norm less than or equal to $2D_l$. Therefore $V_s$ must be parallel to $V_0$; however in this case because of our choice of $V_0$, we would have either $$f(c)\geq f(\widetilde{c}+x)=f(c)+f(x)-f(x_i)\geq f(c)+f(x_r)-f(x_i)>f(c)$$
		by minimality (if $f(x)>f(x_i)$), or if $f(x)=f(x_i)$, then $f(x)$ is minimal and we would have $$g(c)\geq g(\widetilde{c}+x)=g(c)+g(x)-g(x_i)\geq g(c)+g(x_r)-g(x_i)>g(c).$$
		
		If instead $x_r=x_i$, $$M_r\cdot \varphi_{\widetilde{c}}=R_{n'_l}\varphi_{c+2n'_l \delta_{e_l}}+R_{-n'_l}\varphi_{c-2n'_l \delta_{e_l}}+\sum_{j=-n'_l+1}^{n'_l-1}R_{j}\varphi_{c+2j\delta_{e_l}}+X$$
		
		with $X\in E_0$ and $R_{-n_r}\neq 0$, where $n'_j$ is the degree of $M_r$ in the variable indexed by $j$. To see this, notice that if $x$ is a non-minimal shift, then we can apply the above reasoning to show that $\varphi_{\widetilde{c}+x}\in E_0$. Therefore the only summands in $M_r\cdot\varphi_{\widetilde{c}}$ are the ones corresponding to the minimal shifts of $M_r$. Take one such minimal shift $x:=x_i+\sum_{j} \pm2m_j\delta_{e_j}$ with $m_j\leq n'_j$ the degree of $M_r$ in the variable $d_j$. If $m_j<n_j$ for any $j\neq l$, then $\widetilde{c}+x\in E_0$, using the same reasoning as before. Therefore the only minimal shifts for which $\varphi_{\widetilde{c}+x}\in V_0$ are $$x_l:=x_i \pm 2m_l\delta_{e_l}-\sum_{j\neq l} 2n_j\delta_{e_j}$$
		for $-n_l\leq m_l\leq n_l$, which gives the desired result.

		Combining these two facts, we obtain that 		
		$$P_l\cdot \varphi_{\widetilde{c}}=R_{n_l}\varphi_{c+2n_l \delta_{e_l}}+R_{-n_l}\varphi_{c-2n_l \delta_{e_l}}+\sum_{j=-n_l+1}^{n_l-1}R_{j}\varphi_{c+2j\delta_{e_j}}+X$$
		
		where $R_{- n_l}$ is a non-zero Laurent polynomial evaluated in $A,A^{c_1},\dots,A^{c_n}$ as before, and $X\in E_0$.
		
	\end{proof}
	
	From here, the proof of the inductive case proceeds verbatim as in Case 1.
\end{proof}
\section{Proof of corollaries}
\label{sec:corollaries}
In this section, we prove the various corollaries mentioned in the introduction.

First, we tackle the proof of Corollary \ref{cor:explicit-genset}, that says that our proof provides an algorithmic way of constructing a generating of $Sk^{\overline{\lambda}}(M)$ for any $3$-manifold $M.$

\begin{proof}[Proof of Corollary \ref{cor:explicit-genset}]
	
	We need to argue that every step along the course of the proof of Theorem \ref{thm:main} was algorithmic.
	
	We start with the data of a Heegaard diagram of a closed oriented $3$-manifold.
	
	Over the course of the proof we use Lemma \ref{lem:linDep} repeatedly  to get vanishing polynomials for sets of $3g-2$ curves on the Heegaard surface.
	
	Lemma \ref{lem:linDep} can be made algorithmic: indeed, one can resolve all crossing in monomials $c_1^{n_1}\ldots c_{3g-2}^{n_{3g-2}}$ to express them in terms of multicurves on the surface, which form a basis of $Sk(\Sigma).$ The fact that monomials will eventually be linearly dependent will be apparent on their coefficients on the basis of multicurves, and we will be able to compute a vanishing polynomial.
	
	Fusing rules clearly provide an explicit way to turn such vanishing polynomials into recurrence relations among the basis vectors $\varphi_c$ of $Sk(H)$ in $Sk(M).$
	
	The other preliminary lemmas, Lemma \ref{lemma:non-vanishing_poly} and \ref{lemma:lin_alg} are based on linear algebras and clearly algorithmic. Lemma \ref{lemma:chamber} also provides an explicit way of extracting a finite set of generators in each chamber.
	
	Finally, in the main body of the proof of Theorem \ref{thm:main}, the choice of subspaces, of internal and external curve operators, etcetera, is always explicit.
\end{proof}

Our next proof deals with Corollary \ref{cor:special-values}, which we recall below:
\begin{corollary}[Corollary \ref{cor:special-values}]
	Let $M$ be a closed $3$-manifold. Then there exists an integer $n\geq 1$ and a polynomial $R\in \Z[A^{\pm 1}][X_1,\ldots, X_n],$ such that, for any $\zeta\in \C$ which is not a root of any non-zero polynomial of the form $R(A^{k_1},\ldots,A^{k_n})$ with $k_i \in \Z,$ we have
	$$\dim_{\C}Sk_{\zeta}(M) <+\infty.$$
\end{corollary}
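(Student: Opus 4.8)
The plan is to re-run the proof of Theorem \ref{thm:main} in the closed case, but with the generic parameter $A$ replaced throughout by the complex number $\zeta$, and to record the finitely many non-vanishing conditions on which the argument relies. Since (by Corollary \ref{cor:explicit-genset}) that proof is a finite, effective procedure, only finitely many such conditions occur, and the corollary will follow by packaging them.

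First I would isolate which steps are sensitive to the value of $A$. Lemma \ref{lem:dimPoly}, and hence Lemma \ref{lem:linDep}, hold over $\C$ at any $\zeta\in\C^*$: the bound $\dim_\C\mathcal{F}_{n}=O(n^{3g-3})$ only used that powers of $A$ and $-A^2-A^{-2}$ act invertibly, and the linear-dependence conclusion is then pure linear algebra in the finite-dimensional space $\mathcal{F}_{nB}$. The fusion-rule computations of Section \ref{sec:curvop} (Lemmas \ref{lemma:curveActions}, \ref{lemma:longcurveactions}, \ref{lemma:come-back}), the existence of the Jones--Wenzl idempotents $f_n$, and the basis $(\varphi_c)$ of Lemma \ref{lemma:basis} are all valid at $\zeta$ provided every quantum integer $\{m\}|_{A=\zeta}=\zeta^{2m}-\zeta^{-2m}$ with $m\geq 1$ is non-zero, which holds as soon as $\zeta$ is not a root of unity; indeed the non-vanishing assertions in those lemmas were deduced solely from $\{m\}\neq 0$. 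Finally, the combinatorial core of the proof --- the Chamber lemma \ref{lemma:chamber}, the linear-algebra Lemmas \ref{lemma:non-vanishing_poly} and \ref{lemma:lin_alg}, the polytope analysis of Section \ref{sec:polytope-boundary}, and the two inductive steps of Sections \ref{sec:induction1}--\ref{sec:induction2} --- does not see the coefficient ring, except at one point: each application of Lemma \ref{lemma:non-vanishing_poly} to a leading coefficient $R_{\pm d}$ of a recurrence relation requires $R_{\pm d}(\zeta,\zeta^{c_{i_1}},\dots,\zeta^{c_{i_k}})\neq0$ for all colours $c$ outside a finite union of codimension $\geq 1$ affine subspaces.

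Let $R_1,\dots,R_s\in\Z[A^{\pm1}][X_1,\dots,X_{3g-3}]$ be the finitely many polynomials arising in the previous sentence (the $R_{\pm d}$, with $X_i$ standing for $A^{c_i}$), and put
\[
R(A,X_1,\dots,X_{3g-3}):=(X_1-1)\cdot\prod_{j=1}^{s}R_j(A,X_1,\dots,X_{3g-3}),
\]
so $n=3g-3$. Suppose $\zeta$ is not a root of any non-zero Laurent polynomial of the form $R(A^{k_1},\dots,A^{k_{3g-3}})$. Taking $k_1$ to be (a large multiple of) the putative order and the remaining $k_i$ generic, so that $\prod_jR_j(A^{k_1},\dots,A^{k_{3g-3}})$ stays non-zero, shows that $\zeta$ is not a root of unity; hence everything in the previous paragraph except the last point applies over $\C$ at $A=\zeta$. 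For the last point, fix $j$ and $c$: the single-variable Laurent polynomial $R_j(A,A^{c_1},\dots,A^{c_{3g-3}})=\sum_\alpha p_\alpha(A)A^{\langle\alpha,c\rangle}$ is identically zero only when two of its monomials overlap in $A$-degree, which confines $c$ to a finite union of affine hyperplanes; off those hyperplanes, taking $k_i=c_i$ in the hypothesis gives $R_j(\zeta,\zeta^{c_1},\dots,\zeta^{c_{3g-3}})\neq0$. Thus the bad colours form a finite union of codimension $\geq 1$ subspaces, exactly as needed, and the proof of Theorem \ref{thm:main} reproduces verbatim over $\C$ at $A=\zeta$: $Sk_\zeta(M)$ is spanned over $\C$ by the images of the multicurves of $H$, hence (the $\varphi_c$ being defined at $\zeta$) by the images of the $\varphi_c$, $c\in\Delta$, and the recurrence relations furnished by Lemma \ref{lem:linDep} together with the fusion rules --- all valid in $Sk_\zeta(M)$ --- reduce this to a finite set, so $\dim_\C Sk_\zeta(M)<+\infty$. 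I expect the main difficulty to be exactly the bookkeeping of this last paragraph: checking that, throughout the long proof of Theorem \ref{thm:main}, every genericity hypothesis in the colours is governed by the finitely many $R_j$, and that after specialization the excluded colours still form a codimension $\geq 1$ set, so that the Chamber lemma and the inductive machinery of Section \ref{sec:finalproof} carry over without change.
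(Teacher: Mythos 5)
Your proposal is correct and follows essentially the same route as the paper: re-run the effective proof of Theorem \ref{thm:main} at $A=\zeta$, collect the finitely many leading-coefficient Laurent polynomials $R_j(A,A^{c_1},\dots,A^{c_n})$ whose non-vanishing the argument requires, and take $R$ to be their product. Your treatment is in fact more careful than the paper's two-line proof, notably in inserting the factor $(X_1-1)$ to rule out roots of unity (so that the Jones--Wenzl idempotents and the basis $(\varphi_c)$ survive specialization) and in checking that the excluded colours still form a codimension $\geq 1$ set after evaluating at $\zeta$.
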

\begin{proof}[Proof of Corollary \ref{cor:special-values}]
	Let $M$ be any $3$-manifold as in Theorem \ref{thm:main}. Perform the algorithm outlined by the proof of Theorem \ref{thm:main}; along the way, we used a finite number of recurrence relations and we required that their leading coefficients, Laurent polynomials of the form $R_i(A,A^{c_1},\dots,A^{c_n})$, did not vanish. Define $R$ as the product of all such $R_i$'s. Then if $\zeta$ is not a root of $R(A,A^{c_1},\dots,A^{c_n})$, we can repeat the proof verbatim for $Sk_\zeta^{\overline{\lambda}}$, with the guarantee that the leading coefficients of our recurrence relations do not vanish.
\end{proof}
As mentioned in the introduction, Corollary \ref{cor:special-values} implies that for any closed $3$-manifold $M,$ there are algebraic numbers $\zeta$ such that $\dim S_{\zeta}(M)<+\infty.$ Indeed, we have:
\begin{lemma}\label{lemma:special-roots} Let $n\geq 1$ and let $R\in \Z[A^{\pm 1}][X_1,\ldots,X_n],$ and let $\mathcal{F}\subset \overline{\Q}$ be the set of roots of all non-zero polynomials of the form $R(A,A^{c_1},\ldots,A^{c_n}).$ Then 
	\begin{itemize}
		\item[(1)] There exists $C>0,$ such that for any $\zeta$ with $|\zeta|>C$ then $\zeta\notin \mathcal{F}.$
		\item[(2)] There exists $N\geq 1,$ such that for any $\zeta \in \overline{\Q}$ with at least $N$ real conjugates, $\zeta\notin \mathcal{F}.$
	\end{itemize}
\end{lemma}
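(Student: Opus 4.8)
The plan is to reduce both statements to elementary properties of the univariate Laurent polynomials $P_c(A):=R(A,A^{c_1},\dots,A^{c_n})$, exploiting that their integer coefficients are controlled uniformly in $c$. Write $R=\sum_{\mathbf m,k}a_{\mathbf m,k}\,A^{k}\mathbf X^{\mathbf m}$ with $a_{\mathbf m,k}\in\Z$, only finitely many nonzero, and set $S:=\sum_{\mathbf m,k}|a_{\mathbf m,k}|$ and $T:=\#\{(\mathbf m,k)\ :\ a_{\mathbf m,k}\neq0\}$. If $R=0$ then $\mathcal F=\emptyset$ and there is nothing to prove, so assume $S,T\geq1$. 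For any $c=(c_1,\dots,c_n)\in\Z^{n}$ we have $P_c(A)=\sum_i\big(\sum_{k+\langle c,\mathbf m\rangle=i}a_{\mathbf m,k}\big)A^{i}$, so $P_c$ has integer coefficients, the sum of their absolute values is at most $S$, and $P_c$ has at most $T$ nonzero terms --- and all three facts are independent of $c$.

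For part $(1)$ I would take $C:=1+S$. Suppose $\zeta\in\mathcal F$ with $|\zeta|>C$; then $P_c(\zeta)=0$ for some $c$ with $P_c\neq0$, and we may write $P_c(A)=A^{e}Q(A)$ with $Q\in\Z[A]$ and $Q(0)\neq0$. Since $|\zeta|>1$ we have $\zeta\neq0$, so $Q(\zeta)=0$; writing $Q(A)=\sum_{i=0}^{d}q_iA^i$ with $q_d\neq0$, the leading coefficient $q_d$ is a nonzero integer, so $|q_d|\geq1$, while $\sum_i|q_i|\leq S$. From $q_d\zeta^{d}=-\sum_{i<d}q_i\zeta^{i}$ and $|\zeta|>1$ we get $|q_d|\,|\zeta|\leq\sum_{i<d}|q_i|\leq S$, hence $|\zeta|\leq S<C$, a contradiction. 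Thus no $\zeta$ with $|\zeta|>C$ lies in $\mathcal F$.

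For part $(2)$ the crux is the following uniform bound: every nonzero $P_c$ has at most $2T-1$ real roots. Indeed, writing $P_c=A^{e}Q(A)$ as above, $Q$ has at most $T$ nonzero terms, so by Descartes' rule of signs it has at most $T-1$ positive real roots; applying the same to $Q(-A)$ bounds the negative real roots by $T-1$, and $0$ accounts for at most one more. Now set $N:=2T$. If some $\zeta\in\overline{\Q}$ had at least $N$ real conjugates and lay in $\mathcal F$, say $P_c(\zeta)=0$ with $P_c\neq0$, then --- $P_c$ having integer coefficients --- every $\Q$-conjugate of $\zeta$ would again be a root of $P_c$, so the pairwise distinct real conjugates of $\zeta$ would furnish at least $2T$ distinct real roots of $P_c$, contradicting the bound. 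Hence $\zeta\notin\mathcal F$, and $N=2T$ works.

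I do not expect a serious obstacle here; the only points requiring a little care are the passage between $P_c$ and its polynomial part $Q$ (which alters neither the number of nonzero terms, nor the real roots away from $0$, nor the coefficient sums, so Cauchy's bound and Descartes' rule both apply to $Q$ verbatim), and the remark in part $(2)$ that the $\Q$-conjugates of $\zeta$ are by definition pairwise distinct, so ``at least $N$ real conjugates'' genuinely produces $N$ distinct real roots of $P_c$ (the case $\zeta=0$ being vacuous once $N\geq2$).
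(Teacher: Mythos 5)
Your proof is correct and follows essentially the same route as the paper: Cauchy's bound (using that the integer coefficients of $R(A,A^{c_1},\dots,A^{c_n})$ are uniformly bounded in $c$) for part (1), and the uniform bound on the number of real roots of a polynomial with a bounded number of nonzero terms for part (2). The only differences are cosmetic — you spell out Descartes' rule and the passage from the Laurent polynomial to its polynomial part, which the paper leaves implicit.
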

We note that condition (2) and its proof was suggested to us in a MathOverflow discussion \cite{MathOver}.
\begin{proof}
(1) Let $B\in \Z$ the sum of all coefficients of all monomials in $R.$ For any $c_1,\ldots,c_n\in \Z,$ the polynomial $R(A,A^{c_1},\ldots,A^{c_n})\in \Z[A^{\pm 1}]$ has all its coefficients bounded above by $B$ in absolute value. However, by Cauchy's bound, any root $\alpha$ of a polynomial $P\in \C[A^{\pm 1}]$ with dominant coefficient $c_d$ and other coefficients bounded by $M$ in moduli are such that $|\alpha|\leq 1+\frac{M}{c_0}.$ Therefore, we have that $|\alpha|\leq B+1$ for any $\alpha \in \mathcal{F}.$

(2) Let $n$ be the number of monomials in $R.$ Then for any $c_1,\ldots,c_n\in \Z,$ the polynomial $R(A,A^{c_1},\ldots,A^{c_n})\in \Z[A^{\pm 1}]$ has at most $n$ non-zero coefficients. However, a polynomial with $n$ non-zero coefficients has at most $2n-2$ real roots. Therefore, if $\zeta \in \overline{\Q}$ has at least $N=2n-1$ real conjugates, then $\zeta \notin \mathcal{F}.$
\end{proof}
Finally, we prove Corollary \ref{cor:peripheral-ideal}, which we recall here:
\begin{corollary}[Corollary \ref{cor:peripheral-ideal}]
	Let $M$ be a compact oriented $3$-manifold with boundary not a disjoint union of spheres, and let $x\in Sk(M,\Z[A^{\pm 1}]).$ Then there exists $z\in Sk(\partial M,\Z[A^{\pm 1}])$ such that $z\neq 0$ and $z\cdot x=0.$
\end{corollary}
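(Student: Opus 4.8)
The plan is to deduce the statement from Theorem \ref{thm:main} by a localization argument, together with one piece of input about the skein algebra of the boundary. First I would fix a pants decomposition $\overline{\lambda}=(\lambda_1,\dots,\lambda_n)$ of $\partial M$; as usual we may assume $\partial M$ has no sphere components, and since $\partial M$ is not a disjoint union of spheres it then has a component of positive genus, so $n\geq 1$. Write $R:=\Q(A)[\overline{\lambda}]\subseteq Sk(\partial M,\Q(A))$, which is a (commutative) polynomial domain, with fraction field $K:=\Q(A,\overline{\lambda})$, so that by definition $Sk^{\overline{\lambda}}(M)=Sk(M,\Q(A))\otimes_R K$; by Theorem \ref{thm:main} this is finite dimensional over $K$, of some dimension $d$. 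The geometric ingredient I would use is: there is a simple closed curve $\mu$ on $\partial M$ dual to $\lambda_1$ (so $I(\mu,\lambda_1)\geq 1$ and $\mu$ can be taken disjoint from $\lambda_2,\dots,\lambda_n$) whose powers $1,\mu,\mu^2,\dots$ are linearly independent over $R$ inside $Sk(\partial M,\Q(A))$ — equivalently, $Sk(\partial M,\Q(A))\otimes_R K$ is infinite dimensional over $K$. This holds because each $\mu^{j}$ is the multicurve of $j$ parallel copies of $\mu$, and, after resolving all crossings, the elements $(\prod_i\lambda_i^{a_i})\,\mu^{j}$ have pairwise distinct leading terms for the filtration of $Sk(\partial M,\Q(A))$ by geometric intersection number with $\lambda_1$ (alternatively, specialize to $A=-1$: then $Sk(\partial M,\C)$ is the coordinate ring of the $SL_2(\C)$-character variety of $\partial M$, on which the trace of $\mu$ is transcendental over the subfield generated by the traces of the $\lambda_i$, by Fenchel--Nielsen theory).

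Granting this, let $x\in Sk(M,\Z[A^{\pm 1}])$, let $\bar{x}$ be its image in $Sk(M,\Q(A))$, and for $j\geq 0$ let $\xi_j\in Sk^{\overline{\lambda}}(M)$ be the image of $\mu^{j}\cdot\bar{x}$, where $\mu^{j}\in Sk(\partial M,\Q(A))$ acts on $Sk(M,\Q(A))$ through the natural module structure. The $d+1$ vectors $\xi_0,\dots,\xi_d$ of the $d$-dimensional $K$-vector space $Sk^{\overline{\lambda}}(M)$ must be $K$-linearly dependent, so there are $a_0,\dots,a_d\in K$, not all zero, with $\sum_{j=0}^{d}a_j\xi_j=0$. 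Writing $a_j=p_j/q$ with $p_j\in R$ and $q\in R\setminus\{0\}$, and using that an element of $Sk(M,\Q(A))$ maps to $0$ in $Sk(M,\Q(A))\otimes_R K$ exactly when it is $R$-torsion, I obtain $s\in R\setminus\{0\}$ with $\sum_{j=0}^{d}(sp_j)\cdot(\mu^{j}\cdot\bar{x})=0$ in $Sk(M,\Q(A))$. Since $R$ acts through the $Sk(\partial M,\Q(A))$-module structure and this action is associative, this reads $z_1\cdot\bar{x}=0$ where $z_1:=\sum_{j=0}^{d}(sp_j)\,\mu^{j}\in Sk(\partial M,\Q(A))$. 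The coefficients $sp_j\in R$ are not all zero (as $R$ is a domain and the $p_j$ are not all zero), so by the $R$-linear independence of the $\mu^{j}$ we get $z_1\neq 0$.

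It then remains to descend to $\Z[A^{\pm 1}]$ coefficients, using that $Sk(\partial M,\Z[A^{\pm 1}])$ and $Sk(M,\Z[A^{\pm 1}])$ are free, hence torsion free, over $\Z[A^{\pm 1}]$, and that $Sk(\,\cdot\,,\Q(A))=Sk(\,\cdot\,,\Z[A^{\pm 1}])\otimes_{\Z[A^{\pm 1}]}\Q(A)$. Clearing the $\Q(A)$-denominators in the multicurve expansion of $z_1$ produces $r(A)\in\Z[A^{\pm 1}]\setminus\{0\}$ with $z_2:=r(A)z_1\in Sk(\partial M,\Z[A^{\pm 1}])\setminus\{0\}$ and $z_2\cdot\bar{x}=0$ in $Sk(M,\Q(A))$. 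Then $z_2\cdot x\in Sk(M,\Z[A^{\pm 1}])$ lies in the kernel of $Sk(M,\Z[A^{\pm 1}])\to Sk(M,\Q(A))$, hence is $\Z[A^{\pm 1}]$-torsion, so there is $q(A)\in\Z[A^{\pm 1}]\setminus\{0\}$ with $q(A)(z_2\cdot x)=0$ in $Sk(M,\Z[A^{\pm 1}])$. Setting $z:=q(A)z_2\in Sk(\partial M,\Z[A^{\pm 1}])$, torsion freeness gives $z\neq 0$, while $z\cdot x=q(A)(z_2\cdot x)=0$, as desired. Applying this to $M=S^{3}\setminus L$ and $x=\emptyset$ yields the non-emptiness of the peripheral ideal.

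The step I expect to carry the real content, beyond routine bookkeeping with the four rings $\Z[A^{\pm 1}]\subset\Q(A)$ and $R\subset K$, is the geometric lemma that $Sk(\partial M,\Q(A))\otimes_R K$ is infinite dimensional over $K$: this infinitude, confronted with the finite dimensionality of $Sk^{\overline{\lambda}}(M)$ supplied by Theorem \ref{thm:main}, is exactly what forces a nontrivial relation killing $\bar{x}$.
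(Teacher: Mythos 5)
Your proposal is correct and follows essentially the same route as the paper: finite dimensionality of $Sk^{\overline{\lambda}}(M)$ (Theorem \ref{thm:main}) forces a $\Q(A,\overline{\lambda})$-linear dependence among the $\mu^j\cdot x$, the torsion characterization of the localization kernel lifts it to $Sk(M)$, and non-vanishing of the resulting ordered polynomial in $\overline{\lambda}$ and a dual curve is exactly the paper's Lemma \ref{lemma:ordered-polynomial}. The only loose point is your one-line justification of that independence — as stated, the filtration by $I(\cdot,\lambda_1)$ alone does not separate monomials with the same power of $\mu$, so you need the full leading-multicurve (Dehn--Thurston) argument or, as the paper does, a reduction to the one-holed torus / four-holed sphere via the presentations in \cite{BP00}.
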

We start by the following lemma. For $x_1,\ldots,x_k$ elements of a non-commutative algebra $\mathcal{A}$, an ordered polynomial will be a linear combination of ordered monomials $x_1^{n_1}\ldots x_k^{n_k}.$
\begin{lemma}
	\label{lemma:ordered-polynomial} Let $\Sigma$ be a surface, and $\lambda_1,\ldots,\lambda_k$ and $\delta$ be simple closed curves on $\Sigma,$ with $\lambda_1,\ldots, \lambda_k$ pairwise disjoint, $\delta$ disjoint from all curves $\lambda_i$ except $\lambda_1,$ and $\delta$ and $\lambda_1$ intersecting either once or twice with opposite orientations.
	
	Then, for any ordered polynomial $P$ with non-zero coefficients in $\lambda_1,\ldots,\lambda_k,\delta,$ one has that $P\neq 0 \in Sk(\Sigma,\Z[A^{\pm 1}]).$
\end{lemma}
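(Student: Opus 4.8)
The plan is to prove the stronger statement that the ordered monomials $\lambda_1^{n_1}\cdots\lambda_k^{n_k}\delta^m$, for distinct tuples $(n_1,\dots,n_k,m)$, are linearly independent in $Sk(\Sigma,\Z[A^{\pm 1}])$. Since $Sk(\Sigma,\Z[A^{\pm 1}])$ is a free $\Z[A^{\pm 1}]$-module on the set of multicurves, it embeds into $Sk(\Sigma,\Q(A))$, and moreover the rank of the coefficient matrix of a finite family of elements, with entries in $\Z[A^{\pm 1}]$, cannot drop under the specialization $A\mapsto -1$ (a nonvanishing minor at $A=-1$ is a nonzero Laurent polynomial). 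Hence it suffices to prove linear independence of the images of these monomials in $Sk(\Sigma,\Z[A^{\pm 1}])\otimes_{A=-1}\C$. The next step is to apply the Bullock--Przytycki--Sikora isomorphism of this algebra with $\C[X(\Sigma)]$, where $X(\Sigma)$ is the $\slC$-character variety of $\pi_1(\Sigma)$, under which $[\gamma]$ is sent to (minus) the trace function $\tr_\gamma$; then $\lambda_1^{n_1}\cdots\lambda_k^{n_k}\delta^m$ is sent to $\pm\,\tr_{\lambda_1}^{n_1}\cdots\tr_{\lambda_k}^{n_k}\tr_{\delta}^{m}$, so the whole statement reduces to the algebraic independence of $\tr_{\lambda_1},\dots,\tr_{\lambda_k},\tr_{\delta}$ on $X(\Sigma)$.

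For this it is enough to check that the morphism $\tau=(\tr_{\lambda_1},\dots,\tr_{\lambda_k},\tr_{\delta})\colon X(\Sigma)\to\C^{k+1}$ is dominant, since then no nonzero polynomial, in particular no nontrivial linear combination of the monomials above, can vanish on its image. To see dominance, I would cut $\Sigma$ along the disjoint curves $\lambda_2,\dots,\lambda_k$; by hypothesis $\lambda_1$ and $\delta$ lie in a single component $S$ of the result, and $I(\delta,\lambda_1)>0$, so $\lambda_1$ and $\delta$ have no common power. When $I(\delta,\lambda_1)=1$ they form a free basis of the fundamental group of a once-holed-torus subsurface of $S$; when $I(\delta,\lambda_1)=2$ with opposite orientations, a regular neighbourhood of $\lambda_1\cup\delta$ in $S$ is a four-holed sphere, on which $\lambda_1,\delta$ may be taken to be the curves $ab$ and $bc$ in a free basis $a,b,c$ of its fundamental group. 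In either case $(\tr_{\lambda_1},\tr_{\delta})$ is visibly dominant on the character variety of the subsurface (for the four-holed sphere, $(\tr(ab),\tr(bc))$ is already dominant on $\slC\times\slC\times\slC$). Extending such a representation of $\pi_1(S)$ to $\pi_1(\Sigma)$ while prescribing arbitrary traces for $\lambda_2,\dots,\lambda_k$ on the remaining pieces — possible because each $\lambda_i$ is a boundary curve of those pieces — then shows $\tau$ is dominant.

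The part requiring the most care is this last dominance argument: matching the chosen representations along the shared boundary curves $\lambda_2,\dots,\lambda_k$, and dealing with the low-complexity cases (for instance $\Sigma$ a torus, which forces $k=1$, where $X(\Sigma)$ is small) — all of this is standard but must be spelled out, and it is the only place where the precise hypothesis on $I(\delta,\lambda_1)$ is used. As an alternative route that stays inside skein theory, one can observe that $\lambda_2,\dots,\lambda_k$ commute with $\lambda_1$ and $\delta$ in $Sk(\Sigma)$ and peel off as a fixed multicurve supported away from a regular neighbourhood $R$ of $\lambda_1\cup\delta$, reducing the claim to linear independence of $\lambda_1^{n_1}\delta^m$ in $Sk(R)$ with $R$ a once-holed torus or four-holed sphere. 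There one resolves the crossings between the $n_1$ parallel copies of $\lambda_1$ and the $m$ parallel copies of $\delta$ and isolates the unique maximal resolution — the multicurve of slope $(n_1,m)$ in the once-holed-torus case — which is nonzero, determines $(n_1,m)$, and is the leading term with respect to the filtration of $Sk(R)$ by geometric intersection with $\lambda_1$ refined by geometric intersection with $\delta$; linear independence then follows by comparing leading terms, and the hypothesis on $I(\delta,\lambda_1)$ is used only to keep this bookkeeping elementary.
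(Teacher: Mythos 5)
Your proposal is correct, and your primary route is genuinely different from the one in the paper. The paper's proof is a two-line reduction: since $\lambda_2,\dots,\lambda_k$ commute with everything involved, one reduces to $k=1$, views $\lambda_1$ and $\delta$ as curves on a one-holed torus or a four-holed sphere, and invokes the explicit presentations of the skein algebras of those two surfaces from \cite{BP00} -- i.e.\ exactly the ``alternative route that stays inside skein theory'' you sketch at the end, except that the paper outsources the leading-term bookkeeping to a citation. Your main argument instead specializes at $A=-1$, uses freeness of $Sk(\Sigma,\Z[A^{\pm1}])$ on multicurves so that full rank of the specialized coefficient matrix lifts to linear independence over $\Z[A^{\pm1}]$, passes through the Bullock homomorphism to $\C[X(\Sigma)]$, and reduces everything to algebraic independence of the trace functions, proved by exhibiting enough representations. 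This buys conceptual clarity and robustness (you only need the homomorphism $\gamma\mapsto -\tr_\gamma$ to exist, not the reducedness of $Sk_{-1}(\Sigma)$, and the method adapts to other curve configurations), at the cost of the cut-and-reglue dominance argument you rightly flag as the delicate step; the paper's route is shorter and works directly at generic $A$ but depends on having the presentations of $Sk$ of the two model subsurfaces in hand. One remark applying equally to both proofs: the statement implicitly assumes the $\lambda_i$ are essential and pairwise non-isotopic (as they are in the application, being part of a pants decomposition); otherwise $\tr_{\lambda_i}\equiv 2$ or $\lambda_i=\lambda_j$ in $Sk(\Sigma)$ and the ordered monomials cannot be independent.
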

\begin{proof}
	Because $\lambda_2,\ldots,\lambda_k$ commute with all other curves involved, it is sufficient to consider the case $k=1,$ in which case $\delta$ and $\lambda_1$ can be viewed as curves on a one-holed torus or a $4$-holed sphere, and the claim is clear, for instance from the presentation of the skein algebras of those surfaces given in \cite{BP00}.
\end{proof}

We are now ready for the proof of Corollary \ref{cor:peripheral-ideal}:

\begin{proof}[Proof of Corollary \ref{cor:peripheral-ideal}]
	Take $\overline{\lambda}$ to be a pants decomposition of $\partial M,$ and let $\delta$ be a non-trivial simple closed curve on $\partial M$ such that $(\overline{\lambda},\delta)$ satisfies the hypothesis of Lemma \ref{lemma:ordered-polynomial}.
	Then the elements $\delta^n \cdot x$ are linearly dependent in $Sk^{\overline{\lambda}}(M),$ thus
	$$\underset{i_1,\ldots i_{3g-3}\geq 0, n\geq 0}{\sum} c_{i,n} \lambda_1^{i_1}\ldots \lambda_{3g-3}^{i_{3g-3}} \delta^n \cdot x =0 \in Sk^{\overline{\lambda}}(M)$$
	for some coefficients $c_{i,n}\in \Z[A^{\pm 1}]$ that are not all zero. However, an element of $Sk(M)$ is zero in $Sk^{\overline{\lambda}}(M)$ if and only if it is $P(A,\lambda_1,\ldots,\lambda_n)$-torsion for some polynomial $P.$ Therefore, up to multiplying by $P,$ we have the same equality in $Sk(M,\Z[A^{\pm 1}]).$ However, any non-zero (ordered) polynomial in $\lambda_1,\ldots,\lambda_n,\delta$ is non-zero in $Sk(\partial M,\Z[A^{\pm 1}])$ by Lemma \ref{lemma:ordered-polynomial}.
\end{proof}
\bibliographystyle{hamsalpha}
\bibliography{biblio}

\end{document}